\documentclass[12pt,a4paper]{amsart}
\usepackage{graphicx,amssymb,amsfonts,epsfig,amsthm,a4,amsmath,url,graphicx,enumerate,verbatim}
\usepackage[latin1]{inputenc}
\DeclareGraphicsRule{*}{mps}{*}{}

\usepackage[usenames,dvipsnames]{color}






\newtheorem*{thmm}{Theorem}
\newtheorem*{propp}{Proposition}

\newtheorem{ithm}{Theorem}

\newtheorem{icor}[ithm]{Corollary}

\newtheorem{thm}{Theorem}[section]
\newtheorem{cor}[thm]{Corollary}
\newtheorem{lem}[thm]{Lemma}

\newtheorem{prop}[thm]{Proposition}
\theoremstyle{definition}
\newtheorem{defn}[thm]{Definition}

\newtheorem{que}[thm]{Question}

\theoremstyle{remark}
\newtheorem{rem}[thm]{Remark}

\numberwithin{equation}{section}

\newcommand{\Z}{\mathbf{Z}}

\newcommand{\N}{\mathbf{N}}
\newcommand{\F}{\mathbf{F}}

\newcommand{\R}{\mathbf{R}}
\newcommand{\C}{\mathcal{C}}

\newcommand{\Isom}{\text{Isom}}

\newcommand{\PSL}{\text{PSL}}

\newcommand{\eps}{\varepsilon}

\newcommand{\SL}{\textnormal{SL}}

\newcommand{\Fill}{\textnormal{Fill}}

\newcommand{\germ}{\textrm{germ}}

\makeatletter
\let\@wraptoccontribs\wraptoccontribs
\makeatother

\begin{document}

\title{Characterizing a vertex-transitive graph by a large ball}

\date{}
\author[De La Salle]{Mikael De La Salle} \address{CNRS, UMPA, ENS-Lyon\\ Lyon\\FRANCE}\email{mikael.de.la.salle@ens-lyon.fr}

\author[Tessera]{Romain Tessera} \address{Laboratoire de Math\'ematiques, Universit\'e Paris-Sud 11\\Orsay\\FRANCE}\email{romtessera@gmail.com}

\contrib[With an appendix by]{Jean-Claude Sikorav}
\address{UMPA, ENS-Lyon\\ Lyon\\FRANCE}
\email{jean-claude.sikorav@ens-lyon.fr}

\begin{abstract}It is well-known that a complete Riemannian manifold $M$ which is locally isometric to a symmetric space is covered by a symmetric space. Here we prove that a discrete version of this property  (called {\it local to global rigidity}) holds for a large class of vertex-transitive graphs, including Cayley graphs of torsion-free lattices in simple Lie groups, and Cayley graphs of torsion-free virtually nilpotent groups. By contrast, we exhibit various examples of Cayley graphs of finitely presented groups (e.g.\ $\SL_4(\Z)$) which fail to have this property, answering a question of Benjamini and Georgakopoulos. 


Answering a question of Cornulier, we also construct a continuum of pairwise non-isometric {\it large-scale simply connected} locally finite vertex-transitive graphs. This question was motivated by the fact that large-scale simply connected Cayley graphs are precisely Cayley graphs of finitely presented groups and therefore have countably many isometric classes.  \end{abstract}
\maketitle


\section{Introduction}


To illustrate the theme of this article, we start with a fact that is no doubt an easy exercise for experts in Riemannian geometry. 
\begin{propp}
Let $M$ be a homogeneous simply connected Riemannian manifold, and let $\eps>0$. Every simply connected Riemannian manifold $N$ whose balls of radius $\eps$ are isometric to the ball of radius $\eps$ in $M$ is isometric to $M$. 
\end{propp}
\begin{proof}[Sketch of proof.]
Observe that the condition on $N$ forces it to be complete, so by the main result of \cite{AS}, $N$ must be homogeneous. Let $x\in M$ and $y\in N$ and write $M=G/K$ and $N=H/L$, where $G$ and $H$ are respectively the connected components of the isometry groups of $M$ and $N$ that contain the identity, and where $K<G$ and $L<H$ are the stabilizers of $x$ and $y$ respectively (which are connected as well). The conditions on $M$ and $N$ imply that there exists an isomorphism from the Lie algebra of $G$ to the Lie algebra of $H$ that maps the Lie algebra of $K$ to the Lie algebra of $L$. Since $M$ and $N$ are simply connected, this implies that they are isometric. 
\end{proof}
This proposition says that homogeneous simply connected Riemannian manifolds have a kind of local-to-global rigidity property. Our goal is to investigate a ``large-scale" version of this property for vertex-transitive graphs that was introduced by Benjamini and Georgakopoulos. 

Throughout this paper, we equip every connected graph $X$ with its usual geodesic metric that assigns length $1$ to each edge. For brevity, we  adopt the following convention: ``a graph" means a connected, locally finite, graph without
 multiple edges and loops, and ``$x\in X$", means that $x$ is a vertex of $X$. A graph $X$ is entirely determined by the restriction of the distance to the vertex set, because there are no loops or multiple edges.  In particular the isomorphism group of the graph $X$ coincides with the isometry group of the vertex set of $X$. When $G$ is a group with a finite symmetric generating set $S$ and associated word-length $|\cdot|_S$, the Cayley graph of $G$ with respect to $S$, denoted $(G,S)$, is the graph whose vertex set is $G$ with distance $d(g,h) = |g^{-1}h|_S$. Another convention of ours is that the term ``ball'' will always stand for ``closed ball''.

\ 

Observe that  given an integer $d\geq 2$, any $d$-regular graph $X$ is covered by the $d$-regular (infinite) tree $T_d$. 
This trivial observation is a ``baby case" of the phenomenon studied in this paper.

Extending\footnote{Benjamini and Georgakopoulos
defined this only for vertex-transitive graphs.} the terminology of \cite{BsF,G}, given a graph $X$, we say that $Y$ is $R$-locally $X$ if for every vertex $y\in Y$ there exists $x\in X$ such that the ball $B_X(x,R)$, and $B_Y(y,R)$, equipped with their intrinsic geodesic metrics, are isometric. We now introduce the central notion studied in this paper. 

\begin{defn}[Local-Global Rigidity]
 Let $X$ be a graph. 
\begin{itemize}

\item{\bf (LG-rigidity)} Let $R>0$. $X$ is called {\it local to global rigid} (for short LG-rigid) {\it at scale $R$}, if every graph which is $R$-locally $X$, is covered by $X$. 

\item{\bf (SLG-rigidity)}  Let $0<r\leq R$. $X$ is called {\it strongly local to global rigid}  (SLG-rigid) {\it at scales $(r,R)$}, for some $0<r\leq R$, if  the following holds. For every graph $Y$ which is $R$-locally $X$,  and every isometry $f$ from a ball $B(x,R)$ in $X$ to a ball $B(y,R)$ in $Y$, the restriction of $f$ to $B(x,r)$ extends  to a covering from $X$ to $Y$.

 \item{\bf (USLG-rigidity)}  If in addition to the previous condition, the covering extending the partial isometry is unique, then we call $X$ {\it uniquely strongly local to global rigid} (USLG-rigid) {\it at scales $(r,R)$}.
  \end{itemize}
If there exists $R$ such that $X$ is LG-rigid at scale $R$, then we simply call $X$ LG-rigid. Similarly if for all  large enough $r$ there exists $R$ such that $X$ is SLG-rigid (resp.\ USLG-rigid) at scales $(r,R)$, then $X$ is called SLG-rigid (resp.\ USLG-rigid).  
\end{defn}

An easy compactness argument (see Proposition \ref{prop:LG<=>SLG}) shows that a graph with co-compact isometry group is SLG-rigid if (and only if) it is LG-rigid. 
It is an easy fact that if a Cayley graph is LG-rigid, then the group is finitely presented. In \cite{BsF,G}, Benjamini and Georgakopoulos asked about the converse.

\begin{que}\label{que:Cayley}
Are Cayley graphs of finitely presented groups LG-rigid? 
\end{que}

Our aim in this paper is to study this question and some generalizations. Before entering into more details, we give two simple-to-state consequences of our main results. In the positive direction we prove (see Corollary \ref{cor:virtuallyfree} and Corollary \ref{thm:lattices}, and Section \ref{section:USLG} for the terminology)
\begin{ithm}\label{thm:main_positive} Every Cayley graph of a group in one of the following families is LG-rigid:
  \begin{itemize}
  \item torsion-free irreducible lattices in real connected semisimple Lie groups with finite center.  
  \item torsion-free finitely generated groups with polynomial growth.
  \end{itemize}
\end{ithm}

In full generality, Question \ref{que:Cayley} has a negative answer.
\begin{ithm}\label{thm:main_negative} $\F_2 \times \F_2 \times \Z/2\Z$ and $\SL_4(\Z)$ have Cayley graphs which are not LG-rigid.
\end{ithm}
Notice $\SL_4(\Z)$ is a lattice in the simple Lie group $\SL_4(\R)$. This example therefore shows that the torsion-free hypothesis in the first half of Theorem \ref{thm:main_positive} is needed. We do not know whether the torsion-free hypothesis can be removed for groups with polynomial growth.
As we shall see in \S \ref{secIntroFlex}, we actually prove a stronger statement for $\F_2 \times \F_2 \times \Z/2\Z$, namely that it is not LG-rigid among vertex-transitive graphs.

We also prove (Theorem \ref{thm:discrete}) that every finitely presented group with an element of infinite order has a Cayley graph which is LG-rigid. It follows that LG-rigidity for a Cayley graph depends on the generating set. In particular LG-rigidity is not invariant under quasi-isometries.

In \cite{dlST} we see that the example of the building of $\mathrm{SL}(n,\F_p((T)))$ gives a torsion-free counterexample to question \ref{que:Cayley} for $n \geq 3$.

\subsection{Rigidity results}
Our first remark can now be reformulated as follows: $T_d$ is LG-rigid at scale $r$ for all $r>0$.
  Let us start with the following generalization. 
\begin{ithm}\label{thm:QT}
Let $X$ be a graph whose group of isometries acts cocompactly. If $X$ is quasi-isometric to a tree, then $X$ is LG-rigid.
\end{ithm}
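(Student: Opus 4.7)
The plan is to combine Manning's bottleneck characterization of quasi-trees with the cocompactness hypothesis to reduce the theorem to a standard ``develop along paths'' argument, in the spirit of the covering map $T_d \to X$ for $d$-regular graphs noted in the introduction.

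\textbf{Step 1 (Uniform control on loops).} I would first show that there is a constant $D = D(X)$ such that every combinatorial loop in $X$ can be written as a concatenation of conjugates of loops of diameter at most $D$. By Manning's bottleneck criterion for quasi-trees, there exists $\Delta \ge 0$ such that every path joining vertices $a,b$ passes within distance $\Delta$ of any midpoint of a geodesic $[a,b]$. Combined with cocompactness of $\mathrm{Isom}(X)$ and local finiteness, this rules out embedded cycles of large diameter (two antipodal vertices on such a cycle would be joined by two disjoint arcs, contradicting the bottleneck property), and yields a Dehn-type filling bound on all loops.

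\textbf{Step 2 (Extension along paths).} Fix $r \ge 3D + \Delta$ and $R = r + C$ for a suitable constant $C = C(X)$. Let $Y$ be $R$-locally $X$ and let $\phi_0 : B_X(x,R) \to B_Y(y,R)$ be an isometry. For each vertex $z \in X$, choose a path $x = z_0, z_1, \ldots, z_n = z$ and propagate $\phi_0$ along it: having defined an isometry on $B(z_i,R)$, the $R$-local hypothesis on $Y$ produces an isometry from $B(z_{i+1},R)$ into $Y$, and the overlap with $B(z_i,R-1)$ forces the two isometries to agree (by cocompactness, large balls in $X$ are determined by slightly smaller sub-balls up to a finite ambiguity that is killed by the fixed overlap).

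\textbf{Step 3 (Path-independence and conclusion).} The key check is that $\phi(z)$ does not depend on the chosen path. By Step 1, any two paths from $x$ to $z$ differ by a product of short loops of diameter at most $D$, and since $R > 2D$ each such loop lies entirely inside one of the balls on which $\phi$ has already been shown to be an honest isometry; the extension is therefore single-valued. The resulting map $\phi : X \to Y$ is a local isometry of simplicial graphs, hence a covering, and by construction it extends the restriction of $\phi_0$ to $B(x,r)$.

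\textbf{Main obstacle.} The crux is Step 1: upgrading the bottleneck property to a uniform bound on the diameters of generating loops. This is where cocompactness is essential --- without it, one could imagine quasi-trees with arbitrarily large local cycles accumulating at infinity, in which case no single scale $R$ would suffice. Once Step 1 is available, Steps 2 and 3 are a fairly standard develop-along-paths argument, and Example $T_d$ (where $D=0$ and everything is trivial) is recovered as a degenerate case.
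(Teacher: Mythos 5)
Your Steps 2--3 follow the ``develop along paths and check path-independence'' strategy, which is the strategy the paper uses for Theorem \ref{thm:mainIntro} --- but that theorem carries the extra hypothesis that $\Isom(X)$ is discrete (finite vertex stabilizers), and that hypothesis is exactly what your argument is silently using. The claim in Step 2 that ``the overlap with $B(z_i,R-1)$ forces the two isometries to agree'' is false for quasi-trees in general: already for $X=Y=T_d$ with $d\geq 3$, an isometry $B(z_{i+1},R)\to Y$ is \emph{not} determined by its restriction to $B(z_{i+1},R)\cap B(z_i,R)$ (one can permute the $d-1$ branches hanging off each vertex of the sphere of radius $R-1$ around $z_{i+1}$ on the side away from $z_i$). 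So the propagated germ at $z_{i+1}$ is not a function of the germ at $z_i$: it depends on which extension you pick, and the ambiguity is not ``killed by the fixed overlap'' --- it lives precisely in the part of $B(z_{i+1},R)$ that you need for the next step. Once the elementary propagation step is non-canonical, ``path-independence'' in Step 3 is not even well-posed: two runs along the \emph{same} path can give different values at the endpoint, so showing that homotopic paths give the same answer (via your Step 1, which is a reasonable and essentially correct observation) does not produce a well-defined map $\phi$. This is not a presentational issue: the paper explicitly notes that Theorem \ref{thm:QT} does not follow from Theorem \ref{thm:mainIntro} precisely because tree-like graphs can have infinite vertex stabilizers, and indeed $T_d$ is SLG-rigid but not USLG-rigid, so no canonical propagation can exist.

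The paper's proof avoids path-independence altogether. Lemma \ref{lem:asymp_dim_1} decomposes $X$ into uniformly bounded pieces $O_u$ indexed by the vertices of a tree $T$, with $O_u\cap O_v\neq\emptyset$ only when $(u,v)$ is an edge of $T$; the partial isometries $\phi_u$ are then built by induction over an exhaustion of $T$ by finite subtrees, adding one vertex at a time. Because the nerve of the covering is a tree, each new $\phi_v$ only has to be made compatible with a \emph{single} previously constructed $\phi_u$ (which Lemma \ref{lem:cocompact} provides), so the non-uniqueness of the extensions is harmless and no consistency around loops ever has to be checked. To repair your argument you must either add the discreteness hypothesis (in which case you are proving Theorem \ref{thm:mainIntro}, not Theorem \ref{thm:QT}), or replace the loop-filling statement of your Step 1 by a statement that organizes the choices along a tree, which is exactly what Lemma \ref{lem:asymp_dim_1} does.
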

In particular we deduce the following
\begin{icor}\label{cor:virtuallyfree}
Cayley graphs of virtually free finitely generated groups are LG-rigid.
\end{icor}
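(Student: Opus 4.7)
The plan is to deduce the corollary as a direct application of Theorem A, with the only non-trivial input being the well-known characterization of virtually free groups by the quasi-isometric type of their Cayley graphs.

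First I would recall that, for any finitely generated group $G$ with a finite symmetric generating set $S$, the Cayley graph $(G,S)$ carries the left regular action of $G$ by graph automorphisms, hence by isometries. Since this action is simply transitive on vertices, it is in particular cocompact on $(G,S)$. Thus the hypothesis of Theorem~A about cocompact isometry groups is automatic for all Cayley graphs of finitely generated groups.

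Second, I would invoke the classical fact, due essentially to Stallings' theorem on ends of groups together with Dunwoody's accessibility for (virtually) free groups (or, equivalently, Bass--Serre theory applied to the structure of virtually free groups as fundamental groups of finite graphs of finite groups), that a finitely generated group $G$ is virtually free if and only if some (equivalently, every) Cayley graph of $G$ is quasi-isometric to a tree, i.e.\ is a quasi-tree. Granting this, any Cayley graph of a virtually free finitely generated group satisfies the two hypotheses of Theorem~A: it is a quasi-tree, and its isometry group acts cocompactly.

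Putting these two ingredients together, Theorem~A applies verbatim and yields SLG-rigidity, which is the statement of the corollary. There is no real obstacle: the only step requiring care is citing the virtually-free/quasi-tree equivalence in a form that applies to arbitrary finitely generated (not merely finitely presented) groups, for which one can refer directly to Stallings--Dunwoody or, in more modern language, to the theorem that a group is virtually free iff it is quasi-isometric to a tree.
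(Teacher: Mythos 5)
Your proposal is correct and matches the paper's (implicit) argument: the paper simply states the corollary as an immediate consequence of Theorem~A, relying exactly on the two facts you supply, namely that a Cayley graph of a virtually free group is a quasi-tree and that the left regular action makes the isometry group cocompact. Nothing more is needed.
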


Given a graph $X$, and some  $k\in \N$, we define a 2-dimensional CW-complex $P_k(X)$ whose 1-skeleton is $X$, and whose  $2$-cells are  $m$-gons for $0\leq m\leq k$, defined by simple loops $(x_0,\ldots,x_m=x_0)$ of length $m$ in $X$, up to cyclic permutations. As a topological object, every $2$-cell is a disc attached along its boundary to a simple loop, so that the intersection of $2$ different $2$-cells belongs to the $1$-skeleton. 

\begin{defn}
Let us say that a graph $X$ is simply connected at scale $k$  (for short, $k$-simply connected) if $P_k(X)$ is simply connected. If there exists such a $k$, then we shall say that $X$ is large-scale simply connected. \end{defn}

Note that $k$-simple connectedness automatically implies $k'$-simple connectedness for any $k'\geq k$. 

Equivalently, a graph $X$ is simply connected at scale $k$ if the only graph coverings $f \colon Y \to X$ which are injective on the balls of radius $\frac k 2$ are the isometries (see Proposition \ref{prop:equivalence_of_lssc} for a proof). In particular.
\begin{prop}\label{prop:rigid implies sc}
If a vertex-transitive graph is LG-rigid at scale $R\in \N$, then it is simply connected at scale $2R$. 
\end{prop}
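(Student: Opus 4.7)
I will prove the contrapositive: assume $X$ is vertex-transitive and not $2R$-simply connected, and construct a graph $Y$ that is $R$-locally $X$ but not covered by $X$. The candidate is $Y := \widetilde{P_{2R}(X)}^{(1)}$, the $1$-skeleton of the universal cover of the polygonal complex $P_{2R}(X)$.

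First I check that $Y$ is $R$-locally $X$. The universal covering map restricts to a graph covering $\pi \colon Y \to X$, and the crucial observation is that every closed walk in $X$ of length $\leq 2R$ is, as an element of $\pi_1(X)$, a product of conjugates of simple loops of length $\leq 2R$ (proved by iteratively peeling off the first simple sub-loop of any closed walk). Hence every such short loop lies in $N := \ker(\pi_1(X) \to \pi_1(P_{2R}(X)))$ and lifts to a loop in $Y$; a standard argument then shows that $\pi$ restricts to a graph isomorphism on every ball of radius $R$. Along the way, the same correspondence identifies $P_{2R}(Y)$ with $\widetilde{P_{2R}(X)}$, so $Y$ is $2R$-simply connected.

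Now suppose, toward a contradiction, that a graph covering $p \colon X \to Y$ exists. For any $v \in X$, the restriction $p|_{B_X(v,R)}$ is a graph morphism surjecting onto $B_Y(p(v),R)$ (coverings surject on metric balls of any radius). Vertex-transitivity of $X$ makes $|B_X(v,R)|$ a constant $b$; since $Y$ is $R$-locally $X$, also $|B_Y(p(v),R)| = b$. A surjection between finite sets of cardinality $b$ is a bijection, so $p$ is bijective on the vertex set of every $R$-ball, and in particular its injectivity diameter strictly exceeds $2R$. Composed with $\pi$ --- which has the same property by construction --- the self-covering $\pi \circ p \colon X \to X$ has injectivity diameter greater than $2R$, so every closed walk of length $\leq 2R$ in $X$ lifts to a loop under $\pi \circ p$. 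Passing to fundamental groups, the subgroup $H' := (\pi \circ p)_*(\pi_1(X))$ contains every short loop at every vertex, hence every conjugate of a simple loop of length $\leq 2R$, so $H' \supseteq N$. On the other hand $\pi \circ p$ factors through $\pi$, which corresponds to $N$, so $H' \subseteq N$. Therefore $H' = N$, which forces $p_*$ to surject onto $\pi_1(Y) = N$; hence $p$ is one-sheeted, i.e.\ a graph isomorphism. But then $X \cong Y$ is $2R$-simply connected, contradicting the standing hypothesis.

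The main obstacle is the counting step yielding bijectivity of $p$ on $R$-balls, where both local finiteness (to make balls finite) and vertex-transitivity of $X$ (to make the ball cardinality independent of the center) are essential. Without these, the equality $|B_X(v,R)| = |B_Y(p(v),R)|$ is unavailable, and one would have to estimate the injectivity diameter of $p$ by other means.
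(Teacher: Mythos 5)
Your proposal is correct and follows essentially the same route as the paper: take the $2R$-universal cover as the test graph, check it is $R$-locally $X$ via the decomposition of short closed walks into simple loops, invoke LG-rigidity to get a covering $X\to Y$, and use the cardinality count on $R$-balls (which is where vertex-transitivity and local finiteness enter, exactly as in the paper's Lemma preceding Corollary \ref{cor:selfcover}) to force that covering to be an isomorphism. The only cosmetic difference is your endgame: you pass through $\pi_1$ and subgroup comparison to see that $p$ is one-sheeted, whereas the paper concludes more directly that the self-covering $q\circ p$ is an automorphism and hence $p$ is injective; both rest on the same counting step.
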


This is tight as shown by the standard Cayley graph  $X$  of $\Z^2$. By \cite{BE}, $X$ is LG-rigid at scale $2$. However, it is obviously not 3-simply connected as the smallest non-trivial simple loops in $X$ have length 4. 

Let $G$ be a finitely generated group and let $S$ be a finite symmetric generating subset. It is well-known that the Cayley graph $(G,S)$ is large-scale simply connected if and only if $G$ is finitely presented. More precisely $(G,S)$ is $k$-simply connected if and only if $G$ has a presentation $\langle S|R\rangle$ with relations of length at most $k$. By proposition \ref{prop:rigid implies sc}, it follows that a Cayley graph of a finitely generated group that is not finitely presented is not LG-rigid.

Let us pause here, recalling that the notion of LG-rigidity was introduced by Benjamini and Georgakopoulos in \cite{BsF,G}. The main result of \cite{G} is 
\begin{thmm}\label{thm:G}\cite{G}
One-ended planar vertex-transitive graphs are LG-rigid.
\end{thmm}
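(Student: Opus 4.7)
The plan is to establish LG-rigidity by exploiting the essential uniqueness of the planar embedding of $X$, transferring face structure to any locally-$X$ graph $Y$, and then producing a covering via a developing map argument.

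First, I would invoke the structure theory of one-ended planar vertex-transitive graphs: such an $X$ admits a proper embedding in the plane whose faces are topological discs with boundary cycles of length at most some uniform $k$. One-endedness together with local finiteness rules out accumulation points of the embedding, which is what ensures the faces are relatively compact; vertex-transitivity then yields the uniform bound $k$ on face lengths. A Whitney-type uniqueness theorem, valid for planar graphs that are sufficiently highly connected (and handled by direct arguments in the low-connectivity cases using the cocompact action of $\Isom(X)$), shows that this embedding is unique up to reflection. Let $\tilde{X}$ denote the polygonal $2$-complex obtained from $X$ by attaching a $2$-cell along each face cycle of the embedding. By Jordan--Sch\"onflies $\tilde{X}$ is homeomorphic to $\R^2$, hence simply connected.

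Next, I would set $R=2k$ (or any $R$ large enough to contain all face cycles through a vertex) and explain the transfer. Given a graph $Y$ that is $R$-locally $X$, for each $y\in Y$ choose an isometry $\phi_y:B_X(x_y,R)\to B_Y(y,R)$ and declare the $\phi_y$-images of the faces of $X$ through $x_y$ to be faces of $Y$. The essential uniqueness of the planar embedding implies that this face assignment does not depend on the chosen $\phi_y$: two such isometries differ by a local automorphism of $X$, which by Whitney-uniqueness permutes the face cycles at $x_y$. One checks that the face assignments at distinct vertices agree on overlapping faces, so one obtains a well-defined polygonal $2$-complex $\tilde{Y}$ with $1$-skeleton $Y$. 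By construction $\tilde{Y}$ is locally isomorphic, as a $2$-complex, to $\tilde{X}$ on balls of radius at least $R-k$ around every vertex.

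Finally, I would run a standard developing argument. Since $\tilde{X}$ is simply connected and each local isomorphism $\phi_y$ extends to a $2$-complex isomorphism of a ball in $\tilde{Y}$ onto a ball in $\tilde{X}$, one picks a base germ and extends it by analytic continuation along paths in $\tilde{Y}$; simple connectedness of $\tilde{X}$ guarantees path-independence, producing a local isometry $\pi:\tilde{Y}\to \tilde{X}$ which is automatically a covering of $2$-complexes. Restricting $\pi$ to $1$-skeleta gives a graph covering $Y\to X$, so $X$ is LG-rigid at scale $R$.

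The principal obstacle is the first step: rigorously showing that a one-ended planar vertex-transitive graph admits a planar embedding whose faces have uniformly bounded length and which is unique up to reflection. The uniform face-length bound requires ruling out ``accumulating'' faces using one-endedness and the cocompact isometric action, and the uniqueness statement requires adapting Whitney's theorem to handle the possible failure of $3$-connectedness, using the vertex-transitive structure and the end structure to control the tree of cut-pairs. A secondary, more minor difficulty is coherence of orientations when $X$ possesses orientation-reversing symmetries, which is why I work with the unoriented $2$-complex $\tilde{X}$ rather than with a rotation system directly.
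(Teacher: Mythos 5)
Your overall plan (transfer the essentially unique face structure of the planar embedding to $Y$, then develop) is essentially Georgakopoulos' original route and is genuinely different from how the present paper obtains the statement: here it is recovered by combining Babai's theorem (the isometry group of a one-ended planar vertex-transitive graph embeds as a discrete cocompact subgroup of $\PSL(2,\R)$ or $\Isom(\R^2)$) with Theorem \ref{thm:mainIntro}, which upgrades discreteness of the isometry group plus large-scale simple connectedness to USLG-rigidity. That said, your final step contains a genuine error of direction. LG-rigidity requires that every $Y$ which is $R$-locally $X$ be \emph{covered by} $X$, i.e.\ one must produce a covering $X\to Y$; a covering $Y\to X$ is neither what is asked nor generally available (for the grid $\Z^2$ and a large torus quotient $Y$, only the covering $X\to Y$ exists). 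Moreover, the developing map as you set it up cannot be built: continuation of a germ along paths in $\tilde Y$ is path-independent only if the monodromy over loops in the \emph{domain} $\tilde Y$ vanishes, i.e.\ one needs $\tilde Y$ to be simply connected --- which is unknown, and is exactly what fails for proper quotients of $X$. Simple connectedness of the \emph{target} $\tilde X$ is irrelevant to this monodromy.

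The repair is to run the development the other way: fix a base isometry $f_0\colon B_X(x_0,R)\to Y$ and continue its germ along paths in $X$; path-independence now genuinely follows from simple connectedness of $\tilde X\cong\R^2$, and the resulting map $X\to Y$ is a covering, as required. (Equivalently, one shows that the $k$-universal cover of $Y$ develops isomorphically onto $X$.) For this continuation to be well defined you still need \emph{uniqueness} of the extension of a germ from a ball around $x$ to a ball around an adjacent $x'$ --- without it, ``continuation along a path'' is not a single-valued operation. This is where the planar structure must earn its keep (an isomorphism germ of the planar $2$-complexes is determined by a single flag), or, as in the paper's scheme, where one invokes a compactness argument in the spirit of Lemma \ref{lem:cocompact} together with discreteness of $\Isom(X)$. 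With the direction corrected and this uniqueness made explicit, the remaining steps you flag (uniform face bound, Whitney-type uniqueness, coherence of the face assignment) are indeed the real technical content.
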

Examples of LG-rigid vertex-transitive graphs also include the standard Cayley graphs of $\Z^d$ \cite{BE}.  All these examples are now covered by the following theorem. 

\begin{ithm}\label{thm:mainIntro}
Let $X$ be a large-scale simply connected graph whose group of isometries $\Isom(X)$ is cocompact (e.g.\  $X$ is vertex-transitive). Then  $X$ is USLG-rigid if (and only if) the vertex-stabilizers of $\Isom(X)$ are finite.
\end{ithm}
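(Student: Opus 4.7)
The ``only if'' direction is a compactness argument: if $\Isom(X)_{x_0}$ is infinite, then since $B(x_0,R)$ is finite for every $R$, some non-identity $g \in \Isom(X)_{x_0}$ fixes $B(x_0,R)$ pointwise. Taking $Y = X$ and $\phi = \id_{B_X(x_0,R)}$, both $\id$ and $g$ are self-coverings of $X$ extending $\id_{B(x_0,r)}$, so USLG-rigidity at $(r,R)$ fails for every choice of scales.

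For the ``if'' direction, fix $k$ such that $X$ is $k$-simply connected. Since each $\Isom(X)_x$ is finite, the kernels of its action on the nested balls $B(x,n)$ form a descending chain of subgroups with trivial intersection, hence stabilize at $\{e\}$; by cocompactness the stabilizing radius can be chosen uniform in $x$, yielding $r_0$ such that no nontrivial isometry of $X$ fixes any ball of radius $r_0$. Take $r \geq r_0$ and $R \geq r+k$. Given $Y$ which is $R$-locally $X$ and an isometry $\phi \colon B_X(x_0,R) \to B_Y(y_0,R)$, my plan is to construct the covering $f \colon X \to Y$ by path-transport: along an edge-path $(x_0,x_1,\dots,x_n=x)$ in $X$, once $f(x_i) = y_i$ has been fixed, $f(x_{i+1})$ is the unique neighbor of $y_i$ prescribed by the local isomorphism $B_Y(y_i,R) \to B_X(z,R)$ furnished by the $R$-local hypothesis.

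The heart of the argument is showing that $f(x)$ does not depend on the chosen path. By $k$-simple connectivity of $X$, any two paths from $x_0$ to $x$ are related by a finite sequence of elementary homotopies through simple loops of length $\leq k$; provided $R \geq k$, each such loop sits inside a single ball of radius $R$ over which one fixed local isomorphism governs the transport, so $f$ closes up around it and hence is well-defined on vertices. The resulting $f$ is a local graph isomorphism by construction, and surjectivity follows from a standard open-and-closed argument using connectedness of $Y$, so $f$ is a covering. For uniqueness, any second extending covering would differ from $f$ at some vertex by a nontrivial local automorphism of $X$ fixing a ball of radius at least $r_0$, contradicting the choice of $r_0$. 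The main obstacle I expect is the well-definedness step, where one must carefully calibrate the scales $(r,R,k)$ so that the local models at adjacent vertices are pinned down unambiguously; this is precisely where both the large-scale simple connectivity of $X$ and the discreteness of $\Isom(X)$ are indispensable, as evidenced by the counterexamples constructed later in the paper.
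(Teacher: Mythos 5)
Your overall strategy coincides with the paper's: build the covering by transporting local data along paths, use $k$-simple connectedness to make the transport homotopy-invariant, and use discreteness for uniqueness; your ``only if'' direction is also the paper's one-line observation. However, there is a genuine gap at the heart of the ``if'' direction, exactly at the step you flag as ``the main obstacle'': the inductive datum you carry along a path is only the vertex image $f(x_i)=y_i$, and this is not enough to determine $f(x_{i+1})$. The phrase ``the unique neighbor of $y_i$ prescribed by the local isomorphism $B_Y(y_i,R)\to B_X(z,R)$'' does not define anything: the $R$-local hypothesis provides \emph{some} such isometry, but two of them differ by a partial isometry of $X$ fixing $z$, and such a map can permute the neighbors of $z$ nontrivially even when $\Isom(X)$ is discrete (discreteness forbids nontrivial isometries fixing a ball of radius $r_0$ pointwise, not ones fixing a single vertex; think of the stabilizer of a vertex in the grid $\Z^2$). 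So the object that must be transported is not a vertex but a germ, i.e.\ an isometry $\phi_x\colon B_X(x,r_1)\to Y$, together with a proof that a germ at $x$ determines a \emph{unique} compatible germ at each nearby $x'$.

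That uniqueness of propagation is precisely where the paper needs an extra, compactness-derived radius that is absent from your calibration $R\ge r+k$. Lemma \ref{lem:cocompact} and Lemma \ref{lem1} produce, for each $r_1\ge r_c$, a possibly much larger $r_2$ such that any isometry defined on $B_X(x,r_2)$ agrees on $B_X(x,r_1)$ with a global element of $\Isom(X)$; combined with discreteness, such an isometry is then determined on $B_X(x,r_1)$ by its restriction to $B_X(x,r_c)$. Only with this two-radius rigidity does the transported germ at $x'$ become the unique element of $\germ(x')$ agreeing with the old one on $B(x',r_c)$, and only then is the path-transport well defined, so that your homotopy argument (which is correct and matches the paper's Proposition \ref{prop:def_Fxx}) can be run. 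To repair the proof you should (a) replace the transported datum by a germ in the above sense, (b) insert the compactness lemma giving $r_2$ and take $R$ of the order of $r_2+k/2$ rather than $r+k$, and (c) verify that transport around a loop of diameter at most $k/2$ is trivial because the whole loop lies in the domain of a single germ. Your final uniqueness argument is fine once the covering has actually been constructed.
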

In this paper we will always equip $\Isom(X)$ with the topology of pointwise convergence. The assumption that $\Isom(X)$ has finite vertex stabilizers is equivalent to $\Isom(X)$ being discrete for this topology.

Note that Theorem \ref{thm:QT} is not a consequence of  Theorem \ref{thm:mainIntro} as the automorphism group of a tree may have infinite vertex-stabilizers. 
It follows from \cite[Theorem 3.1]{Ba} that the isometry group $G$ of a one ended planar vertex-transitive graph $X$ embeds as a closed (hence discrete) subgroup of either $\PSL(2,\R)$ or of $\Isom(\R^2)$. Hence we deduce from  Theorem \ref{thm:mainIntro} that $X$ is LG-rigid, hence recovering Georgakopoulos' result.

Let us say that a finitely presented group is LG-rigid (resp. USLG-rigid) if all its Cayley graphs are LG-rigid (resp. USLG-rigid). 
Using some structural results due to Furman (for lattices) and Trofimov (for groups with polynomial growth), we obtain, as a corollary of Theorem \ref{thm:mainIntro},  
\begin{icor}\label{thm:lattices}
Under the assumption that they are torsion-free the following groups are USLG-rigid:
\begin{itemize}
\item non-virtually free irreducible lattices in semisimple connected real Lie groups with finite center (e.g.\ simple connected real Lie groups);
\item groups of polynomial growth.
\end{itemize}
\end{icor}




Before presenting the negative answer to Question \ref{que:Cayley}, let us give a useful characterization of LG-rigidity.

\begin{prop}\label{prop:characterizationrigid}
Let $k\in \N$.  Let $X$ be a $k$-simply connected graph with cocompact group of isometries. Then $X$ is LG-rigid if and only there exists $R$ such that every $k$-simply connected graph which is $R$-locally $X$ is isometric to it. 
\end{prop}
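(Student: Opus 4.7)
The plan is to use the following construction in both directions: given a graph $Y$, let $\tilde Y$ denote the $1$-skeleton of the topological universal cover of the $2$-complex $P_k(Y)$, so that the induced map $\tilde Y \to Y$ is a graph covering. A first observation, useful in both directions, is that $\tilde Y$ is automatically $k$-simply connected. Indeed, the $2$-cells of the universal cover of $P_k(Y)$ are attached to lifts in $\tilde Y$ of simple loops of length at most $k$ in $Y$; each such lift is a loop (the $2$-cell closes up in the universal cover), and it remains simple because distinct vertices in $Y$ cannot lift to a single vertex in $\tilde Y$. Thus these $2$-cells are also $2$-cells of $P_k(\tilde Y)$, so $P_k(\tilde Y)$ contains the simply connected universal cover as a subcomplex with the same $1$-skeleton and is itself simply connected.

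For the direction $(\Leftarrow)$, assume the right-hand side with constant $R_0$, and choose $R$ large enough (using cocompactness of $\Isom(X)$ together with $k$-simple connectedness of $X$) that any loop of length at most $2R_0$ appearing inside an $R$-ball of $Y$ bounds a disk of $P_k$-$2$-cells contained in that same ball. Given $Y$ that is $R$-locally $X$, form $\tilde Y$ as above: by the preceding paragraph it is $k$-simply connected, and the choice of $R$ makes $\tilde Y \to Y$ injective on $R_0$-balls, so $\tilde Y$ is $R_0$-locally $Y$ and hence $R_0$-locally $X$. The hypothesis now yields $\tilde Y \simeq X$, and the composition $X \simeq \tilde Y \to Y$ is the required covering of $Y$ by $X$.

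For the direction $(\Rightarrow)$, let $X$ be LG-rigid at scale $R_0$. Choose $R$ large enough that any cover $X \to Y$ with $Y$ being $R$-locally $X$ has deck group $\Gamma$ of translation length $\delta(\Gamma) := \inf\{d(x, gx) : x \in X, g \in \Gamma \setminus \{1\}\}$ strictly greater than $k$; this relies on cocompactness of $\Isom(X)$, which forces there to be only finitely many isomorphism types of $R$-balls in $X$, so that ``folded'' balls arising from short translations $g \in \Gamma$ become distinguishable from standard ones once $R$ is large compared to $k$. Let $Y$ be $k$-simply connected and $R$-locally $X$. By LG-rigidity $X$ covers $Y$ (after passing to the regular closure if necessary, with deck group $\Gamma$). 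Since $\delta(\Gamma) > k$, every simple loop of length $m \leq k$ in $Y$ lifts to a simple loop in $X$, for the lift cannot end at a $\Gamma$-translate of its starting point within distance $m$. This identifies $P_k(Y) = P_k(X)/\Gamma$ as $2$-complexes, and the free action of $\Gamma$ on the simply connected $P_k(X)$ gives $\pi_1(P_k(Y)) = \Gamma$. Since $Y$ is $k$-simply connected, $\Gamma = 1$, i.e., $X \simeq Y$.

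The main technical obstacle is the selection of $R$ in both directions: cocompactness of $\Isom(X)$ must be exploited to ensure $\delta(\Gamma) > k$ in the forward direction by ruling out folded quotients, and to ensure in the reverse direction that short loops in $Y$ bound $P_k$-disks inside sufficiently large balls, which is what makes $\tilde Y \to Y$ a local isomorphism at the required scale.
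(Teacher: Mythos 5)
Your converse direction ($\Leftarrow$) follows the paper's route essentially verbatim: the $k$-universal cover $\tilde Y$ is $k$-simply connected (this is Lemma \ref{lem:Z is k-simply connected}), and cocompactness gives a finite filling radius so that $\tilde Y\to Y$ is injective on $R_0$-balls and hence $\tilde Y$ is $R_0$-locally $X$ (Lemma \ref{lem:covering_injective}); the hypothesis then forces $\tilde Y\simeq X$ and the composite $X\simeq\tilde Y\to Y$ is the desired covering. That half is fine.

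The forward direction has a genuine gap: you treat the covering $p\colon X\to Y$ supplied by LG-rigidity as if it were regular. You introduce ``the deck group $\Gamma$'' and argue that the lift of a simple loop of length $m\le k$ must close up because it ``cannot end at a $\Gamma$-translate of its starting point within distance $m$.'' For a non-regular covering the deck group does not act transitively on fibres, so the endpoint of the lift, while it lies in $p^{-1}(y)$, need not be a $\Gamma$-translate of the starting point, and the condition $\delta(\Gamma)>k$ says nothing about whether the lift closes. ``Passing to the regular closure'' does not repair this: the regular closure sits strictly above $X$ in general, so you would lose the hypothesis that the covering space is $X$ itself, which is the whole point; and the subsequent identifications $P_k(Y)=P_k(X)/\Gamma$ and $\pi_1(P_k(Y))=\Gamma$ both presuppose $Y=X/\Gamma$, i.e.\ regularity. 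The paper's Lemma \ref{lem:scSameScale} avoids deck groups entirely: one shows $p$ is injective on balls of radius $[k/2]+1$ (if a loop of length $m\le k$ at $y$ lifts to a path from $x$ to $x'$ in the fibre over $y$, its midpoint $z$ has both $x$ and $x'$ in $B(z,[k/2]+1)$, so injectivity there forces $x=x'$), deduces that $p$ induces a covering $P_k(X)\to P_k(Y)$ of $2$-complexes, and concludes because a connected covering of the simply connected $P_k(Y)$ is a homeomorphism --- no regularity is needed. Finally, your justification for choosing $R$ (``folded balls arising from short translations become distinguishable from standard ones'') is an assertion rather than an argument; the statement you actually need, and should prove directly from $Y$ being $R$-locally $X$ (e.g.\ by comparing cardinalities of balls), is precisely the injectivity of $p$ on balls of radius $[k/2]+1$.
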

The same proof shows that $X$ is USLG-rigid if and only if for all sufficiently large $r$ there exists  $R\geq r$ such that the restriction to a ball of radius $r$ of every isometry from a ball of radius $R$ in $X$ to a ball of radius $r$ in a $R$-locally $X$ $k$-simply connected graph $Y$ extends uniquely to an isometry from $X$ to $Y$.
As an almost immediate corollary of the proof of Proposition \ref{prop:characterizationrigid}, we get 
\begin{cor}\label{cor:Y=cayley}
  Let $X$ be a Cayley graph of a finitely presented group. Then there exists $R$ such that every {\it Cayley graph} which is $R$-locally $X$ is covered by $X$.
\end{cor}
In other words, Cayley graphs of finitely presented groups are LG-rigid among Cayley graphs. We shall see later that this is not true among arbitrary graphs, not even among vertex transitive ones.

Finally we mention that in \cite{FT} an example of an infinite transitive graph $X$ was given, which is isolated among all \emph{transitive} graphs in the sense that there exists $R$ such that $X$ is the only transitive graph which is $R$-locally $X$.

\subsection{Flexibility in presence of a finite normal subgroup}\label{secIntroFlex}
We now explain the source of counterexamples to Question \ref{que:Cayley}. In this theorem $H^2(\cdot,\Z/2\Z)$ denotes the second cohomology group with values in $\Z/2\Z$ (see \S \ref{section:counterexampleGraph} for reminders on this notion).

\begin{ithm}\label{thm:2covering_notNormal2} Let $H$ be a finitely presented group and $\widetilde H$ be an extension of $H$ by $\Z/2\Z$.
  Assume that $H$ contains a finitely generated subgroup $G$ such that $H^2(G,\Z/2\Z)$ is infinite. Assume moreover that $G$ has an element of infinite order and that  the preimages in $\widetilde H$ of every element of $G$ of order $2$ have order $2$. Then $\widetilde H$ has a Cayley graph that is not LG-rigid.
\end{ithm}

The case when $\widetilde H=\Z/2\Z\times H$ is much easier (the proof is 4 pages instead of 12) and is partly based on the same idea, hence we decided to give it as a warm-up in \S \ref{section:counterexampleGraph}.
Moreover this case does not require any assumption on $G$.

Requiring that $G$ is normal and that $H$ is a semidirect product of $G$ by $H/G$, we can get a stronger form of non LG-rigidity, where the graphs negating the LG-rigidity are \emph{transitive} graphs: 

\begin{ithm}\label{thm:2covering_Normal2}
Let $H$ be a finitely presented group and $\widetilde H$ be an extension of $H$ by $\Z/2\Z$. Assume that $H$ is isomorphic to a semi-direct product $G\rtimes  Q$ such that $G$ is finitely generated and $H^2(G,\Z/2\Z)$ is infinite. Assume moreover that $G$ has an element of infinite order and that the preimages in $\widetilde H$ of every element of $G$ of order $2$ have order $2$. Then there is a Cayley graph $X$ of $\tilde H$ that is not LG-rigid among transitive graphs. More precisely, for every $R\ge 1$, there exists a family with the cardinality of the continuum $(X_{i})_{i\in I}$ of large-scale simply connected vertex-transitive graphs that are pairwise non-isometric such that for every $i\in I$,
\begin{itemize}
\item[(i)] $X_i$ is $R$-locally $X$ and $4$-bilipschitz equivalent to $X$;
\item[(ii)] The isometry group of $X_i$ is an extension of a discrete group by $(\Z/2\Z)^\N$.
\end{itemize}
\end{ithm}

\begin{rem}\label{rem:NonLGrigid}
Being $4$-bilipschitz equivalent to $X$, $X_i$ is $k$-simply connected where $k$ does not depend neither of $R$ nor of $i$ (by Theorem \ref{thm:QIinvariance}). It follows from (i) and Proposition \ref{prop:characterizationrigid} that $X$ is not LG-rigid (in a very strong sense).   
\end{rem}
\begin{rem}\label{rem:torsionfree}
The assumption that $G$ has an element of infinite order is conjecturally not needed. It is a minor technical assumption that allows us to use a variant of Theorem \ref{thm:discrete}. Without it we can prove the Theorem for a Cayley graph of $\Z/N\Z \times H$ for some $N$.
\end{rem}

An explicit example for which  Theorem \ref{thm:2covering_Normal2} applies is $H=\F_2 \times \F_2$, and $G$ the kernel of the homomorphism $\F_2 \times \F_2 \to \Z$ which sends each generator of each copy of the free group $\F_2$ to $1$. Alternatively, one could also take for $H$ a product of two surface groups of genus at least $2$. This fact is probably known, and was explained to us by Jean-Claude Sikorav. We could not find a reference in the literature and instead provide a proof in Appendix \ref{sec:appendix}. Note that Theorem \ref{thm:main_negative} is a consequence
of Appendix \ref{sec:appendix} and Theorem \ref{thm:2covering_Normal2}.

In particular, we deduce that Theorem \ref{thm:2covering_notNormal2} applies to any finitely presented group $H$ containing $\F_2 \times \F_2$. For example to $\PSL(4,\Z)$, which contains a subgroup isomorphic to $\F_2 \times \F_2$ because $\PSL(2,\Z)$ contains a subgroup isomorphic to $\F_2$. So Theorem \ref{thm:main_negative} is a consequence of Appendix \ref{sec:appendix} and Theorem \ref{thm:2covering_Normal2}.

We end this discussion with the following question.

\begin{que}
Among lattices in semisimple Lie groups, which ones are LG-rigid? For instance is $\PSL(3,\Z)$ LG-rigid?
\end{que}

Note that since large-scale simply connected Cayley graphs are precisely Cayley graphs of finitely presented groups, there are countably many such isometry classes of such graphs. Cornulier asked whether there exist uncountably many isometry classes of large-scale simply connected vertex-transitive graphs. The previous theorem answers positively this question. It would be interesting to know whether there exist uncountably many quasi-isometry classes of  large-scale simply connected vertex-transitive graphs. Observe that this is not answered by our result.

\subsection{Cayley graphs with discrete isometry group}

We conjecture that every finitely generated group has a Cayley graph (without multiple edges) with discrete isometry group. 
In the general case the closest to this conjecture that we can get is the following theorem.
\begin{ithm}\label{prop:discreteIsometriesCayleyGraph} Let $G$ be a finitely generated group. There is a finite cyclic group $F$ and a Cayley graph of $G \times F$ with discrete isometry group.
\end{ithm}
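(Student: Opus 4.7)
The plan is to construct an explicit Cayley graph of $\Gamma \times F$ with discrete isometry group, for some finite cyclic $F$. Let $S = \{s_1, \ldots, s_n\}$ be a symmetric generating set of $\Gamma$ with inverse-pairing involution $\iota$, i.e.\ $s_{\iota(i)} = s_i^{-1}$. A preliminary reduction handles involutions: if some $s_i^2 = e$, I replace $\Gamma$ by $\Gamma \times \Z/3\Z$ and each such $s_i$ by $(s_i, 1) \in \Gamma \times \Z/3\Z$, which has order a multiple of $6$; the extra $\Z/3\Z$ factor is absorbed into the final $F$ by choosing $\gcd(|F|, 3)=1$. Pick $F = \Z/N\Z$ with $N$ a large prime, and choose pairwise distinct labels $a_i \in F \setminus \{0, \pm 1\}$ with $a_{\iota(i)} = -a_i$ and generic enough to avoid short additive resonances of the form $\sum \epsilon_j a_{i_j} \equiv c \pmod{N}$ with $c \in \{0, \pm 1, \pm a_k\}$ and small-weight $\epsilon_j \in \{-1, 0, 1\}$. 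Form the basic twisted generating set $T_0 = \{(s_i, a_i)\} \cup \{(e, 1), (e, -1)\}$ of $\Gamma \times F$ and the Cayley graph $X_0 = \mathrm{Cay}(\Gamma \times F, T_0)$.

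A cycle-counting analysis distinguishes the \emph{vertical} edges (those coming from $(e, \pm 1)$) from the \emph{horizontal} ones (from $(s_i, a_i)$): for generic labels and $n \geq 2$, each vertical edge lies on $2n$ embedded $4$-cycles (one per ``commutation rectangle'' $(g, f) \to (g, f{+}1) \to (g s_i, f{+}1{+}a_i) \to (g s_i, f{+}a_i) \to (g, f)$) while each horizontal edge lies on only $2$. Any $\phi \in \Isom(X_0)$ fixing $(e, 0)$ therefore preserves this partition, hence preserves the partition of $X_0$ into the vertical cycles $\{g\} \times F$, and induces an isometry $\bar\phi$ of $\mathrm{Cay}(\Gamma, S)$ fixing $e$. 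Unfortunately, $X_0$ alone need not have discrete isometry group: when $\Gamma$ is (say) a free group, every tree automorphism $\bar\phi$ fixing $e$ lifts to an isometry of $X_0$ by the cocycle $\phi(g, f) = (\bar\phi(g), \epsilon f + c_g)$ where $c_g$ is integrated along any path from $e$ to $g$ (well-defined since $\Gamma$ has no relations), giving an uncountable stabilizer. To remedy this I augment $T_0$ with a set $T_{\mathrm{rig}}$ of rigidifying generators---e.g.\ $(s_i s_j, a_i + a_j)$ for selected pairs $(i, j)$, creating triangles in $X := \mathrm{Cay}(\Gamma \times F, T_0 \cup T_{\mathrm{rig}})$ that force $\bar\phi$ to preserve length-$2$ products. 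Once $\bar\phi$ is constrained to a signed permutation of (an enlargement of) $S$, combined with the finite choice of $\epsilon \in \{\pm 1\}$ and the at most two isometries of $\{e\} \times F$ fixing $(e, 0)$, one concludes $|\Isom(X)_{(e, 0)}| < \infty$, so $\Isom(X)$ is discrete.

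The main obstacle is choosing $T_{\mathrm{rig}}$ robustly for arbitrary $\Gamma$. When $\mathrm{Cay}(\Gamma, S)$ already has discrete isometry group at $e$---which holds for a very large class of $\Gamma$, in particular those admitting a graphical regular representation, and for all abelian groups---one can even take $T_{\mathrm{rig}} = \emptyset$ and $|\Isom(X)_{(e, 0)}| \leq 2 \cdot |\Isom(\mathrm{Cay}(\Gamma, S))_e|$. For groups with highly symmetric base Cayley graphs (free groups, certain wreath products), one must include enough rigidifying generators to break all tree-like freedom while avoiding the introduction of new symmetries, which is the delicate technical heart of the argument. This construction is expected to follow the template of the earlier Theorem~\ref{thm:discrete} of the paper (applicable when $\Gamma$ contains an element of infinite order) together with the variant alluded to in Remark~\ref{rem:torsionfree} for the torsion case; in the latter situation, the cyclic factor $F$ in the statement is what supplies the additional degree of freedom needed to run the construction.
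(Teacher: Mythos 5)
There is a genuine gap: your argument is a plan rather than a proof. You correctly diagnose that the basic twisted construction $X_0$ fails for groups like free groups, whose Cayley graphs have enormous isometry groups, but you then defer the resolution to an unspecified set $T_{\mathrm{rig}}$ of ``rigidifying generators'' and explicitly concede that choosing it ``robustly for arbitrary $\Gamma$'' is ``the delicate technical heart of the argument.'' That heart is precisely what the theorem requires and what your proposal does not supply. A second, more structural problem is that with a single cyclic label $a_i$ per generator, the edges between adjacent fibers $\{\gamma\}\times F$ and $\{\gamma s_i\}\times F$ form a perfect matching, so your quotient map $\bar\phi$ is only an isometry of the \emph{unlabelled} graph $(\Gamma,S)$; nothing in your construction forces $\bar\phi$ to remember which generator each edge comes from, and this is exactly where the uncountable stabilizer re-enters.

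The paper resolves both issues with two ideas you are missing. First, Lemma \ref{lem:markedgraph}: \emph{every} finitely generated group (free groups included) admits a finite generating set $S_0$ whose \emph{marked} Cayley graph --- edges labelled by $\{s,s^{-1}\}$ --- has discrete isometry group; the proof is an elementary induction using the punctured ball $S_N$ of radius $N>|S_1|$ and a pigeonhole on repeated letters in a geodesic word. This entirely replaces your search for $T_{\mathrm{rig}}$. Second, the marking is encoded into the plain graph of $\Gamma\times F$ not by generic labels but by taking $F=\prod_i \Z/p_i\Z$ with distinct primes $p_i$ exceeding the largest clique size of $(\Gamma,S)$, and generating set $\bigcup_i\bigl(\{s_i,s_i^{-1}\}\times \Z/p_i\Z\bigr)\cup\bigl(\{1_\Gamma\}\times(F\setminus\{0\})\bigr)$. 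The fibers $\{\gamma\}\times F$ are then the unique maximal cliques, so every isometry descends to $(\Gamma,S)$, and the \emph{number} of edges between adjacent fibers (which depends on $p_i$) recovers $\{s_i,s_i^{-1}\}$, forcing the descended map to be a marked-graph isometry. Finally the kernel is killed by arranging that each $s\in S_0$ admits $s'$ with $ss'\in S$ and $F_s\cap(F_{s'}+F_{ss'})=\{0\}$, a step with no counterpart in your sketch. Your 4-cycle count distinguishing vertical from horizontal edges is a reasonable substitute for the clique argument, but on its own it cannot close the argument for arbitrary $\Gamma$.
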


More involved is the following result, where we prove the conjecture in the case when the group has an element of infinite order. A variant of this result plays a crucial role in the proof of Theorem \ref{thm:2covering_Normal2}.

\begin{ithm}\label{thm:discrete}
Every finitely generated group $G$ with an element of infinite order admits a Cayley graph $(G,S)$ with discrete group of isometries. If in addition $G$ is finitely presented, we deduce that $(G,S)$ is USLG-rigid. 
\end{ithm}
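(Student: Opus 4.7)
First, the USLG-rigidity portion is an immediate application of Theorem \ref{thm:mainIntro}: if $\Gamma$ is finitely presented then any Cayley graph $(\Gamma,S)$ is $k$-simply connected for some $k$ (take $k$ to be the maximal length of a relator in a finite presentation on the generating set $S$), and a locally finite vertex-transitive graph has discrete isometry group if and only if its vertex stabilizers are finite (a profinite group is discrete iff finite). Thus the substance of the theorem is the construction of a generating set $S$ making $\Isom(\Gamma,S)$ discrete, equivalently, for which some ball $B(e,R)$ has trivial pointwise stabilizer in $\Isom(\Gamma,S)$.

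My strategy is to leverage the infinite-order element $a$ in order to place rigid ``tags'' on the generators. Starting from any symmetric finite generating set $S_0$ containing $\{a, a^{-1}\}$, I would augment $S_0$ by adding, for each $s \in S_0 \setminus \{a^{\pm 1}\}$, a new generator $t_s := s a^{n_s}$ (and its inverse), where the integers $n_s$ are pairwise distinct and much larger than the girth of $(\Gamma,S_0)$. In the resulting Cayley graph $X = (\Gamma, S)$, the triple $(e, s, t_s)$ closes a loop of length $n_s + 2$ consisting of one $s$-edge, $n_s$ consecutive $a$-edges and one $t_s^{-1}$-edge. Because $a$ has infinite order, the $a$-line through $e$ is a bi-infinite geodesic, so the length $n_s + 2$ is a faithful invariant attached to the edge $\{e, s\}$. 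Any isometry $\phi$ of $X$ fixing the ball $B(e, R)$ for $R > \max_s (n_s + 2)$ must preserve each such loop, hence fix each $s$ and each $t_s$; equivariance under left translation by $\Gamma$ then propagates this rigidity to every vertex, giving $\phi = \id$.

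The main obstacle is verifying that the ``signature loops'' of length $n_s + 2$ are truly canonically attached to the edge $\{e, s\}$ and are not confused with other short cycles of $X$. This requires choosing the $n_s$ large enough that no short relator of $\Gamma$ (expressed in $S_0$) can create a competing loop through $e$ of the same length, and carefully analyzing how cycles in $X$ decompose into cycles of $(\Gamma, S_0)$ and cycles using the newly added tag edges. The infinite order of $a$ is essential here: it prevents the $a$-line from closing up into a finite cycle that would create spurious coincidences of gadget lengths, which is precisely the obstruction alluded to in Remark \ref{rem:torsionfree} explaining why without it one must pass to $\Z/N\Z \times \Gamma$ as in Theorem \ref{prop:discreteIsometriesCayleyGraph}.
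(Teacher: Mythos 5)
Your reduction of the USLG-rigidity claim to Theorem \ref{thm:mainIntro} is correct, and your guiding idea --- use powers of the infinite-order element $a$ to attach a recognizable ``tag'' to each generator --- is in fact the idea of the paper. But the execution has genuine gaps, and the first two concern exactly the point you flag as ``the main obstacle'' without resolving it. First, the assertion that the $a$-line through $e$ is a bi-infinite geodesic is false in general: $\langle a\rangle$ may be distorted in $\Gamma$ (e.g.\ $a$ in $BS(1,2)$), so the combinatorial length $n_s+2$ of your signature loop has no canonical metric meaning. Second, and more seriously, ``there is a simple cycle of length $n_s+2$ through the edge $\{e,s\}$'' is not a usable isometry invariant: in a Cayley graph the number of simple cycles of a given large length through a given edge is typically enormous (every relator, and every word in the generators equal to $e$, contributes), and enlarging $n_s$ only makes this worse; nothing singles out your gadget loop among them. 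This is precisely why the paper works with \emph{triangles}: the number $N_3(s,S)$ of triangles containing the edge $\{e,s\}$ equals $|sS\cap S|$, a finite and locally computable quantity, and Lemma \ref{lem:increase_triangles} shows that adjoining the four elements $\gamma^n,\gamma^{-n},s_0^{-1}\gamma^n,\gamma^{-n}s_0$ (your tag, collapsed to a triangle through $\gamma^n$) changes $N_3(\cdot,\cdot)$ in a completely controlled way, the infinite order of $\gamma$ guaranteeing that the new elements eventually leave any finite set. Iterating this (Lemma \ref{prop:recover_marked_graph_from_larger_graph}) makes $s\mapsto N_3(s,S)$ injective on $S_0$ up to inversion, so every isometry of $(\Gamma,S)$ preserves the edge-labelling by $\{s,s^{-1}\}$, $s\in S_0$.

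Third, your propagation step is invalid: an arbitrary isometry $\phi$ of a Cayley graph is \emph{not} equivariant under left translations, so the fact that $\phi$ fixes $B(e,R)$ pointwise does not ``propagate by equivariance'' to the rest of the graph. This is where the real work lies, and the paper isolates it in Lemma \ref{lem:markedgraph}: for a suitable $S_0$ (all elements of word length between $1$ and $N$ for $N$ exceeding the size of an initial generating set), an isometry of the \emph{marked} Cayley graph $(\Gamma,S_0)$ fixing a ball of radius $N-1$ is shown to be the identity by induction on word length, using the labels to pin down images of vertices one sphere at a time. Your argument needs both a correct, locally computable invariant in place of the signature loops and a substitute for this induction; as written, neither is supplied.
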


Let us mention the following consequence, which answers a question by Georgakopoulos \cite[Problem 1.2]{G} under the additional assumption that $G$ has an element of infinite order.
\begin{icor}\label{cor:residualfinite} Let $G$ be a finitely presented group with an element of infinite order. If all the Cayley graphs $X$ of $G$ admit a sequence $(Y_n)_n$ of finite graphs which are $n$-locally $X$, then $G$ is residually finite.
\end{icor}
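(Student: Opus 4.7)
The plan is to apply Theorem~\ref{thm:discrete} and use the uniqueness part of USLG-rigidity to realize each $Y_n$ as a quotient of $X$ by a finite index subgroup of $\Gamma$ whose minimum displacement at the basepoint grows with $n$.

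Concretely, let $X=(\Gamma,S)$ be the Cayley graph provided by Theorem~\ref{thm:discrete}: it has discrete isometry group and, since $\Gamma$ is finitely presented, it is USLG-rigid at some scales $(r,R)$. Let $(Y_n)_n$ be a sequence of finite graphs which are $n$-locally $X$, produced by the hypothesis applied to this particular Cayley graph, and denote by $v$ the identity of $\Gamma$ viewed as a vertex of $X$. For each $n\geq R$, pick a vertex $y_n\in Y_n$ together with an isometry $\phi_n:B_X(v,n)\to B_{Y_n}(y_n,n)$; applying USLG-rigidity to $\phi_n|_{B_X(v,R)}$ produces a unique covering $p_n:X\to Y_n$ extending $\phi_n|_{B_X(v,r)}$.

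The key technical step is to show that $p_n$ has injectivity radius at least $n$ at $v$. Both $p_n$ and $\phi_n$ are local isometries into $Y_n$ that coincide on $B_X(v,r)$, and a local isometry of a simple graph is determined inductively from its values on any ball of positive radius (the image of each edge is forced once the image of one endpoint is known). A simple induction on the distance to $v$ therefore shows that $p_n$ agrees with the isometry $\phi_n$ on all of $B_X(v,n)$; in particular $v$ is the unique preimage of $y_n$ inside that ball. Letting $k$ be such that $P_k(X)$ is simply connected (and $R\geq k$), this injectivity forces simple loops of length $\leq k$ to project to simple loops in $Y_n$, so $p_n$ extends to a covering $P_k(X)\to P_k(Y_n)$; since the top is simply connected this is the universal cover, hence regular, and its deck group $D_n\leq\Isom(X)$ acts freely on $X$ with finite quotient $Y_n$.

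To conclude, $\Isom(X)$ is discrete with finite vertex stabilizers, and $\Gamma$ acts simply transitively on the vertices; hence $\Gamma$ is a finite index subgroup of $\Isom(X)$. Because $D_n$ also has finite index in $\Isom(X)$ (its orbit space $Y_n$ being finite), $\Gamma_n:=\Gamma\cap D_n$ is a finite index subgroup of $\Gamma$. For any $g\in\Gamma\setminus\{e\}$ and $n\geq |g|_S$, if $g$ belonged to $D_n$ then $gv$ would be a preimage of $y_n$, distinct from $v$ by freeness of the $\Gamma$-action, at distance $|g|_S\leq n$ from $v$, contradicting the injectivity radius estimate. Thus $\bigcap_n\Gamma_n=\{e\}$, and taking normal cores yields finite index normal subgroups of $\Gamma$ separating every non-trivial element from $e$, i.e.\ $\Gamma$ is residually finite. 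The main obstacle is the injectivity radius propagation from $B_X(v,r)$ to $B_X(v,n)$; the rest of the argument is bookkeeping built around the Cayley graph produced by Theorem~\ref{thm:discrete}.
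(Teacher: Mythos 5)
Your overall strategy is the paper's: produce via Theorem~\ref{thm:discrete} a Cayley graph $X$ with discrete isometry group, invoke USLG-rigidity, realize each $Y_n$ as a quotient of $X$ by a freely acting group of isometries with displacement growing in $n$, and intersect with $\Gamma$. However, your ``key technical step'' is wrongly justified, and this is a genuine gap. A local isometry (covering map) of a simple graph is \emph{not} determined by its values on a ball of positive radius: the image of an edge is not forced by the image of one endpoint, since the star of a vertex of degree $d$ can be sent to the star of its image in $d!$ ways. Already for a regular tree (or any graph with non-discrete isometry group) two coverings onto the same target can agree on a large ball and differ outside it. The true statement in your setting does use the discreteness of $\Isom(X)$: there is a radius $r_c$ such that partial isometries are determined by their restriction to $r_c$-balls (Lemma~\ref{lem1}), and one must propagate agreement of $p_n$ and $\phi_n$ outward ball by ball as in Remark~\ref{rem:slg+discrete->USLG} --- not by the edge-by-edge argument you give. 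Moreover, even granting your claim, you only obtain injectivity of $p_n$ on the single ball $B(v,n)$, whereas the next step (inducing a covering $P_k(X)\to P_k(Y_n)$, cf.\ Lemma~\ref{lem:scSameScale}) requires injectivity on balls of radius $[k/2]+1$ around \emph{every} vertex. Both defects are repaired at once by the counting argument the paper uses: a covering satisfies $p_n(B_X(x,n))=B_{Y_n}(p_n(x),n)$, and since $Y_n$ is $n$-locally the vertex-transitive graph $X$ these two balls have the same finite cardinality, so $p_n$ is injective on every ball of radius $n$. No propagation from $B(v,r)$ is needed at all.

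Once that step is replaced, the rest of your argument works and is a legitimate variant of the paper's. Where the paper proves regularity of the covering via the uniqueness clause of USLG-rigidity (Proposition~\ref{prop:USLG_implies_covers_are_quotients}), you pass to the universal cover of $P_k(Y_n)$ and take its deck group; both routes are fine, and yours makes the role of finite presentability transparent. Your endgame --- showing $D_n$ and $\Gamma$ both have finite index in the discrete group $\Isom(X)$, setting $\Gamma_n=\Gamma\cap D_n$, bounding word length of nontrivial elements of $\Gamma_n$ from below by the injectivity radius, and taking normal cores --- is clean, and in fact sidesteps the paper's assertion that right multiplication by $\Gamma$ descends to $H\backslash X$, which requires its own justification.
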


\subsection{From graphs to cocompact geodesic metric spaces}

Finally, one may wonder whether Theorem \ref{thm:mainIntro} can be generalized to more general geodesic metric spaces. The following construction provides serious limitations to this hope.  
\begin{ithm}\label{thm:graph}
There exists a  metric space $X$ with the following properties.
\begin{itemize}
\item[(i)] $X$ is proper, geodesic, and contractible.
\item[(ii)] $\Isom(X)\simeq \Z$ (in particular it has trivial point stabilizers).
\item[(iii)] $\Isom(X)$ is cocompact. More precisely, there exists $x\in X$ such that $\Isom(X)\cdot B(x,1)=X$.
\item[(iv)] For every $R$, there exists a continuum of pairwise non isometric metric spaces $Y_R$ which are $R$-locally $X$ and satisfying (i), (ii) and (iii).
\item[(v)] For every $R$, there exists a continuum of pairwise non isometric metric spaces $Y_R'$ which are $R$-locally $X$ but have a trivial isometry group.
\item[(vi)] For every $R$, there exists a continuum of pairwise non isometric metric spaces $Y_R''$ which are $R$-locally $X$ and have an uncountable isometry group (cocompact or not).
\end{itemize}
\end{ithm}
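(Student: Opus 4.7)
The goal is to build a metric space $X$ flexible enough to admit continuum-many $R$-local variations yet rigid enough to have $\Isom(X)\simeq\Z$. I would start from a ``thickened line'': take the real line $\R$ and, at each integer $n$, attach a compact tree-like decoration $M_n$, all of them isometric copies of a fixed compact model $M$. The model $M$ has a distinguished attachment point $o\in M$ and two isometric ``arms'' emanating from $o$; the arms terminate in non-isometric asymmetric ``bumps'' $A$ and $B$. Since $A\not\cong B$, the arm-swap defines a natural involution of $M$ as a set but is not an isometry. To prevent the reflection $x\mapsto -x$ of $\R$ from being an isometry of the centipede, one attaches a small asymmetric marker between consecutive integers.

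With these choices $X$ is proper (as $M$ is compact), geodesic (by construction), contractible (as $M$ is a tree), has $\Isom(X)\simeq\Z$ acting by integer translations, and the $\Z$-action is cocompact; possibly after rescaling one can ensure $\Isom(X)\cdot B(x,1)=X$. This gives (i)--(iii).

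For (iv), I would associate to each $\epsilon\in\{0,1\}^{\Z}$ the space $Y_\epsilon$ obtained from $X$ by applying the arm-swap to the $n$-th decoration whenever $\epsilon_n=1$. Since both ``swap states'' of $M$ (equivalently, balls near $A$ and balls near $B$) are already realized inside $X$, every $R$-ball of $Y_\epsilon$ is isometric to some $R$-ball of $X$. Generic choices of $\epsilon$ yield pairwise non-isometric $Y_\epsilon$, so there is a continuum of such spaces. For (v), choose $\epsilon$ aperiodic and without internal symmetry; then no non-trivial isometry can preserve the pattern of swaps and $\Isom(Y_\epsilon)$ is trivial. For (vi), replace the rigid bumps $A$, $B$ in selected positions by compact spaces that carry continuous self-isometries (e.g.\ small spheres attached at a distinguished point), chosen to be $R$-locally isometric to $A$ and $B$; this makes $\Isom(Y''_R)$ uncountable.

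\textbf{Main obstacle.} The delicate issue is reconciling the strict cocompactness $\Isom(X)\cdot B(x,1)=X$, which forces the decoration $M$ to have diameter at most $2$, with the requirement that bump swaps be invisible at arbitrarily large scale $R$, which naively asks for arms of length $\gtrsim R$. The resolution is to design $X$ so that the ``arms'' of the decorations do not live inside a single compact $M_n$ but extend along the $\R$-direction across many fundamental domains; equivalently, one enlarges the construction into a two-dimensional complex in which the arm-swap operation acts on features running essentially parallel to $\R$. Ensuring that this enlargement does not introduce extra global isometries (so that $\Isom(X)$ remains $\Z$) is the key technical step. The other verifications --- contractibility, properness, the exact computation of $\Isom(Y_\epsilon)$ for various families of $\epsilon$, and the pairwise non-isometry of the continuum of $Y_\epsilon$'s --- are then routine.
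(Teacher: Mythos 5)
Your overall architecture for (i)--(v) is the same as the paper's (a line with a $\Z$-periodic pattern of asymmetric decorations, varied by a ``swap'' pattern $\epsilon\in\{0,1\}^\Z$), but the proof has a genuine gap exactly where you locate the ``main obstacle'': the construction reconciling $\Isom(X)\cdot B(x,1)=X$ with swaps that are invisible at \emph{every} scale $R$ is announced but never carried out, and it is the entire content of the theorem. Concretely, you need, for each scale $n$, several pairwise non-isometric compact contractible decorations attached along a segment of length $2^n$ of $\R$, each contained in the $2^{-n}$-neighborhood of that segment (so the union over all $n$ and all integer translates stays within distance $1$ of the line and remains proper and contractible), and such that every ball of radius $2^{n-2}$ in one decoration embeds isometrically in each of the others compatibly with the attaching segment. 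The paper does this with six thin rectangles $[0,2^n]\times[0,2^{-n}]$ glued along their common long side, with asymmetric markers (two removed small balls of different radii) near the left end and the distinguishing data (which pairs of right-hand vertical sides are identified) at the right end, so that no ball of radius $2^{n-2}$ sees both; one must also check that each such decoration has trivial isometry group so that $\Isom(X)\simeq\Z$ survives. None of this is routine, and your single-decoration-per-integer picture with ``arms ending in bumps $A$, $B$'' does not produce it; you would also still need to verify that the swap at scale $n$ changes nothing at scales $\le R$, which is why the paper indexes the modifications by $\N\times\Z$ and freezes all coordinates with $2^{n-2}\le R$.

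Separately, your mechanism for (vi) is wrong as stated. If you attach a \emph{small} sphere (diameter $<R$) carrying continuous self-isometries, then any $R$-ball of $Y''$ around a point of that sphere contains the whole sphere together with its attachment; being $R$-locally $X$ then forces $X$ to contain an isometric copy of this configuration, so $\Isom(X)$ would already be uncountable, contradicting (ii). (And a decoration that is genuinely $R$-locally indistinguishable from the rigid bump $A$ while having diameter $<R$ is simply isometric to $A$.) The extra symmetries must be \emph{large-scale discrete} ones: the paper introduces a third decoration $C_n^2$ whose only nontrivial isometry exchanges two isometric arms of length $2^n$ while fixing the attaching segment, and is still locally (at radius $2^{n-2}$) indistinguishable from $C_n^0$; using it at infinitely many positions yields $\Isom(Y'')\supseteq(\Z/2\Z)^{\N}$, which is uncountable without any continuous symmetry.
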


\subsection{Application to Benjamini-Schramm convergence of finite random graphs}

\begin{defn}
A sequence of graphs $(Y_n)$  is called asymptotically $k$-simply connected if for every $l\in \N$, there exists $L\in \N$ such that for $n$ large enough, every loop of length at most $l$ in $Y_n$ bounds a disc of diameter at most $L$ in $P_k(Y_n)$.
\end{defn}

As a corollary of Proposition \ref{prop:characterizationrigid}, we have the following result which says that for a sequence of asymptotically $k$-simply connected graphs to converge in the Benjamini-Schramm topology \cite{BS} to a $k$-simply-connected LG-rigid graph, it is enough that the balls of a fixed radius converge.

This corollary was suggested by Itai Benjamini. 
\begin{prop}\label{cor:BS_convergence}
 Let $X$ be a $k$-simply-connected LG-rigid graph with cocompact group of isometries. There exists $R$ such that the following holds. If $(Y_n)$ is a sequence of finite graphs such that a proportion $1-o(1)$ of the balls of radius $R$ in $Y_n$ are isometric to a ball in $X$, and such that $Y_n$ is asymptotically $k$-simply connected, then for every $R'$, a proportion $1-o(1)$ of the balls of radius $R'$ in $Y_n$ are isometric to a ball in $X$.
\end{prop}
Proposition \ref{cor:BS_convergence} is proved in \S \ref{sec:proofRandom}.

\subsection{Explicit bounds on the LG-rigidity radius?}

The main weakness of all our rigidity results is that they do not provide any explicit estimate for the LG-rigidity radius, i.e.\ the value of $R$ appearing the definition of LG-rigidity. It would be especially interesting to get such estimates for torsion-free lattices in simple Lie groups. In \cite{BE}, it is proved that for  the standard Cayley graph of $\Z^d$, $R=2$ for $d=2$ and $R=3$ for $d\geq 3$. It is interesting to note that this gives a uniform bound for all $d$. More generally, one may ask whether there is a uniform upper bound $R(l)$ for the radius of LG-rigidity that holds for all (torsion-free?) Cayley graphs of $l$-step nilpotent groups. Note that $R(l)$ necessary tends to infinity as the standard Cayley graph of the free nilpotent group  of step $l$ and rank $r$ converges to the Cayley graph of the free group of rank $r$ as $l\to \infty$. This would already be new for the abelian case $l=1$. This  question also makes sense for all torsion free lattices of a given simple Lie group.

Note that by contrast, given a finitely generated group $G$ containing a torsion-free element, and a finite generating set $S$, the proof of Theorem \ref{thm:discrete} gives an constructive way of modifying the Cayley graph $(G,S)$ to obtain a Cayley graph of $G$ with a discrete group of isometries: let's call it the ``rigidified Cayley graph". It would be interesting, starting with an explicit Cayley graph of --say-- $SL_n(\Z)$, to find an explicit bound for the radius of LG-rigidity of its rigidified Cayley graph.

\subsection*{Organization of the paper} The paper is organized as follows. Section \ref{section:preliminaries} and \ref{section:large-scaleSC} contain preliminaries on large scale simple connectedness and the proofs of Propositions  \ref{prop:rigid implies sc}, \ref{prop:characterizationrigid} and \ref{cor:BS_convergence}. Section \ref{section:QT} and \ref{section:USLG} contain our rigidity results for quasi-trees (Theorem \ref{thm:QT}) and graphs with discrete isometry groups (Theorem \ref{thm:mainIntro}) respectively. In Section \ref{section:discrete}, we prove Corollary \ref{thm:lattices}. 
Section \ref{section:counterexampleGraph} contains the proof of a particular case of Theorem \ref{thm:2covering_notNormal2} (namely for direct products); Section contains the proof of the general case, as well as of Theorem  \ref{thm:2covering_Normal2}, using the content of Section \ref{section:discreteisom}. Theorems \ref{prop:discreteIsometriesCayleyGraph} and \ref{thm:discrete} are proved in Section \ref{section:discreteisom}. Finally, the proof of Theorem \ref{thm:graph} is provided in Section  \ref{section:counterexamplemetric}.

\subsection*{Acknowledgement} We are grateful to David Ellis whose questions helped us clarifying various statements and parts of the proofs. We also thank very much the anonymous referee for his careful reading and his numerous and precise comments and suggestions. The research of both authors was supported by the ANR project GAMME (ANR-14-CE25-0004). The first author's research was also supported by the ANR project AGIRA (ANR-16-CE40-0022).

\section{Preliminaries about $k$-simple connectedness}\label{section:preliminaries}

Except for the following paragraph, dealing with the quasi-isometry invariance of large-scale simple connectedness, the following material is not needed in the rest of the paper, but we include it in order to advertise the naturality of the $2$-complex $P_k(X)$ for vertex-transitive graphs.

\subsection{Equivalent definitions of $k$-simple connectedness}


\begin{prop}\label{prop:equivalence_of_lssc} For a graph $X$, the following are equivalent.
  
  \begin{itemize}
  \item $P_k(X)$ is simply connected.
  \item The only graph coverings $f \colon Y \to X$ which are injective on the balls of radius $\frac k 2$ are the isometries. 
  \end{itemize}
  \end{prop}
\begin{proof}
Assume that $P_k(X)$ is simply connected. Let $f \colon Y \to X$ be a graph covering which is injective on balls of radius $\frac k 2$. Since every simple loop of length $\leq k$ is contained in a (closed) ball of radius $\frac k 2$, $f$ induces a covering $P_k(Y) \to P_k(X)$. Since $P_k(X)$ is simply connected, the covering $P_k(Y) \to P_k(X)$ is trivial (bijective), and in particular $f$ is trivial.

Conversely, assume that the only graph coverings $f \colon Y \to X$ which are injective on the balls of radius $\frac k 2$ are the isometries. To prove that $P_k(X)$ is simply connected, we prove that if $Y$ is the $1$-skeleton of the universal cover of $P_k(X)$, then the covering $Y \to X$ is trivial. We actually show that the covering $Y \to X$ is injective on balls of radius $\frac k 2$: indeed, if $x,y$ are two points in a ball of radius $\frac k 2$ of $Y$ which map to the same point in $X$, then we can consider a path of length $\leq k$ joining $x$ and $y$ in $Y$. Its image in $X$ is a simple loop of length $\leq k$, and so by definition of $P_k(X)$ it is homotopic to the trivial loop in $P_k(X)$. This implies that the path we started with in $Y$ is a simple loop, and hence $x=y$. This proves that $f$ is injective on balls of radius $\frac k 2$.
\end{proof}

\subsection{Invariance under quasi-isometry}
Given two constants $C\geq 1$ and $K\geq 0$, a map $f:X\to Y$ between two metric spaces is a $(C,K)$-quasi-isometry if every $y\in Y$ lies at distance $\leq K$ from a point of $f(X)$, and if for all $x,x'\in X$, 
$$C^{-1}d_X(x,x')-K\leq d_Y(f(x),f(x'))\leq Cd_X(x,x')+K.$$

\begin{thm}\label{thm:QIinvariance}
Let $k\in \N^*$, $C\geq 1$, $K\geq 0$ and let $X$ be a $k$-simply connected graph. Then there exists $k'\in \N^*$ such that every graph $Y$ such that there exists a $(C,K)$-quasi-isometry from $X$ to $Y$, is $k'$-simply connected. 
\end{thm}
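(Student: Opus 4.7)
Assume that $X$ is $k$-simply connected (otherwise the statement has no content). I will produce $k'=k'(k,C,K)$ such that every combinatorial loop in $Y$ is null-homotopic in $P_{k'}(Y)$. Fix first a quasi-inverse $g\colon Y\to X$ of the given $(C,K)$-quasi-isometry $f\colon X\to Y$, satisfying $d_Y(f\circ g(y),y)\leq K'$ at every vertex $y\in Y$, where $K' = K'(C,K)$ is obtained by the standard construction (for each $y$, let $g(y)$ be a vertex realizing the $K$-density of $f(X)$ in $Y$).

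Given a loop $\alpha=(y_0,y_1,\dots,y_n=y_0)$ in $Y$, set $x_i := g(y_i)$; then $d_X(x_i,x_{i+1})\leq C+K$. Pick an $X$-geodesic $\sigma_i$ from $x_i$ to $x_{i+1}$, and let $\gamma$ be the concatenation of the $\sigma_i$, a loop in $X$. Since $X$ is $k$-simply connected, $\gamma$ bounds a van Kampen diagram: a finite, simply connected planar polygonal $2$-complex $D$ together with a cellular map $\phi\colon D \to P_k(X)$ sending $\partial D$ to $\gamma$, each $2$-cell of $D$ being an $m$-gon with $m\leq k$.

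Next I push $D$ forward to $Y$. I send each vertex $u$ of $D$ to $f(\phi(u))\in Y$ and each edge $\{u,v\}$ of $D$ to a $Y$-geodesic between $f(\phi(u))$ and $f(\phi(v))$, which has length at most $C+K$. The image of each $m$-gon of $D$ is then a loop in $Y$ of length at most $k(C+K)$, which is null-homotopic in $P_{k_1}(Y)$ for $k_1 := k(C+K)$. Collecting these pushed-forward $2$-cells gives a filling, in $P_{k_1}(Y)$, of the loop $\widetilde\gamma$ obtained as the image of $\partial D$, i.e.\ the concatenation of the pushforwards $\widetilde\sigma_i$ of the $\sigma_i$; each $\widetilde\sigma_i$ has length at most $(C+K)^2$.

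It remains to connect $\widetilde\gamma$ back to $\alpha$. For each $i$ pick a $Y$-geodesic $\tau_i$ of length $\leq K'$ from $y_i$ to $f(x_i) = f(g(y_i))$. The bridge loop obtained by concatenating the edge $(y_i,y_{i+1})$, $\tau_{i+1}$, $\widetilde\sigma_i^{-1}$, and $\tau_i^{-1}$ has length at most $1+2K'+(C+K)^2$, and is therefore null-homotopic in $P_{k_2}(Y)$ for $k_2 := 1+2K'+(C+K)^2$. Setting $k':=\max(k_1,k_2)$, the bridges and the pushforward of $D$ jointly exhibit $\alpha$ as null-homotopic in $P_{k'}(Y)$.

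The only delicate step is the pushforward of the van Kampen diagram: $f$ is not a cellular map, but $D$ is a purely combinatorial disk whose $2$-cells have size $\leq k$, so replacing each edge by a $Y$-geodesic of uniformly bounded length preserves the combinatorial filling structure and produces $2$-cells in $Y$ of controlled size. A minor point, handled by a standard splitting-at-repeated-vertex induction, is that the loops produced by the pushforward or by the bridges need not be \emph{simple}; however any closed walk of length $\leq N$ in $Y$ is still null-homotopic in $P_N(Y)$, so this affects $k'$ only through a bounded factor.
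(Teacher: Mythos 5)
Your proof is correct and is at bottom the same argument as the paper's: the paper reformulates $k$-simple connectedness as the statement that a manifestly QI-invariant equivalence relation on $C$-paths has a single class, and unwinding that reformulation is exactly your pull-back/fill/push-forward of van Kampen diagrams (your version is in fact more explicit than the paper's sketch). The only slip is the bound $d_X(x_i,x_{i+1})\le C+K$: since $g$ is merely a quasi-inverse one only gets $d_X(x_i,x_{i+1})\le C\bigl(1+3K\bigr)$ (or a similar constant depending on $C,K$), but this affects nothing beyond the values of $k_1$ and $k_2$.
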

\begin{proof}
Since this is well-known, we only sketch its proof (which roughly follows the same lines as the proof of \cite[Proposition 6.C.4]{CH}). The strategy roughly consists in showing that simple $k$-connectedness is equivalent to a property that is defined in terms of the metric space $X$, and which will obviously be invariant under quasi-isometries (up to changing $k$).

In the sequel, a path $\gamma$ joining two vertices $x$ to $x'$ in a graph $X$ is a sequence of vertices $(x=\gamma_0,\ldots,\gamma_n=x')$ such that $\gamma_i$ and $\gamma_{i+1}$ are adjacent for all $0\leq i<n$. We consider the equivalence relation $\sim_{k,x,x'}$ between such paths $\gamma=(\gamma_0,\ldots,\gamma_n)$ and $\gamma=(\gamma'_0,\ldots,\gamma'_{n'})$ generated by $\gamma\sim_{k,x,x'}\gamma'$ if they ``differ by at most one 2-cell", i.e.\ if $n=j_1+j_2+j_3$, $n'=j_1+j'_2+j_3$ such that
\begin{itemize}
\item 
$\gamma_i=\gamma'_i$ for all $i\leq j_1$; 
\item $\gamma_{j_1+j_2+i}=\gamma'_{j_1+j_2'+i}$  for all $i\leq j_3$;
\item $j_2+j_2'\leq k$.
\end{itemize}
We leave as an exercice the fact that $P_k(X)$ is simply connected if and only if for all $x,x'$, the equivalence relation $\simeq_{k,x,x'}$ has a single equivalence class. Note that this reformulation allows to work directly in the graph $X$. But it still has the disadvantage that it is defined in terms of combinatorial paths in $X$, based on the notion of adjacent vertices (which does not behave well under quasi-isometries). In order to solve this issue, but at the cost of changing $k$, we now define a more flexible notion of paths in $X$:
given a constant $C>0$, we define a $C$-path in $X$ from $x$ to $x'$ to be a sequence $x=\eta_0, \ldots, \eta_n=x'$ such that $d(\eta_i,\eta_{i+1})\leq C$ for all $0\leq i<n$. Given some $L>0$, we define the equivalence relation $\sim_{C,L,x,x'}$ between $C$-paths joining $x$ to $x'$ to be the equivalence relation generated by the relation 
$\eta \sim_{C,L,x,x'} \eta'$ if there exist non-negative integers $j_1,j_2,j_2'$ and $j_3$ such that, letting  $n$ and $n'$ be  the lengths of respectively $\eta$ and $\eta'$, we have 
\begin{itemize}
\item $n=j_1+j_2+j_3$ and $n'=j_1+j'_2+j_3$;
\item 
$\eta_i=\eta'_i$ for all $i\leq j_1$; 
\item $\eta_{j_1+j_2+i}=\eta'_{j_1+j_2'+i}$ for all $i\leq j_3$; 
\item $j_2+j_2'\leq L$.
\end{itemize}
It is easy to see that  if $X$ is $k$-simply connected, then for every $C$, there exists $L$ only depending on $C$ and $k$ such that for all $x,x'$ the equivalence relation $\sim_{C,L,x,x'}$ has a single equivalence class. Conversely, if for some $C\geq 1$ and $L$, the equivalence relation $\sim_{C,L,x,x'}$ has a single equivalence class for all $x,x'\in X$, then $X$ is $k'$-simply connected for some $k'$ only depending on $C$ and $L$. 
Now the latter condition is designed to be invariant under quasi-isometries, so we are done.
\end{proof}

\subsection{Cayley-Abels graph}\label{sec:Cayley-Abels}

Let $X$ be a locally finite vertex-transitive graph, and let $G$ be its group of isometries. Recall that $G$ is locally compact for the compact open topology.  Given some vertex $v_0$, denote by $K$ the stabilizer of $v_0$ in $G$: this is a compact open subgroup. Let $S$ be the subset of $G$ sending $v_0$ to its neighbors. One checks that $S$ is a compact open symmetric generating subset of $G$ and that $S$ is bi-$K$-invariant: $S=KSK$.  It follows that the Cayley graph $(G,S)$ is invariant under the action of $K$ by right translations, and that $X$ naturally identifies to the quotient of $(G,S)$  under this action. Conversely, given a totally disconnected, compactly generated, locally compact group $G$, one can construct a locally finite graph on which $G$ acts continuously, properly,  and vertex-transitively. To do so, just pick a compact open subgroup $K$ and a compact symmetric generating set $T$, define $S=KTK$ and consider as above the quotient of the  Cayley graph $(G,S)$ by the action of $K$ by right translations (note that the vertex set is just $G/K$). 
This construction, known as the  Cayley-Abels graph $(G,K,S)$ of $G$ with respect to $S$ and $K$ generalizes the more classical notion of Cayley graph, which corresponds to the case where $K=1$ (and $G$ is discrete).

\subsection{Cayley-Abels $2$-complex}

We start by recalling some basic facts about group presentation and presentation complex for abstract groups (not necessarily finitely generated).  
Let $G$ be a group, and let $S$ be a symmetric generating subset of $G$. We consider the Cayley graph $(G,S)$ as a graph whose edges are labelled by elements of $S$.
Let $R$ be a subset of the kernel of the epimorphism $\phi: F_S\to G$. Consider the polygonal 2-complex $X=X(G,S,R)$, whose 1-skeleton is the Cayley graph $(G,S)$, and where a $k$-gon is attached to every $k$-loop labeled by an element of $R$. It is well-known that $X$ is simply-connected if and only if the normal subgroup generated by $R$ is $\ker \phi$. In this case,  $\langle S; R \rangle$ defines a presentation of $G$, and $X$ is called the Cayley 2-complex associated to this presentation.

The proof of this statement extends without change to the following slightly more general setting: assume that $K$ is a subgroup of $G$ such that $S=KSK$, and consider the Cayley-Abels graph $(G,K,S)$. Let $v_0$ be the vertex corresponding to $K$ in $(G,K,S)$. 

 Consider the polygonal 2-complex $X=X(G,S,R)$, whose 1-skeleton is the Cayley-Abels graph $(G,K,S)$, and where a $k$-gon is attached to every $k$-loop which is obtained as the projection in $X$ of a $k$-loop labelled by some element of $R$ in $(G,S)$. Once again, one checks $X$ is simply-connected if and only if $R$ generates $\ker \phi$. In this case,  $\langle S; R \rangle$ defines a presentation of $G$, and we call $X$ the Cayley-Abels 2-complex associated to this presentation.

\subsection{Compact presentability and $k$-simple connectedness}

Recall that a locally compact group is compactly presentable if it admits a presentation $\langle S; R \rangle$, where $S$ is a compact generating subset of $G$, and $R$ is a set of words in $S$ of length bounded by some constant $k$. Now let $K$ be a compact open subgroup and let $S$ be a such that $S=KSK$. We deduce from the previous paragraph that the morphism $\langle S; R \rangle\to G$ is an isomorphism if and only if the Cayley-Abels graph $(G,K,S)$ is $k$-simply connected.

\section{Large-scale simple connectedness and LG-rigidity}\label{section:large-scaleSC}

This section is dedicated to the proofs of the rather straightforward Propositions \ref{prop:rigid implies sc} and \ref{prop:characterizationrigid}. It can be skipped by the reader only interested in our main results.

\subsection{Proof of Proposition \ref{prop:rigid implies sc} }

\begin{lem}
Let $X$ and $Z$ be two graphs, and let $R\geq 1$. Assume that $X$ is vertex-transitive,  that $Z$ is $R$-locally $X$, and that $p:Z\to X$ is a covering map. Then $p$ is an isometry in restriction to balls of radius $R$.
\end{lem}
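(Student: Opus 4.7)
The strategy is to show that $p$ restricts to a graph isomorphism between the induced subgraph on $B_Z(z,R)$ and the induced subgraph on $B_X(p(z),R)$; since the intrinsic geodesic metric is determined by the subgraph structure, this gives the desired isometry.

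First I would use the two basic properties of the covering map $p$. On the one hand, $p$ is a graph homomorphism, hence $1$-Lipschitz, so $p(B_Z(z,R))\subseteq B_X(p(z),R)$ and the image of every edge inside $B_Z(z,R)$ is an edge inside $B_X(p(z),R)$. On the other hand, $p$ has the path-lifting property: any geodesic path of length $\leq R$ in $X$ starting at $p(z)$ lifts to a path of the same length in $Z$ starting at $z$. This immediately shows that $p\colon B_Z(z,R)\to B_X(p(z),R)$ is surjective on vertices.

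Next I would bring in the hypotheses. Since $Z$ is $R$-locally $X$ and $X$ is vertex-transitive, the induced subgraphs $B_Z(z,R)$ and $B_X(p(z),R)$ are isomorphic as graphs (via the isometry provided by the $R$-local condition, postcomposed with an isometry of $X$ moving the reference vertex to $p(z)$). In particular they have the same number of vertices and the same number of edges. Combined with the surjectivity of the previous step (and local finiteness, so all cardinalities are finite), this forces $p$ to be a bijection on vertices of the two balls.

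Finally I would promote this to an edge bijection. Because $p$ is injective on $B_Z(z,R)$ and sends edges of the induced subgraph $B_Z(z,R)$ to edges of the induced subgraph $B_X(p(z),R)$, the induced map on edge sets is injective; since both edge sets have the same finite cardinality, it is bijective. Hence the restriction of $p$ is a graph isomorphism between the induced subgraphs $B_Z(z,R)$ and $B_X(p(z),R)$, and therefore preserves the intrinsic geodesic metrics. The main (minor) point to watch out for is being clear that ``intrinsic geodesic metric on a ball'' refers to distances computed inside the induced subgraph rather than the ambient graph; once that is pinned down, no further obstacle remains.
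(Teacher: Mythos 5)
Your proposal is correct and follows essentially the same route as the paper: surjectivity of $p$ on balls (from the covering property), equal cardinality of $B_Z(z,R)$ and $B_X(p(z),R)$ (from the $R$-locally $X$ hypothesis plus vertex-transitivity), hence injectivity, hence an isomorphism. The paper's proof stops at injectivity and says ``we are done''; your explicit edge-counting step just fills in that last implicit detail.
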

\begin{proof}
Since $p$ is a covering map, for all $z\in Z$, $p(B(z,R))=B(p(z),R)$. Since $Z$ is $R$-locally $X$, $B(z,R)$ is isometric to some ball of radius $R$ in $X$, and hence (since $X$ is vertex-transitive) to $B(p(z),R)$. In particular $B(z,R)$ and $B(p(z),R)$ have same cardinality, which implies that $p$ must be injective in restriction to $B(z,R)$. Hence we are done.
\end{proof}
We obtain as an immediate corollary: 
\begin{cor}\label{cor:selfcover}
Let $X$ be a vertex-transitive graph. Every self-covering map $p:X\to X$ is an automorphism.
\end{cor}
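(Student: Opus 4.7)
The plan is to combine the preceding lemma (applied at every scale $R$) with the fact that covering maps of connected graphs are surjective, to conclude that $p$ is a bijective isometry.

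First I would note that $X$ is trivially $R$-locally $X$ for every $R \geq 1$, so the hypotheses of the lemma are satisfied by the self-covering $p : X \to X$ for each $R$. Applying the lemma therefore gives that $p$ is an isometry in restriction to every ball $B(x,R)$ in $X$, for every $R \geq 1$.

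Next I would promote this to a global statement. Given any two vertices $x,y \in X$, choose $R > d(x,y)$; then $y \in B(x,R)$, and since $p$ is an isometry on $B(x,R)$ we obtain $d(p(x),p(y)) = d(x,y)$. Hence $p$ is a distance-preserving map of $X$ to itself. In particular $p$ is injective (distinct vertices have distance at least $1$, so their images are distinct).

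Finally, since $X$ is connected and non-empty, the covering map $p$ is surjective. Combined with injectivity, this makes $p$ a bijection; combined with the isometry property, it makes $p$ an automorphism of $X$. There is no real obstacle here: the whole content of the corollary is already packaged in the preceding lemma, and the only thing to observe is that a local isometry at all scales on a connected graph is a global isometry, so that a covering map which is also an isometric embedding must be an automorphism.
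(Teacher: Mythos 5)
Your proof is correct and follows essentially the same route as the paper: the paper also deduces the corollary immediately from the preceding lemma by taking $Z=X$ (which is trivially $R$-locally $X$ for every $R$), so that $p$ is injective on balls of every radius, hence globally injective, and surjective because it is a covering of a connected graph. The only detail worth keeping in mind is that the lemma's ``isometry on balls'' refers to the intrinsic metric of the balls, but since a geodesic between two points of $B(x,R)$ at distance at most $R$ stays inside $B(x,R)$, your passage to a global isometry is fine.
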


Let us turn to the proof of the proposition. Let $p \colon Y \to X$ be a covering which is injective on balls of radius $R$. In particular $Y$ is $R$-locally $X$. Hence we have a covering map $q:X\to Y$. By Corollary \ref{cor:selfcover}, $q\circ p$ is an automorphism, implying that $p$ is injective and therefore is a graph isomorphism. By Proposition \ref{prop:equivalence_of_lssc} $X$ is $2R$-simply connected, so we are done.


\subsection{Proofs of Proposition \ref{prop:characterizationrigid}, Corollary \ref{cor:Y=cayley} and Proposition \ref{cor:BS_convergence}}\label{sec:proofRandom}

Since $X$ has a cocompact group of isometries, there are only finitely many orbits of vertices. Therefore, since  $P_k(X)$ is simply connected, for all $R_1\in \N$ there exists $R_2\in \N$ such that every loop in $X$ based at some vertex $x$ and contained in $B(x,R_1)$ can be filled in inside $P_k(B(x,R_2))\subset P_k(X).$ It turns out that Proposition \ref{prop:characterizationrigid}  can be derived from a more general statement, which requires the following definition (which is a variant of Gromov's filling function \cite{Gr}).
\begin{defn}
We define the $k$-Filling function of a graph $X$ as follows: for every $R_1>0$, $\Fill_X^k(R_1)$ is the infimum over all $R_2 \geq R_1$ such that  every loop based at some vertex $x\in X$ and contained in $B(x,R_1)$ is homotopic to the
constant loop inside $P_k(B(x,R_2))$.
\end{defn}
Note that even if $X$ is $k$-simply connected but does not have a cocompact isometry group, $\Fill_X^k$ can  take infinite values\footnote{It is easy to come up with an example, considering a chain of bottle-shaped graphs with bottleneck size tending to infinity at lower speed than the core of the bottle. We leave the details to the reader.}. Proposition \ref{prop:characterizationrigid}  is now a corollary of
\begin{prop}\label{prop:characterizationrigidbis}
Let $X$ be a $k$-simply connected graph with finite $k$-Filling function.  Then $X$ is LG-rigid if and only if there exists $R$ such that every $k$-simply connected graph which is $R$-locally $X$ is isometric to it. Similarly, $X$ is (U)SLG-rigid if for all large enough $r>0$ there exists $R\geq r$ such that the following holds. For every $k$-simply connected graph $Y$ which is $R$-locally $X$,  and every isometry $f$ from a ball $B(x,R)$ in $X$ to a ball $B(y,R)$ in $Y$, the restriction of $f$ to $B(x,r)$ extends (uniquely) to an isometry from $X$ to $Y$.
\end{prop}
For this proposition we shall use the following notion, already used in Proposition \ref{prop:equivalence_of_lssc}. If $X$ is a graph and $k \in \N$, the \emph{$k$-universal cover of $X$} is the $1$-skeleton of the universal cover of $P_k(X)$. For example, if $X$ is a Cayley graph $(G,S)$, then the $k$-universal cover of $X$ is the Cayley graph $(\widetilde G,S)$ where $\widetilde G$ is given by the presentation $\langle S|R\rangle$, with $R$ the words of length at most $k$ that are trivial in $G$.
\begin{lem}\label{lem:Z is k-simply connected}
Let $X$ be a graph and $k\in \N$. The $k$-universal cover of $X$ is $k$-simply connected.
\end{lem}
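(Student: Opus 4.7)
Write $\widetilde{P}$ for the universal cover of $P_k(X)$ and $\widetilde{X}$ for its $1$-skeleton, so by definition $\widetilde{X}$ is the $k$-universal cover of $X$. The plan is to compare the two $2$-complexes $\widetilde{P}$ and $P_k(\widetilde{X})$: both have $1$-skeleton $\widetilde{X}$, and I will show that every $2$-cell of $\widetilde{P}$ is already a $2$-cell of $P_k(\widetilde{X})$. Once this is done, $P_k(\widetilde X)$ is obtained from $\widetilde P$ by attaching additional $2$-cells along loops in $\widetilde X$. Since $\widetilde{P}$ is simply connected (as a universal cover) and attaching further $2$-cells to a simply connected complex preserves simple connectedness, this gives $\pi_1(P_k(\widetilde X))=1$, which is exactly the conclusion.

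The only real content is to identify the $2$-cells of $\widetilde P$ with $m$-gons ($m\le k$) of $P_k(\widetilde X)$, i.e.\ with simple loops of length $\le k$ in $\widetilde X$. By construction each $2$-cell of $P_k(X)$ is attached along a simple loop $c$ of length $m\le k$ in $X$, and a $2$-cell of $\widetilde P$ is a lift of such a cell. Its attaching map is a lift of $c$ to $\widetilde X$; because the disk is simply connected, this lift is a loop $\widetilde c$ of the same length $m$ in $\widetilde X$. I would then argue that $\widetilde c$ is itself a simple loop: the covering map $p:\widetilde X\to X$ is a local isometry on the $1$-skeleton, so if $\widetilde c(s)=\widetilde c(t)$ for two distinct values $s\ne t\pmod m$, then $c(s)=c(t)$, contradicting the simplicity of $c$. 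Hence $\widetilde c$ defines an $m$-gon of $P_k(\widetilde X)$, as required.

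The step I expect to require the most care is the previous one, namely verifying that lifting a simple loop in $X$ through the covering $\widetilde P\to P_k(X)$ produces a simple loop in $\widetilde X$ (and that one does obtain a loop, not just a path, but this is automatic because every $2$-cell is simply connected so its boundary lifts as a loop). Once that is in place, the conclusion follows formally: $P_k(\widetilde X)$ has $\widetilde X$ as its $1$-skeleton, contains every $2$-cell of $\widetilde P$, and possibly more $2$-cells, so $\pi_1(P_k(\widetilde X))$ is a quotient of $\pi_1(\widetilde P)=1$, hence trivial. This yields that $\widetilde X$ is $k$-simply connected.
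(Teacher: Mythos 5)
Your proof is correct and follows the same route as the paper: both identify the $2$-cells of the universal cover $\widetilde P$ of $P_k(X)$ as $m$-gons ($m\le k$) attached along simple loops in the $1$-skeleton $\widetilde X$, conclude that $P_k(\widetilde X)$ is obtained from $\widetilde P$ by attaching further $2$-cells, and deduce simple connectedness. You merely spell out the lifting argument (that the attaching loops lift to simple loops of the same length) that the paper states as an observation.
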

\begin{proof}
Let $Q$ be the universal cover of $P_k(X)$ and $Z$ its $1$-skeleton, \emph{i.e.} the $k$-universal cover of $X$. Observe that the 2-cells of $Q$ consist of $m$-gons for some $m\leq k$, that are attached to simple loops of length $m$ in $Z$. Hence $P_{k}(Z)$ is obtained from $Q$ by possibly attaching more $2$-cells. It follows that $P_{k}(Z)$ is simply connected.
\end{proof}

We shall need the following lemma as well.
\begin{lem}\label{lem:covering_injective} Let $X$ be $k$-simply connected graph with finite $k$-Filling function. For every $R_1>0$, there exists $R_2$ such that if a graph is $R_2$-locally $X$ then its $k$-universal cover is $R_1$-locally $X$.
\end{lem}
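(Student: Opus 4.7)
The plan is to choose $R_2$ so that the covering map $p:\tilde Y\to Y$ from the $k$-universal cover is injective on every ball of radius $R_1+1$ in $\tilde Y$. Once this injectivity radius bound is established, the restriction of $p$ to $B_{\tilde Y}(\tilde y, R_1)$ is a bijection onto $B_Y(p(\tilde y), R_1)$ preserving adjacency in both directions: surjectivity comes from lifting geodesics, injectivity is given, and adjacency is preserved because any edge in $Y$ between two points of the image lifts uniquely to an edge in $\tilde Y$ whose endpoints lie in $B_{\tilde Y}(\tilde y, R_1+1)$, where injectivity forces them to be the prescribed preimages. Composing this graph isomorphism with the intrinsic isometry $B_Y(p(\tilde y),R_2)\simeq B_X(x,R_2)$ provided by the $R_2$-local hypothesis and restricting to the ball of radius $R_1$ inside yields that $\tilde Y$ is $R_1$-locally $X$.

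To reduce the injectivity radius bound to a filling property, I use the standard observation: $p$ is injective on every $B(\tilde y, R_1+1)$ provided that every loop in $Y$ of length at most $2(R_1+1)$ is null-homotopic in $P_k(Y)$. Indeed, if $\tilde z_1\ne \tilde z_2$ in $B(\tilde y,R_1+1)$ satisfy $p(\tilde z_1)=p(\tilde z_2)$, then concatenating paths of length $\le R_1+1$ from $\tilde y$ to these two points and projecting yields a loop of length $\le 2(R_1+1)$ in $Y$ that lifts to a non-closed path in $\tilde Y$, hence is not null-homotopic in $P_k(Y)$.

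I therefore set $R_2:=\Fill_X^k(R_1+1)$. Any loop $\gamma$ in $Y$ based at a vertex $y$, of length at most $2(R_1+1)$, sits in $B_Y(y,R_1+1)$. Using the intrinsic isometry $\phi:B_Y(y,R_2)\to B_X(x,R_2)$, transport $\gamma$ to $\phi(\gamma)\subset B_X(x,R_1+1)$. By the very definition of $\Fill_X^k$, the loop $\phi(\gamma)$ admits a filling by $2$-cells of $P_k(B_X(x,R_2))$, namely by simple loops of length $\le k$ inside $B_X(x,R_2)$. Pushing each such $2$-cell back through $\phi^{-1}$ produces a simple loop of the same length in $B_Y(y,R_2)\subset Y$, which is a $2$-cell of $P_k(Y)$, so the filling transports and $\gamma$ is null-homotopic in $P_k(Y)$, as desired. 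The only real bookkeeping is to verify that this transport of $2$-cells is well defined, but this is immediate from the fact that an isometry of balls for their intrinsic geodesic metrics is a graph isomorphism of the induced subgraphs, and hence sends simple loops to simple loops of the same length.
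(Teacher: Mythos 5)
Your proof is correct and follows essentially the same route as the paper: set $R_2$ according to the filling function, transport a short loop of $Y$ into $X$ via the local isometry, fill it there, pull the filling back into $P_k(Y)$, and conclude injectivity of the covering on small balls from the universal-cover property. The only difference is that you work at radius $R_1+1$ so as to spell out why injectivity yields a graph isomorphism on balls of radius $R_1$, a point the paper leaves implicit.
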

\begin{proof} Let $R_1>0$. Take $R_2>\Fill_X^k(R_1)$, and assume that a graph $Y$ is $R_2$-locally $X$. Let $p \colon Z \to Y$ be its $k$-universal cover. We claim  that $p$ is injective in restriction to balls of radius $R_1$: this implies that $Z$ is $R_1$-locally $Y$, and hence $R_1$-locally $X$ because $R_2 \geq R_1$, and we are done. Indeed, let $y\in Y$, and $z\in Z$ such that $p(z)=y$. Now let $z_1$ and $z_2$ be two elements of $B(z,R_1)$ such that $p(z_1)=p(z_2)=y'$. We let $\gamma_1$ and $\gamma_2$ be two geodesic paths joining $z$ respectively to $z_1$ and $z_2$, and we let $\bar{\gamma}_1$, and  $\bar{\gamma}_2$ be the corresponding paths in $Y$, both joining $y$ to $y'$.
The concatenation of $\bar{\gamma}_1$ with the inverse of $\bar{\gamma}_2$ defines a loop $\alpha$ based at $y$ and contained in $B(y,R_1)$. But since $Y$ is $R_2$-locally $X$, $\alpha$ can be filled in inside $P_k(B(y,R_2))$, and in particular inside $P_k(Y)$. From the assumption that $p\colon z \to Y$ is the $k$-universal cover, we deduce that $z_1=z_2$. Hence the claim is proved. 
\end{proof} 


\begin{proof}[Proof of Proposition \ref{prop:characterizationrigidbis}] We shall only prove the first statement, the two other ones being very similar. Let us assume first that $X$ is $R$-LG rigid for $R\geq k/2$, and let $Y$ be $k$-simply connected and $R$-locally $X$. Then $Y$ is covered by $X$, and it follows from Proposition  \ref{prop:equivalence_of_lssc} that this covering map is an isometry. This proves the first implication.

Let us turn to the (more subtle) converse implication. Assume that $X$ is $k$-simply connected, and that there exists $R$ such that the following holds: every $k$-simply connected graph which is $R$-locally $X$ is isometric to it.
Let $R_1=R$, and let $R_2$ as in Lemma \ref{lem:covering_injective}. If $Y$ is $R_2$-locally $X$, then its $k$-universal cover is $R$-locally $X$, and hence is isometric to $X$. This gives a covering $X \to Y$ and concludes the proof.
\end{proof}


\begin{lem}\label{lem:asymptSC}
Let $(Y_n,o_n)$ be a sequence of graphs locally converging to some graph $(Y,o)$: i.e.\ such that for all $r$, the ball $B(o_n,r)$ is isometric to $B(o,r)$ for all $n$ large enough. Assume that the sequence $Y_n$ is asymptotically $k$-simply connected, then $Y$ is $k$-simply connected.
\end{lem}
\begin{proof}
Let $\gamma$ be a loop of length $l$ in $Y$. Let $L$ be as in the definition of asymptotic $k$-simple connectedness. The loop $\gamma$ is contained in $B(o,r)$ for some $r$. Now for $n$ large enough the ball of radius $B(o_n,2L)$ in $P_k(Y_n)$ is isometric to $B(o,2L)$ in $P_k(Y)$, hence we deduce that there is a disc (of diameter at most $L$) bounded by $\gamma$ in $P_k(Y)$.  
\end{proof}

\medskip

Let us prove Corollary \ref{cor:Y=cayley}. Let $X=(G,S)$ be the Cayley graph of a finitely presented group. Let $k \in \N$ be such that $X$ is $k$-simply connected. Observe that the number of isometry classes of Cayley graphs $Z=(H,S')$ where $H$ is given by a presentation $\langle S',R\rangle$ with $|S|=|S'|$ and with relations of length at most $k$ is bounded by a function of $|S|$ and $k$. Hence, it follows from an easy compactness argument that for $R_1$ large enough, if such a $Z$ is $R_1$-locally $X$, it is isometric to $X$.

Let $R_2>0$ be given by Lemma \ref{lem:covering_injective} for $X$. Let $Y=(H_0,S')$ be a Cayley graph $R_2$-locally $X$. Then its $k$-universal cover is $R_1$-locally $X$, and is the Cayley graph $(H,S')$ for the group $H$ given by the presentation $\langle S' | R\rangle$ where $R$ is the set of words of length less than $k$ that are trivial in $H_0$. It is therefore isometric to $X$. This implies that $X$ covers $Y$ and proves Corollary \ref{cor:Y=cayley}.

\medskip

We now prove Proposition \ref{cor:BS_convergence}. By Proposition \ref{prop:characterizationrigid} there exists $R \geq 2$ such that every $k$-simply connected graph which is $R$-locally $X$ is isometric to $X$. We prove Proposition \ref{cor:BS_convergence} for this $R$. Let $B_n \subset Y_n$ denote the set of (bad) vertices $y \in Y_n$ such that $B(y,R)$ is not isometric to a ball in $X$. If $d$ is the maximum degree of a vertex in $X$, then the set of points at distance $1$ from $B_n$ has size at most $d|B_n|$, because every such point has a neighbour in $B_n$, and (because $R\geq 2$) this neighbour has degree at most $d$. By the same argument, the cardinality of the set of points in $Y_n$ at distance at most $r$ from $B_n$ is bounded above by $|B_n| (1+r d^r)$. In particular there exists a sequence $r_n$ going to infinity such that, for a proportion $1-o(1)$ of the vertices in $y \in Y_n$, $B(y,r_n)$ does not intersect $B_n$. Denote by $C_n$ the set of all such vertices. We claim that for every $R'>0$, there exists $n(R')$ such that $B(y,R')$ is isometric to a ball in $X$ for every $y \in C_n$ and every $n \geq n(R')$. If this was not true, we could find a sequence $n_\alpha$ going to infinity, a vertex $y_\alpha \in C_{n_\alpha}$ such that $B(y_\alpha,R')$ is constant and different from a ball in $X$. By extracting a subsequence, we can even assume that $B(y_\alpha,R'')$ converges for every $R'' > R'$, to the ball of radius $R''$ around $y$ of some graph that we denote by $Y$. By Lemma \ref{lem:asymptSC}, $Y$ is $k$-simply connected as a limit of asymptotically $k$-simply connected graphs. Also, $Y$ is $R$-locally $X$ because every ball $B(y',R) \subset Y$ is isometric to a ball $B(y'_\alpha,R)$ around a point $y'_\alpha$ at distance at most $d(y,y')$ from $y_\alpha$ for infinitely many $\alpha$'s; in particular, taking $\alpha$ such that $r_{n_\alpha} \geq d(y,y')$, this ball is isometric to a ball of radius $R$ in $X$ by the definition of $C_n$. Therefore $Y$ is isometric to $X$, and in particular $B(y,R')$ (which coincides with $B(y_\alpha,R')$ for every $\alpha$) is isometric to a ball in $X$. This is a contradiction, and proves the Proposition.

\medskip

We end this section with the proof of a result stated in the introduction.

  \begin{prop}\label{prop:LG<=>SLG} Let $X$ be a graph with a cocompact group of isometries. If $X$ is LG-rigid, then it is SLG-rigid: i.e.\ for every $r>0$ there is $R>0$ such that, if $Y$ is R-locally $X$, for every isometry $f$ from a ball $B(x,R)$ in $X$ to a ball $B(y,R)$ in $Y$, the restriction of $f$ to $B(x,r)$ extends to a covering from $X$ to $Y$.
  \end{prop}
  \begin{proof}
    Take $X$ as in the Proposition and $r>0$. By Proposition \ref{prop:rigid implies sc} there is $k$ such that $X$ is $k$-simply connected. Since $X$ has a cocompact isometry group, there is $R_1$ such that for every $x\in X$, the restriction to $B_X(x,r)$ of an isometry $f\colon B_X(x,R_1)\to X$ coincides with the restriction of an element of $\Isom(X)$ (see Lemma \ref{lem:cocompact}). By enlarging $R_1$, we can assume that $X$ is LG-rigid at scale $R_1$. Let $R_2$ be given by Lemma \ref{lem:covering_injective} for $X$. We prove the conclusion of the Proposition with $R=R_2$.

Let $Y$ be a graph $R_2$-locally $X$ and $f\colon B(x,R_2) \to B(y,R_2)$ be an isometry. Consider $Z$, the $k$-universal cover of $Y$. By Lemma \ref{lem:covering_injective} $Z$ is $R_1$-locally $X$ and $f$ lifts to an isometry $\tilde f \colon B(x,R_1) \to B(z,R_1)$. Moreover as in the proof of Proposition \ref{prop:characterizationrigidbis}, $Z$ is isometric to $X$. By our choice of $R_1$, the restriction of $\tilde f$ to $B(x,r)$ extends to an isometry $X \to Z$. The composition with the covering map $Z \to Y$ gives the required covering extending the restriction of $f$ to $B(x,r)$.
  \end{proof}

\section{The case of quasi-trees: proof of Theorem \ref{thm:QT}}\label{section:QT}

We start with an elementary general Lemma.
\begin{lem}\label{lem:cocompact} Let $X$ be a graph with cocompact isometry group. Given some $r\geq 0$, there exists $r_2$ such that~:
\begin{itemize} 
\item for every $x\in X$, the restriction to $B_X(x,r)$ of an isometry $f\colon B_X(x,r_2)\to X$ coincides with the restriction of an element of $\Isom(X)$.
\item if $R > r_2$ and if $Y$ is $R$-locally $X$ and $x\in X$, then the restriction to $B_X(x,r)$ of an isometry $f\colon B_X(x,r_2)\to Y$ coincides with the restriction of an isometry $B_X(x,R)\to Y$.
\item if $R>r_2+1$ and if $Y$ is $R$-locally $X$, then every covering $X \to Y$ is injective on balls of radius $R$.
\end{itemize}
\end{lem}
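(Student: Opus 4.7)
The plan is to prove the first bullet by a compactness/diagonal argument, and to deduce the second bullet from it. The main idea for the first bullet is to negate the conclusion, extract a sequence of counterexample isometries $f_n$ whose domains exhaust $X$, diagonally extract a limit isometric embedding $\phi\colon X\to X$, and use cocompactness together with a cardinality count to force $\phi$ to be a global isometry, which will contradict the hypothesis.

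For the first bullet, suppose no $r_2$ works for a given $r$: then there exist $r_2^{(n)}\to\infty$, vertices $x_n\in X$, and isometries $f_n\colon B_X(x_n,r_2^{(n)})\to X$ whose restrictions to $B_X(x_n,r)$ do not coincide with the restriction of any element of $\Isom(X)$. Since $\Isom(X)$ has only finitely many vertex orbits, I may first pre-compose each $f_n$ with an element of $\Isom(X)$ to place $x_n$ in a fixed finite set of orbit representatives, and then post-compose with another isometry to do the same for $f_n(x_n)$; both operations preserve the non-extendability property, and after passing to a subsequence both points stabilize to fixed vertices $x_0$ and $y_0$. Local finiteness of $X$ then implies that for each $k$ the restriction $f_n|_{B_X(x_0,k)}$ (defined once $r_2^{(n)}\geq k$) takes only finitely many values, so a standard diagonal extraction produces an isometric embedding $\phi\colon X\to X$ with $\phi(x_0)=y_0$ that agrees with $f_n$ on $B_X(x_0,k)$ for all $n$ large enough. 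The key technical step --- which I expect to be the chief subtlety --- is to conclude that $\phi$ is surjective, and therefore lies in $\Isom(X)$. In the vertex-transitive case this is immediate: the equality $|B_X(x_0,k)|=|B_X(y_0,k)|$ forces the distance-preserving injection $\phi\colon B_X(x_0,k)\hookrightarrow B_X(y_0,k)$ to be a bijection for every $k$, so $\phi(X)=X$. In the general cocompact case the same conclusion is obtained by treating the finitely many isomorphism types of pointed balls $B_X(\cdot,k)$ separately, or by reducing to the vertex-transitive setting via the Cayley-Abels graph of $\Isom(X)$. The resulting $\phi\in\Isom(X)$ agrees with $f_n$ on $B_X(x_0,r)$ for large $n$, contradicting the non-extendability assumption.

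For the second bullet I take $r_2$ as provided by the first. Given $R>r_2$, a graph $Y$ that is $R$-locally $X$, and an isometry $f\colon B_X(x,r_2)\to Y$, set $y=f(x)$; the local model hypothesis furnishes some $z\in X$ and an isometry $\psi\colon B_X(z,R)\to B_Y(y,R)$. Because $r_2<R$, the composite $g:=\psi^{-1}\circ f\colon B_X(x,r_2)\to X$ is a well-defined isometric embedding with $g(x)=z$, and the first bullet applied to $g$ yields $h\in\Isom(X)$ with $h|_{B_X(x,r)}=g|_{B_X(x,r)}$. In particular $h(x)=z$, so $\psi\circ h|_{B_X(x,R)}$ is a well-defined isometry $B_X(x,R)\to Y$ whose restriction to $B_X(x,r)$ equals $\psi\circ g|_{B_X(x,r)}=f|_{B_X(x,r)}$, as required.
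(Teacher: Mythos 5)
Your overall strategy is exactly the paper's: reduce to finitely many basepoints using cocompactness, extract a pointwise limit of a putative sequence of counterexamples by local finiteness, and argue that the limit lies in $\Isom(X)$; the second bullet is then deduced from the first by composing with a local chart coming from the $R$-locally-$X$ hypothesis. In fact you are more careful than the paper, whose proof simply asserts that ``$f=\lim_n f_n$ is a well-defined isometry of $X$'' without addressing surjectivity of the limit.

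That surjectivity step is precisely where your argument is still soft. Your ball-counting works in the vertex-transitive case, but in the general cocompact case the limit $\phi$ may send $x_0$ to a vertex $y_0$ in a \emph{different} orbit, and then $|B(x_0,k)|$ and $|B(y_0,k)|$ need not agree (they only satisfy $|B(x_0,k)|\le|B(y_0,k)|$), so ``treating the finitely many isomorphism types of pointed balls separately'' does not directly close the argument; nor is it clear how a distance-preserving self-map of $X$ would descend to the Cayley--Abels graph of $\Isom(X)$, which is a different graph. A correct completion is to iterate: the orbits of $x_0,\phi(x_0),\phi^2(x_0),\dots$ range over a finite set, so some orbit repeats, say at times $j<j'$; since $|B(\phi^i(x_0),k)|$ is nondecreasing in $i$ for each $k$ and returns to its initial value, all these cardinalities coincide, whence $\phi^{j'-j}$ maps $B(\phi^j(x_0),k)$ \emph{onto} $B(\phi^{j'}(x_0),k)$ for every $k$ and is therefore surjective; writing $\phi^{j'-j}=\phi\circ\phi^{j'-j-1}$ shows $\phi$ itself is surjective. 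With this inserted, your proof of the first bullet is complete (and fills a gap the paper leaves implicit). One further minor imprecision in your second bullet: the isometry $\psi\colon B_X(z,R)\to B_Y(y,R)$ provided by the $R$-locally-$X$ hypothesis is not guaranteed to send $z$ to $y$, so the identity $g(x)=z$ and the containment $h(B_X(x,R))\subseteq B_X(z,R)$ require a word of justification (e.g.\ replacing $z$ by $\psi^{-1}(y)$ and shrinking the radius accordingly); the paper offers no detail here either, dismissing the second bullet in one sentence.
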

\begin{proof} By the assumption that $\Isom(X)$ acts cocompactly there are finitely many orbits of vertices. Pick a finite set $A$ which intersects each of these orbits.

  For the first statement we proceed by contradiction~: if this was not true, there would exist a sequence of isometries $f_n \colon B_X(x_n,n) \to X$ such that $f_n$ does never coincide on $B_X(x_n,r)$ with an element of $\Isom(X)$. By composing by suitable elements of $\Isom(X)$, we can assume that $x_n$ and $f_n(x_n)$ belong to $A$. So every $y \in X$ belongs to $B(x_n,n)$ for all $n$ large enough (say $n \geq n_y$). By taking a subsequence (diagonal argument) we can assume that the sequence $(f_n(y))_{n \geq n_y}$ is eventually stationary, for every $y$. Then $f(y)=\lim_n f_n(y)$ is a well-defined isometry of $X$, a contradition.

The second statement follows from the first. Indeed, assume that $R>r_2$ and that $Y$ is $R$-locally $X$. If $f\colon B_X(x,r_2)\to Y$ is an isometry, consider an isometry $g \colon B_Y(f(x),R) \to X$ and apply the first part of the Lemma to $g \circ f$~: there is an isometry $h$ of $X$ such that $g \circ f$ coincides with $h$ on $B_X(x,r)$. In particular $g^{-1} \circ h$, which is defined on $B_X(x,R)$, is an isometry $B_X(x,R) \to B_Y(f(x),R)$ which coincides with $f$ on $B_X(x,r)$.

For the third statement, consider a covering $p \colon X \to Y$. We have to prove that $B$, the set of all vertices $x \in X$ such that $p$ is injective on $B(x,R)$, is equal to $X$. We shall prove that (1) $B$ is nonempty and (2) if $x \in B$, then every neighbor of $x$ belongs to $B$. This will indeed imply that $B=X$ because $X$ is connected by convention.

First observe that, since  $p$ maps balls of radius $R$ onto balls of radius $R$, a vertex $x \in X$ belongs to $B$ if and only if $B(x,R)$ and $B(p(x),R)$ have the same cardinality.

To prove (1), consider $x_0 \in X$ a vertex minimizing the number of vertices in $B(x_0,R)$. $Y$ being $R$-locally $X$, $B(p(x_0),R)$ has the same cardinality as some $R$-ball in $X$, and in particular $|B(p(x_0),R)| \geq |B(x_0,R)|$. The reverse inequality holds because $p$ is a covering. Therefore, $|B(p(x_0),R)| = |B(x_0,R)|$ and $x_0 \in B$.

For (2), let $x \in B$ and $x'$ be a neighbor of $x$. The restriction of $p$ to $B(x',r_2) \subset B(x,R)$ is an injective covering, and hence is an isometry. By the second statement, it extends to an isometry $B(x',R) \to B(p(x'),R)$. In particular $B(x',R)$ and $B(p(x'),R)$ have the same cardinality and $x' \in B$.
\end{proof}

This lemma is the starting point of our approach for building a covering $X \to Y$ if $Y$ is $R$-locally $X$ in Theorem  \ref{thm:QT} and \ref{thm:mainIntro}. Indeed, we can start from an isometry $f_0 \colon B_X(x_0,R) \to Y$. By the Lemma if $d(x_0,x_1)\leq R-r_2$, we can define another isometry $B_X(x_1,R) \to Y$ that coincides with $f_0$ on $B_X(x_1,r)$. If we have a sequence $x_0,\dots,x_n$ in $X$ with $d(x_i,x_{i-1}) \leq R-r_2$, we can therefore define $f_i \colon B_X(x_i,R)\to Y$ such that $f_i$ and $f_{i-1}$ coincide on $B(x_{i},r)$. In this way, by choosing a path from $x_0$ to $x$ we can define an isometry $f_x\colon B(x,R) \to Y$ for each $x \in X$, but such a construction depends on the choice of the path. We will be able to make this idea work in two cases. The first and easiest case is when $X$ is a quasi-tree (Theorem \ref{thm:QT}), in which case we can define a prefered path between any two points. The second and harder case will be the situation in which $f_x$ does not depend on the path; it is Theorem \ref{thm:mainIntro}.

\begin{lem}\label{lem:asymp_dim_1} Let $X$ be a connected graph that is quasi-isometric to a tree. Then there exists $r_1>0$, a tree $T$, and a open covering $X = \cup_{u \in V(T)} O_u$ such that for each $u \neq v \in V(T)$,
\begin{itemize}
\item $O_u$ has diameter less than $r_1$ (for the distance in $X$).
\item $O_u \cap O_v \neq \emptyset$ if and only $(u,v)$ is an edge in $T$.
\end{itemize}
\end{lem}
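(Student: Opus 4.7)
The plan is to build the tree $T$ and the cover $\{O_u\}_{u \in V(T)}$ by pulling back a bounded cellular decomposition of the tree $T'$ through a quasi-isometry $f \colon X \to T'$. After a standard reduction, I would first arrange that $T'$ is a locally finite simplicial tree with unit edges, fix $(C,K)$-quasi-isometry constants for $f$, and pick a large scale $L$ depending on $C$ and $K$ (of order $2K+3$).

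The heart of the argument is to find a partition of $V(T')$ into connected bounded pieces whose combinatorial adjacency graph is a tree. Along non-branching portions of $T'$ I would cut every $L$ units to produce \emph{segment pieces} of diameter $L$; around each branching vertex I would use a \emph{branch piece} containing the branch vertex together with an $L$-buffer along every outgoing arm, and I would merge branching vertices at $T'$-distance at most $2L$ into a single enlarged branch piece. Declaring two pieces to be adjacent whenever they share a vertex or contain the endpoints of a common edge of $T'$, the rooted parent--child structure induced by rooting $T'$ at an arbitrary vertex makes the resulting adjacency graph a tree $T$.

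For each piece $P$ corresponding to $u \in V(T)$, I would define $O_u = f^{-1}(N_{K+1}(P))$, the $f$-preimage of the closed $(K+1)$-neighborhood of $P$ in $T'$. The pieces cover $V(T')$, so $\bigcup_u O_u = X$; and the $T'$-diameter of $N_{K+1}(P)$ is at most $2L + 2K + 2$, which by the upper bound of the quasi-isometry inequality gives $\mathrm{diam}_X(O_u) \le C(2L + 3K + 2) =: r_1$. For $u \sim_T u'$ the shared $T'$-boundary vertex of $P_u, P_{u'}$ lies within $K$ of some $f(x)$ by density, placing $x \in O_u \cap O_{u'}$. For distinct non-adjacent $u, u'$, the buffering in the branch pieces will ensure that the $T'$-distance between $P_u$ and $P_{u'}$ strictly exceeds $2(K+1)$, so their $(K+1)$-neighborhoods in $T'$ are disjoint and hence $O_u \cap O_{u'} = \emptyset$.

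The main obstacle is controlling the $T'$-distance between sibling pieces (two pieces meeting at a common parent in $T$) in the presence of branching. A naive depth-based or Voronoi-based partition of $V(T')$ yields siblings at $T'$-distance only $2$, which is strictly smaller than $2(K+1)$ as soon as $K \ge 1$, and this inserts spurious edges into the nerve and breaks the tree property. The branch-piece construction, which absorbs each branching region together with a buffer of thickness $L$ along every outgoing arm, is precisely what pushes siblings to $T'$-distance at least $L+1$; this is the technically delicate step of the argument, and the one where care with the reduction ensuring a controlled geometry of branch vertices in $T'$ is important.
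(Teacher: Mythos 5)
Your strategy---pull back a bounded decomposition of the target tree $T'$ and thicken by the density constant---breaks down at exactly the step you flag as delicate, and the breakdown is not a matter of care: the partition you need does not exist in general. You require connected pieces of uniformly bounded diameter in $T'$ such that any two pieces which are non-adjacent in the nerve lie at $T'$-distance greater than $2(K+1)$. Take $X=T_3$, so that one may be handed $T'=T_3$ with $f$ the identity (or any $T'$ with dense branching). A connected subtree $P$ of $T_3$ with $n$ vertices has exactly $3n-2(n-1)=n+2$ edges leaving it, so some vertex $v\in P$ emits two of them, say to $w,w'$. The pieces $Q\ni w$ and $Q'\ni w'$ are distinct (otherwise the connected set $Q$ would contain the path $w\text{--}v\text{--}w'$, hence $v$), they lie in different components of $T_3\setminus\{v\}$, hence share no edge of $T_3$, and yet $d_{T'}(Q,Q')\le 2$. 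So \emph{every} partition of $V(T_3)$ into finite connected pieces contains a non-adjacent pair at distance $2$; by the density clause there is $x\in X$ with $d(f(x),v)\le K$, whence $f(x)$ is within $K+1$ of both $w\in Q$ and $w'\in Q'$ and $x\in O_Q\cap O_{Q'}$. This forces a triangle into the nerve and destroys the tree structure. Your two proposed remedies do not escape this: the rule ``merge branch vertices within $2L$'' cascades to a single unbounded piece as soon as every vertex branches, and the ``reduction'' to a target tree with $D$-separated branch vertices (say by subdividing edges) multiplies the density constant $K$ by the same factor $D$, so the required separation $2(K+1)$ grows at least as fast as the separation gained.

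The paper sidesteps all of this by importing a stronger input: by Kr\"on--M\"oller there exist a simplicial tree $T$ and a \emph{surjective} quasi-isometry $q\colon V(X)\to V(T)$, which is then extended over edges of $X$ by geodesics, and one sets $O_u=q^{-1}(B_T(u,2/3))$. Surjectivity is the whole point: it permits neighbourhoods of radius $2/3<1$, for which $B_T(u,2/3)\cap B_T(v,2/3)$ is nonempty exactly when $(u,v)$ is an edge of $T$, and it guarantees (via connectedness of $X$) that the preimage of that overlap is nonempty; no buffering and no separation of pieces is ever needed. To salvage your architecture you would have to either invoke that surjectivity statement---at which point your construction collapses to the paper's---or replace the bounded separated partition by something that, as the $T_3$ computation shows, cannot be a partition of the target tree at all.
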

\begin{proof}
Consider $V(X)$ the $0$-skeleton (the set of vertices) of $X$. There is a tree $T$ and a \emph{surjective} quasi-isometry $q \colon V(X) \to V(T)$ (see \cite{KM08} for an explicit construction). Extend $q$ to a continuous quasi-isometry $X \to T$, by sending an edge to the geodesic between the images by $q$ of its endpoints. Define $O_u$ as the preimage of $B_T(u,2/3)$ by $q$. We leave it to the reader to check the required properties.
\end{proof}
 
\begin{proof}[Proof of Theorem \ref{thm:QT}] Let $X$ be a connected graph that is quasi-isometric to a tree and with cocompact isometry group. Let $r_1$, $T$ and $(O_u)_{u \in T}$ be given by Lemma \ref{lem:asymp_dim_1}. 

Let $r \geq r_1$ and let $r_2$ be given by Lemma \ref{lem:cocompact} for this value of $r$.

We define $R = r+r_2$ and we will prove that $X$ is LG-rigid at scale $R$.

Let $Y$ be a space $R$-locally $X$. Let $O_u^{r_2} = \{x \in X, d(x,O_u) \leq r_2\}$ be the $r_2$-neighborhood of $O_u$. Our goal is to construct isometries $\phi_u \colon O_u^{r_2} \to Y$ such that for all $u,v \in V(T)$, 
\begin{equation}\label{eq:phi_u_compatibles} \phi_u\textrm{ and }\phi_v\textrm{ coincide on }O_u \cap O_v.
  \end{equation}This will prove the Theorem, since then the map $\phi$ defined by $\phi(x) = \phi_u(x)$ if $x \in O_u$ is a covering that is well-defined by \eqref{eq:phi_u_compatibles}.

Consider $S_0 = \{u\in V(T), B(x_0,r) \cap O_u \neq \emptyset\}$. Using that $B(x_0,r)$ is connected and that $O_u \cap O_v\neq \emptyset$ only when $u$ and $v$ are adjacent in $T$, we see that $S_0$ is connected. We take $(S_n)_{n \geq 0}$ an increasing sequence of connected subtrees of $T$ that covers $T$, such that $S_0=\emptyset$ and $S_n$ is obtained from $S_{n-1}$ by adding a vertex. We construct by induction maps $\phi_u$ for $u \in S_n$, that satisfy \eqref{eq:phi_u_compatibles} for all $u,v \in S_n$.

For $n=1$, $S_1 = \{u\}$. Since $O_u$ has diameter less than $r_1$, $O_u^{r_2}$ is contained in a ball $B(x_0,R)$ and we can define $\phi_u$ as the restriction to $O_u^{r_2}$ of any isometry from this ball to $Y$. If $n \geq 2$ and $S_{n} = \{v \} \cup S_{n-1}$, take $u\in S_{n-1}$ the unique vertex adjacent to $v$. To ensure that \eqref{eq:phi_u_compatibles} holds on $S_n$, we only have to construct $\phi_v \colon O_v^{r_2} \to Y$ that coincides with $\phi_u$ on $O_u \cap O_v$, because $O_v$ does not intersect $O_{u'}$ for the others $u' \in S_{n-1}$. Let $x \in O_u \cap O_v$. By Lemma \ref{lem:cocompact}, there is an isometry $\widetilde \phi\colon B(x,R) \to Y$ that coincides with $\phi_u$ on $B(x,r)$, and in particular on $O_u$ because $r_1 \leq r$. We define $\phi_v$ as the restriction of $\widetilde \phi$ to $O_v^{r_2}$, which makes sense because $O_v^{r_2} \subset B(x,R)$.
\end{proof}

\section{USLG-rigidity}\label{section:USLG}

The goal of this section is to study USLG-rigidity. If a graph $X$ is USLG-rigid at scales $(r,R)$, in particular two isometries of $X$ that coincide on a ball of radius $r$ must be equal. In other words the isometry group is discrete.  Theorem \ref{thm:mainIntro}, that we prove later in this section, is a reciprocal of this. Before that we notice that covers in USLG-rigid graphs have a very special form.

\begin{prop}\label{prop:USLG_implies_covers_are_quotients} If a vertex-transitive graph $X$ is USLG-rigid at scales $(r,R)$, then for every graph $Y$ that is $R$-locally $X$, there is a group $H$ acting freely by isometries on $X$ with the the following properties~:
  \begin{itemize}
  \item there is an isometry $H \backslash X \to Y$.
    \item the map $X \to H \backslash X$ is injective on balls of radius $R$.
  \end{itemize}
\end{prop}
Note that this last property is equivalent to $d(hx,x) \notin (0,2R]$ for every $x \in X$ and $h \in H$.
\begin{proof} Let $p \colon X \to Y$ be a covering as given by LG-rigidity. Define the group $H = \{g \in \mathrm{Aut}(X), p(gx) = p(x) \forall x \in X\}$. Clearly $p$ induces $H \backslash X \to Y$. Let us show that this map is injective. Let $x_1,x_2 \in X$. Assume that $p(x_1) = p(x_2) = y$. We want to find $g \in H$ such that $g x_1 = x_2$. Let $\psi \colon B_Y(y,R) \to X$ be an isometry. Using that $X$ is $R$-locally $X$ and that $X$ is USLG-rigid, and taking into account Corollary \ref{cor:selfcover} we see that there exist $g_1,g_2\in \mathrm{Aut}(X)$ which coincide with $\psi \circ p$ on $B_X(x_i,r)$. In particular $g=g_2^{-1} g_1$ in an element of $\mathrm{Aut}(X)$ such that $g x_1 = x_2$. To see that $g$ belongs to $H$ and conclude the proof of the proposition, notice that $p$ and $p \circ g$ are coverings of $Y$ by $X$ that coincide on $B_X(x_1,r)$. By the uniqueness of such a covering, $p = p\circ g$ as desired.
\end{proof}

We record here the following consequence of Proposition \ref{prop:USLG_implies_covers_are_quotients}, that will be used in Corollary \ref{cor:residualfinite}.
\begin{lem}\label{lem:WRF+USLG=>RF} Let $(\Gamma,S)$ be a Cayley graph which is USLG-rigid. If there exists a sequence of finite graphs $(Y_n)_{n \in \N}$ such that for every $n\in \N$, $Y_n$ is $n$-locally $(\Gamma,S)$, then $\Gamma$ is residually finite.
\end{lem}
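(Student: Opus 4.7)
The plan is to use USLG-rigidity of $(\Gamma,S)$ to produce, for each sufficiently large $n$, a finite-index subgroup $N_n\leq \Gamma$ missing any prescribed non-identity element whose word length is less than $n$; the intersection $\bigcap_n N_n$ will then be trivial, giving residual finiteness of $\Gamma$.

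Let $(r,R)$ be scales at which $X=(\Gamma,S)$ is USLG-rigid, and fix $n\geq R$. Applying Proposition \ref{prop:USLG_implies_covers_are_quotients} to $Y_n$ yields a subgroup $H_n\leq G:=\Isom(X)$ acting freely on $X$ together with a covering $p_n\colon X\to Y_n\cong H_n\backslash X$ that is injective on balls of radius $R$. Since $p_n$ is a covering we have $p_n(B_X(x,n))=B_{Y_n}(p_n(x),n)$, and these two balls have the same cardinality because $Y_n$ is $n$-locally $X$ and $X$ is vertex-transitive; hence $p_n$ is in fact injective on every ball of radius $n$.

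Next I would argue that $[G:H_n]<\infty$. USLG-rigidity forces $G$ to be discrete. Since $X$ is locally finite, the vertex stabilizer $K:=\mathrm{Stab}_G(e)$ is compact in the compact-open topology, and being also discrete it must be finite. Identifying $V(X)$ with $G/K$ via $gK\mapsto g\cdot e$, the $H_n$-orbits on $V(X)$ are parametrized by the double coset space $H_n\backslash G/K$, whose cardinality equals $|V(Y_n)|<\infty$. Combined with $|K|<\infty$, this yields $|H_n\backslash G|<\infty$. The natural injection $\Gamma/(\Gamma\cap H_n)\hookrightarrow G/H_n$ given by $\gamma(\Gamma\cap H_n)\mapsto \gamma H_n$ then gives $[\Gamma:N_n]\leq[G:H_n]<\infty$ for $N_n:=\Gamma\cap H_n$.

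Finally, fix $g\in\Gamma\setminus\{e\}$ and take $n>\max(|g|_S,R)$. Since $g\in B_X(e,n)$ and $p_n$ is injective on this ball, $p_n(g)\neq p_n(e)$; this prevents the left multiplication $L_g$ from lying in $H_n$ (else $p_n\circ L_g=p_n$ would give $p_n(g)=p_n(e)$), hence $g\notin N_n$. Taking the intersection over all such $n$ yields $\bigcap_n N_n=\{e\}$, so $\Gamma$ is residually finite. The only delicate step is the passage from ``$H_n$ has finitely many orbits on $V(X)$'' to ``$H_n$ has finite index in $G$'', which crucially exploits the finiteness of vertex stabilizers, i.e.\ the discreteness of $G$ provided by USLG-rigidity.
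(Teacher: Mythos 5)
Your proof is correct. The opening move --- apply Proposition \ref{prop:USLG_implies_covers_are_quotients} to $Y_n$ to get a subgroup $H_n\leq\Isom(X)$ acting freely with $Y_n\cong H_n\backslash X$, and upgrade the injectivity of $p_n$ to balls of radius $n$ by the cardinality count --- is exactly the paper's. Where you diverge is in how the finiteness of $Y_n$ is converted into residual finiteness. The paper lets $\Gamma$ act by \emph{right} multiplication on the finite set $H_n\backslash X$ and observes that non-trivial elements of word length less than $2n$ act without fixed points; this is shorter, but it leaves implicit the verification that right translations actually permute the fibres of $p_n$. You instead show that $H_n$ has finite index in the full isometry group $G$, by combining the finiteness of the orbit space $H_n\backslash G/K$ (which is the vertex set of $Y_n$) with the finiteness of the vertex stabilizer $K$ --- and this last point is precisely where the discreteness of $G$, guaranteed by USLG-rigidity, enters --- and then pass to the genuine finite-index subgroup $N_n=\Gamma\cap H_n$, which avoids every non-trivial element of length less than $n$. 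Your route costs the extra double-coset counting, but in exchange it produces explicit finite-index subgroups with trivial intersection and entirely sidesteps the question of whether right multiplication descends to $H_n\backslash X$. Both arguments are complete; yours is a legitimate and slightly more robust alternative to the paper's concluding step.
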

\begin{proof} Let $0<r \leq R$ be such that $X =(\Gamma,S)$ is USLG-rigid at scales $(r,R)$.

To prove that $\Gamma$ is residually finite, for every finite set $F$ in $\Gamma$ we construct an action of $\Gamma$ on a finite set such that the elements in $F \setminus\{1_\Gamma\}$ have no fixed point. To do so take a finite set $F$ in $\Gamma$, and pick $n > R$ such that $F$ is contained in the ball of radius $2n$ around the identity in $(\Gamma,S)$. By the assumption there is a finite graph $Y$ that is $n$-locally $X$. Since $X$ is USLG-rigid at scales $(r,R)$ and $R<n$, by Proposition \ref{prop:USLG_implies_covers_are_quotients} there is a subgroup $H \subset \mathrm{Aut}(X)$ that acts freely on $X$ such that $Y$ identifies with $H \backslash X$. In particular the action of $\Gamma$ by right-multiplication on the vertex set of $X$ passes to the quotient $H \backslash X$, and non-trivial elements of length less than $2n$ in $\Gamma$ have no fixed point. In particular no element of $F\setminus \{1_\Gamma\}$ has a fixed point. This shows that $\Gamma$ is residually finite.
\end{proof}

\subsection{Proof of Theorem \ref{thm:mainIntro}}

Let $X$ be as in Theorem \ref{thm:mainIntro}. Let $k\geq 2$ such that $X$ is $k$-simply connected. Denote $G$ the isometry group of $X$. By the assumption that $G$ is discrete and cocompact, there exists $r_c\geq 0$ such that if two isometries $g$ and $g'$ in $G$ coincide on a ball $B_X(x,r_c)$ of radius $r_c$, then they are equal.

We shall prove the following precise form of Theorem \ref{thm:mainIntro}.
\begin{prop}\label{prop:USLG_rigid_strong_form} There exists $C>0$ such that $X$ is USLG-rigid at scales $(r,r+C)$ for every $r \geq r_c$.

\end{prop}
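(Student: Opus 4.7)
The plan is to extend the path-transport strategy of Theorem \ref{thm:QT} by replacing the canonical tree-path of Lemma \ref{lem:asymp_dim_1} by paths considered up to elementary homotopy in $P_k(X)$. Fix $k$ such that $X$ is $k$-simply connected, set $r_0 := r_c + k + 1$, let $r_2 = r_2(r_0)$ be the constant produced by Lemma \ref{lem:cocompact} applied with parameter $r_0$, and put $C := r_2 + 1$. Then for every $r \geq r_c$ one has $R := r + C > r_2$ and $R \geq r_0$, so Lemma \ref{lem:cocompact} and its assumptions apply throughout. Fix $Y$ that is $R$-locally $X$ and an isometry $\phi_0 : B(x_0, R) \to Y$.

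Starting from $\phi_0$, the local transport is defined as follows: given any isometry $\phi : B(x, R) \to Y$ and any neighbor $x'$ of $x$, Lemma \ref{lem:cocompact} applied to $\phi|_{B(x', r_2)}$ yields an isometry $T_{x \to x'}(\phi) : B(x', R) \to Y$ that agrees with $\phi$ on $B(x', r_0)$, and iterating along a path $\gamma$ produces $\phi_\gamma : B(\gamma_{\mathrm{end}}, R) \to Y$. The crucial ingredient is the following \emph{rigidity principle for extensions}: two isometries $\psi, \psi' : B(z, R) \to Y$ that agree on $B(z, r_c + 1)$ must agree on $B(z, R - r_2)$. The plan for this step is to post-compose with a local isometry $\iota : B(\psi(z), R) \to X$ provided by the $R$-locality of $Y$, and to apply Lemma \ref{lem:cocompact} at every $w \in B(z, R - r_2)$ to obtain a unique element $h_w \in G$ with $h_w = \iota \circ \psi$ on $B(w, r_0)$ (uniqueness follows from discreteness of $G$ since $r_0 > r_c$); the analogous element $h'_w$ for $\psi'$ must equal $h_w$ because a propagation along edges inside the connected ball $B(z, R - r_2)$, using that $B(w, r_0) \cap B(w', r_0) \supset B(w, r_0 - 1) \supset B(w, r_c)$ for $w \sim w'$, forces $h_w$ not to depend on $w$, and the hypothesis on $B(z, r_c + 1)$ together with discreteness gives $h_z = h'_z$. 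Hence $\iota \circ \psi = \iota \circ \psi'$ on $B(z, R - r_2)$ and injectivity of $\iota$ yields $\psi = \psi'$ there.

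Path-independence of the transport then reduces, by $k$-simple connectedness, to checking that transport around any loop $\alpha = (y_0, \ldots, y_m = y_0)$ of length $m \leq k$ recovers the starting isometry. Since each step preserves the isometry on the radius-$r_0$ ball around the new vertex, iteration yields $\phi_m : B(y_0, R) \to Y$ that agrees with $\phi_0$ on every $z \in \bigcap_{i=0}^{m} B(y_i, r_0) \supset B(y_0, r_0 - k) = B(y_0, r_c + 1)$, using $d(y_0, y_i) \leq k$. The rigidity principle upgrades this to $\phi_m = \phi_0$ on $B(y_0, R - r_2) \supset B(y_0, r)$. Setting $\tilde\phi(x) := \phi_\gamma(x)$ for any path $\gamma$ from $x_0$ to $x$, the characterization of $k$-simple connectedness recalled in Section \ref{section:preliminaries} (any two paths with fixed endpoints are related by a sequence of $2$-cell substitutions, each replacing a middle subpath by another subpath so that their concatenation has length at most $k$) combined with the previous closing-up statement shows that $\tilde\phi(x)$ is well defined.

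Finally, $\tilde\phi : X \to Y$ is a local isometry (its restriction to each ball of radius $R - r_2$ coincides with some $\phi_\gamma$), and the $R$-locality hypothesis together with the cardinality argument used in the Lemma at the beginning of Section \ref{section:large-scaleSC} ensures that the induced map on $1$-balls is bijective, so $\tilde\phi$ is a covering. Uniqueness of this covering among those extending $\phi_0|_{B(x_0, r)}$ follows from the standard path-lifting property, since any such covering is uniquely determined by its value at $x_0$. The main obstacle is the rigidity principle; once it is available, everything else reduces to a careful bookkeeping of how much radius is preserved at each transport step and to the homotopy-lifting argument for the loops bounding the $2$-cells of $P_k(X)$.
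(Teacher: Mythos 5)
Your strategy is the same as the paper's: transport a local isometry along paths using Lemma \ref{lem:cocompact}, use discreteness of $G$ (via what you call the rigidity principle, which is the paper's Lemma \ref{lem1}) to make the transported germ canonical, and use $k$-simple connectedness to kill the path-dependence by checking loops of length at most $k$. The paper packages this slightly differently --- it works with germs on balls of a \emph{fixed} radius $r_1=r_c+k/2$ that extend to radius $r_2$, so that the transport maps $F_{x,x'}$ are honest bijections between finite sets, whereas you carry isometries on full $R$-balls and rely on the rigidity principle to make the construction well defined modulo agreement on $B(\cdot,R-r_2)$ --- but this is a cosmetic difference and your rigidity principle is proved correctly.

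There are, however, two concrete gaps in the write-up. First, your argument only establishes that $\tilde\phi$ agrees with $\phi_0$ near $x_0$ up to radius roughly $r_0=r_c+k+1$: each transport step along a geodesic from $x_0$ to $x'$ preserves agreement only on $B(y_i,r_0)$, so the chain $\phi_n(x')=\phi_{n-1}(x')=\cdots=\phi_0(x')$ breaks once $d(x_0,x')>r_0+1$, and for $r$ large the claimed agreement on all of $B(x_0,r)$ (and likewise the claim that $\tilde\phi$ coincides with some $\phi_\gamma$ on balls of radius $R-r_2=r+1$) is not justified. The paper avoids this by first proving SLG-rigidity at the fixed scales $(r_1,R)$ and then boosting to arbitrary $r=r_c+\delta$ via Remark \ref{rem:slg+discrete->USLG}; in your setup the fix is to note that once $\tilde\phi$ is known to be a covering it is injective, hence an isometry, on $R$-balls, and then to apply your rigidity principle to $\tilde\phi|_{B(x_0,R)}$ and $\phi_0$, which agree on $B(x_0,r_c+1)$ and therefore on $B(x_0,R-r_2)\supseteq B(x_0,r)$. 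Second, the uniqueness assertion ``any such covering is uniquely determined by its value at $x_0$'' is false: a covering $X\to Y$ is not determined by the image of one point (the two coverings $k\mapsto \pm k \bmod n$ of a cycle by $\Z$ agree at $0$). The correct statement, which is what the paper's Remark \ref{rem:slg+discrete->USLG} proves, is that a covering is determined by its restriction to $B(x_0,r_c)$: two coverings agreeing there agree on $B(x_0,R-r_2)$ by your rigidity principle, hence on $B(w,r_c)$ for every neighbour $w$ of $x_0$, and the agreement propagates by connectedness. Both repairs use only tools you have already set up, but as written these two steps do not go through.
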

In the sequel, we let $G$ denote the isometry group of $X$.
We shall need the following lemma. 
\begin{lem}\label{lem1}
Given $r_1 \geq r_c$, there exists $r_2 \geq r_1$ such that the following holds:
\begin{itemize}
\item for every $x\in X$, the restriction to $B_X(x,r_1)$ of an isometry $f\colon B_X(x,r_2)\to X$ coincides with the restriction of an element of $G$;
\item the restriction to $B_X(x,r_1)$ of an isometry $f\colon B_X(x,r_2)\to X$ is uniquely determined by its restriction to $B_X(x,r_c)$.
 \end{itemize}
\end{lem}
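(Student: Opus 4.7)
The plan is to reduce both bullets to the combination of a single compactness argument together with the defining property of $r_c$. The first bullet is exactly the conclusion of Lemma \ref{lem:cocompact} applied with $r=r_1$, so I would invoke that lemma directly. If a self-contained proof were preferred, the standard diagonal extraction works: were no $r_2$ to suffice, one would get a sequence of isometries $f_n \colon B_X(x_n, n) \to X$ whose restrictions to $B_X(x_n, r_1)$ disagree with every element of $G$; cocompactness lets us reduce to the case where $x_n$ and $f_n(x_n)$ stay in a finite set of orbit representatives, local finiteness lets us extract a pointwise limit, which is an isometry of $X$ not lying in $G=\Isom(X)$, a contradiction.

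For the second bullet I would reuse the very same $r_2$. Given two isometries $f_1, f_2 \colon B_X(x, r_2) \to X$ that agree on $B_X(x, r_c)$, the first bullet produces $g_1, g_2 \in G$ whose restrictions to $B_X(x, r_1)$ coincide with those of $f_1$ and $f_2$. Since $r_c \leq r_1$, restricting further to $B_X(x, r_c) \subset B_X(x, r_1)$ and using the hypothesis $f_1|_{B_X(x,r_c)}=f_2|_{B_X(x,r_c)}$ gives $g_1|_{B_X(x,r_c)}=g_2|_{B_X(x,r_c)}$. The defining property of $r_c$ (namely, two elements of the discrete group $G$ that coincide on some ball of radius $r_c$ are equal) then forces $g_1=g_2$, whence $f_1=g_1=g_2=f_2$ on all of $B_X(x, r_1)$, as required.

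There is no serious obstacle here: the first bullet is a standard compactness/cocompactness extraction, and the second is obtained by gluing the first to the definition of $r_c$. The only point worth flagging is that one should commit to a single $r_2$ that serves both bullets simultaneously, which is automatic since the second bullet is deduced from the first without enlarging the radius.
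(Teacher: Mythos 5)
Your proposal is correct and follows exactly the paper's own argument: the first bullet is cited as Lemma \ref{lem:cocompact}, and the second is deduced from the first by passing to elements $f',g'\in G$ agreeing on $B_X(x,r_c)$ and invoking the defining property of $r_c$. Nothing further is needed.
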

\begin{proof}
The first part is Lemma \ref{lem:cocompact}.

For the second part, let $f,g \colon B_X(x,r_2) \to X$ be two isometries which coincide on $B_X(x,r_c)$. By the first part there exists $f',g' \in G$ which coincide with $f$ and $g$ respectively on $B_X(x,r_1)$. Since $f' = g'$ on $B_X(x,r_c)$, we get $f'=g'$, and in particular $f =g$ on $B_X(x,r_1)$.
\end{proof}
\begin{rem}\label{rem:slg+discrete->USLG} This lemma applied to $r_c+1$ provides us with $r_2^{(r_c+1)}$ such that if $Y$ is $r_2^{(r_c+1)}$-locally $X$ and $\phi_1,\phi_2 \colon X \to Y$ are covering maps that coincide on $B(x,r_c)$, then they coincide on $B(x,r_c+1)$ (and hence everywhere since $X$ is connected). This implies the following~: if we are able to prove that $X$ is USLG-rigid at some scales $(r,R)$ for $r\geq 1+r_c$, then $X$ is USLG-rigid at scales $(r_c+\delta,\max(R+\delta,r_2^{(r_c+1)}))$ for all $\delta \geq 0$. Indeed, if $\phi\colon B(x,\max(R+\delta,r_2^{(r_c+1)})) \to Y$ is an isometry, we can apply that $X$ is USLG-rigid at scales $(r,R)$ to the restriction of $\phi$ to $B(x',R)$ for every $x' \in B(x,\delta)$, and get a covering $\widetilde \phi_{x'}\colon X \to Y$ that coincides with $\phi$ on $B(x',r)$. If $x',x'' \in B(x,\delta)$ satisfy $d(x',x'')\leq 1$, the covering $\widetilde \phi_{x''}$ coincides with $\phi$ on $B(x'',r)$, and in particular on $B(x',r_c)$ because $r \geq r_c+1$. By our property defining $r_2^{(r_c+1)}$, we have $\widetilde \phi_{x''}= \widetilde \phi_{x'}$. Since $B(x,\delta)$ is connected we get that $\widetilde \phi_{x} = \widetilde \phi_{x'}$ for all $x' \in B(x,\delta)$, and in particular $\widetilde \phi_x$ coincides with $\phi$ on $B(x,r+\delta)$. This proves that there exists a covering $\widetilde \phi\colon X \to Y$ which coincides with $\phi$ on $B(x,r_c+\delta)$. It is the unique such, since it is the
unique covering that coincides with $\phi$ on the smaller ball $B(x,r_c)$.
\end{rem}


Take now $r_1 = r_c+t$ for some $t\geq 1$ to be determined later, and $r_2\geq r_1$ the radius given by Lemma \ref{lem1}. Let $Y$ be a
graph that is $R$-locally $X$  with $R \geq r_2+ t$. For every $x \in X$ denote by $\germ(x)$ the set of all isometries $\phi \colon B_X(x,r_1)\to Y$ that are restrictions of an isometry $B_X(x,r_2)\to Y$.

\begin{lem}\label{lem2} Let $x,x' \in X$ with $d(x,x') \leq t$ and $\phi \in \germ(x)$. Then there is one and only one element of $\germ(x')$ that coincides with $\phi$ on $B(x',r_c)$, and it coincides with $\phi$ on $B(x,r_1) \cap B(x',r_1)$.
\end{lem}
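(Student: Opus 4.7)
The strategy is to transfer the local isometry $\phi$ from a neighborhood of $x$ to a neighborhood of $x'$ by pulling it back through the locally-$X$ structure of $Y$ near the point $y' := \tilde\phi(x')$, where $\tilde\phi : B_X(x, r_2) \to Y$ is the isometric extension of $\phi$ guaranteed by $\phi \in \germ(x)$. The key point is that once the picture is transported back into $X$, we can invoke Lemma \ref{lem1} directly.

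First I would use the hypothesis that $Y$ is $R$-locally $X$ with $R \geq r_2 + t$ to pick an isometry $\sigma : B_Y(y', R) \to B_X(z, R)$ sending $y'$ to $z$. Since $d(x, x') \leq t$, the image of $\tilde\phi$ sits inside $B_Y(\tilde\phi(x), r_2) \subset B_Y(y', r_2 + t) \subset B_Y(y', R)$, so $\sigma \circ \tilde\phi : B_X(x, r_2) \to X$ is a well-defined isometric embedding. By the first part of Lemma \ref{lem1}, there is $g \in G$ coinciding with $\sigma \circ \tilde\phi$ on $B_X(x, r_1)$; since $x' \in B_X(x, r_1)$ (as $d(x,x') \leq t \leq r_1$) we obtain $g(x') = \sigma(y') = z$. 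Hence $g$ maps $B_X(x', r_2)$ isometrically onto $B_X(z, r_2) \subset B_X(z, R)$, and I define $\tilde\phi' := \sigma^{-1} \circ g|_{B_X(x', r_2)}$. Its restriction $\phi' := \tilde\phi'|_{B_X(x', r_1)}$ lies in $\germ(x')$ by construction.

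For the agreement statement, any $a \in B_X(x, r_1) \cap B_X(x', r_1)$ satisfies $\phi'(a) = \sigma^{-1}(g(a)) = \sigma^{-1}(\sigma(\tilde\phi(a))) = \phi(a)$. Because $r_c + t = r_1$, the ball $B_X(x', r_c)$ is contained in $B_X(x, r_1) \cap B_X(x', r_1)$, and in particular $\phi' = \phi$ on $B_X(x', r_c)$. Uniqueness follows cleanly from the second part of Lemma \ref{lem1}: any competing $\phi'' \in \germ(x')$ agreeing with $\phi$ on $B_X(x', r_c)$ must satisfy $\tilde\phi''(x') = y'$, so $\sigma \circ \tilde\phi''$ is an isometric embedding $B_X(x', r_2) \to X$ coinciding with $\sigma \circ \tilde\phi'$ on $B_X(x', r_c)$; by Lemma \ref{lem1} they therefore coincide on all of $B_X(x', r_1)$, and hence $\phi'' = \phi'$.

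The one step requiring a little care is arranging that $\sigma$ sends the center $y'$ to the center $z$ of the target ball. This is essentially cosmetic in the graph setting, since there is enough slack in the hypothesis $R \geq r_2 + t$ to absorb any mismatch between the nominal center of the target ball and the actual image of $y'$ under an arbitrary isometry provided by the $R$-local condition; but it is the bookkeeping point on which one must be precise. Once this normalization is granted, the proof is just the composition diagram $\sigma \circ \tilde\phi \rightsquigarrow g \rightsquigarrow \sigma^{-1} \circ g$ together with two applications of Lemma \ref{lem1}.
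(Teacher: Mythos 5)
Your argument is correct and is essentially the paper's own proof: the paper compresses your $\sigma\circ\tilde\phi \rightsquigarrow g \rightsquigarrow \sigma^{-1}\circ g$ transfer into the single sentence ``by Lemma \ref{lem1} and the fact that balls of radius $R$ in $Y$ are isometric to balls of radius $R$ in $X$, $\phi$ is the restriction of an isometry $B(x,R)\to Y$,'' and then restricts that extension to $B(x',r_2)$ to get the element of $\germ(x')$, with uniqueness from the second part of Lemma \ref{lem1} exactly as you do. The center-normalization of $\sigma$ that you flag is glossed over in the paper as well (it is absorbed by the slack $R\geq r_2+t$), so your write-up is just a more explicit rendering of the same proof.
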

\begin{proof}
For the existence, by Lemma \ref{lem1} and the fact that balls of radius $R$ in $Y$ are isometric to balls of radius $R$ in $X$, $\phi \in \germ(x)$ is the restriction to $B(x,r_1)$ of (at least) one isometry $\widetilde \phi\colon B(x,R) \to Y$. Then the restriction of $\widetilde \phi$ to $B(x',r_2)$ is an isometry and hence defines an element of $\germ(x')$ that coincides with $\phi$ on $B(x,r_1) \cap B(x',r_1)$.

The uniqueness also follows from Lemma \ref{lem1}, which implies that every element of $\germ(y)$ is determined by its restriction to $B(y,r_c)$.
\end{proof}

\begin{prop}\label{prop:def_Fxx} Assume that $t \geq \frac k 2$. There is a unique family $(F_{x,x'})_{x,x'\in V(X)}$ where
\begin{enumerate}
\item\label{item1} $F_{x,x'}$ is a bijection from $\germ(x) \to \germ(x')$.
\item\label{item2} If $d(x,x') \leq t$ and $\phi \in \germ(x)$, then $F_{x,x'}(\phi)$ is the unique element of $\germ(x')$ that coincides with $\phi$ on $B(x',r_c)$.
\item\label{item3} $F_{x',x''} \circ F_{x,x'} = F_{x,x''}$ for all $x,x',x'' \in X$.
\end{enumerate}
\end{prop}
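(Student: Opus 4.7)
The plan is to define $F_{x,x'}$ by composing the ``nearby'' transports from condition (2) along an edge-path from $x$ to $x'$, and then to establish path-independence using the simple connectedness of $P_k(X)$. More precisely, given any edge-path $\gamma=(x_0,\dots,x_n)$ with $x_0=x$ and $x_n=x'$, I would set $F^\gamma_{x,x'}:=F_{x_{n-1},x_n}\circ\cdots\circ F_{x_0,x_1}$, where each factor is defined by Lemma \ref{lem2} (note $t\geq k/2\geq 1$, the degenerate case $k\leq 1$ forcing $X$ to be a tree where the claim is trivial). The whole proof then reduces to checking that $F^\gamma_{x,x'}$ depends only on the pair $(x,x')$.

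The heart of the argument is the loop statement: since $P_k(X)$ is simply connected, path-independence reduces to showing that for every simple loop $\ell=(y_0,y_1,\ldots,y_m=y_0)$ in $X$ with $m\leq k$, the composition $F_{y_{m-1},y_m}\circ\cdots\circ F_{y_0,y_1}$ is the identity on $\germ(y_0)$. This is where the hypothesis $t\geq k/2$ is tailored in: the loop has diameter at most $k/2\leq t$, hence every $y_i$ lies in $B(y_0,t)$ and $B(y_i,r_2)\subset B(y_0,r_2+t)\subset B(y_0,R)$. Given $\phi\in\germ(y_0)$, I would first lift $\phi$ to an isometry $\widetilde\phi\colon B(y_0,R)\to Y$: this is possible because $Y$ is $R$-locally $X$, so one picks an isometric chart $\alpha\colon B_X(x,R)\to B_Y(\phi(y_0),R)$, observes that $\alpha^{-1}\circ\phi$ (extended to $B(y_0,r_2)$ thanks to $\phi\in\germ(y_0)$) coincides on $B(y_0,r_1)$ with some $g\in\Isom(X)$ by Lemma \ref{lem1}, and sets $\widetilde\phi:=\alpha\circ g|_{B(y_0,R)}$. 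Setting $\phi_i:=\widetilde\phi|_{B(y_i,r_1)}$ gives an element of $\germ(y_i)$, and $\phi_i$ and $\phi_{i+1}$ trivially agree on $B(y_{i+1},r_c)\subset B(y_i,r_1)\cap B(y_{i+1},r_1)$. The uniqueness clause of Lemma \ref{lem2} then forces $\phi_{i+1}=F_{y_i,y_{i+1}}(\phi_i)$, and iterating yields $\phi_m=\phi_0$.

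Once path-independence is proved, one sets $F_{x,x'}:=F^\gamma_{x,x'}$ for any edge-path $\gamma$. The cocycle relation (3) is automatic from concatenation of paths, and bijectivity (1) follows by taking the reverse path, so that $F_{x',x}\circ F_{x,x'}=F_{x,x}=\id_{\germ(x)}$. Uniqueness of the family is immediate as well: conditions (2) and (3) together force any such $F_{x,x'}$ to equal the composition along any chosen edge-path, and each elementary factor is pinned down by (2) via Lemma \ref{lem2}. The one delicate step is genuinely the loop argument: the hypothesis $t\geq k/2$ is used precisely to keep an entire $k$-gon inside a region where a single global isometry $\widetilde\phi$ to $Y$ is defined, so that each $\phi_i$ is a genuine restriction of a common map and the uniqueness half of Lemma \ref{lem2} can be invoked at every edge of the loop.
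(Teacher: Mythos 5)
Your strategy is essentially the paper's: define the transport for nearby pairs via Lemma \ref{lem2}, compose along paths, and use $k$-simple connectedness together with $t\geq k/2$ to reduce path-independence to triviality around loops of length at most $k$, which fit inside a single ball where a common lift to $Y$ exists. The cosmetic difference is that the paper composes along sequences whose steps have length $\leq t$ and derives the collapsing identity $F_{(x_1,\dots,x_n)}=F^{(0)}_{x_1,x_n}$ for sets of diameter $\leq t$ from the clause of Lemma \ref{lem2} asserting agreement on $B(x,r_1)\cap B(x',r_1)$, whereas you work with edge-paths and prove loop-triviality by exhibiting one global isometry $\widetilde\phi\colon B(y_0,R)\to Y$ that restricts to every $\phi_i$; both routes are correct. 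The one clause you do not verify is (\ref{item2}) itself for pairs with $2\leq d(x,x')\leq t$: your $F_{x,x'}$ is by construction a composition along an edge-path, and it must be shown to coincide with the one-step map furnished by Lemma \ref{lem2}. This is not a formal consequence of path-independence, but it follows from exactly the technique you use for loops, applied to a geodesic from $x$ to $x'$: the geodesic has diameter $d(x,x')\leq t$, so all $r_2$-balls along it lie in $B(x,R)$, a single lift $\widetilde\phi$ restricts to each $\phi_i$, and the terminal restriction agrees with $\phi$ on $B(x',r_c)$, hence equals the one-step image of $\phi$ by the uniqueness in Lemma \ref{lem2}. With that one line added the argument is complete.
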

\begin{proof}
If $(x,x') \in X$ satisfy $d(x,x') \leq t$ Lemma \ref{lem2} provides a bijection 
\[F_{x,x'}^{(0)}\colon \germ(x)\to \germ(x')\]
satisfying (\ref{item2}).

For every sequence $(x_1,\dots,x_n)$ of vertices of $X$ where $d(x_i,x_{i+1}) \leq t$ we define $F_{(x_1,\ldots,x_n)}\colon \germ(x_1) \to \germ(x_n)$ by composing the bijections $F^{(0)}_{x_i,x_{i+1}}$ along the path. Then Lemma \ref{lem2} implies that
\begin{equation}\label{eq=truc} F_{(x_1,\dots,x_n)} = F^{(0)}_{x_1,x_n}\textrm{ if diam}(\{x_1,\dots,x_n\}) \leq t.\end{equation}

Let $\gamma \colon [0,1] \to X$ be a continuous path. For every subdivision $0=a_1 \leq a_2 \leq \dots a_n=1$ with $d(\gamma(a_{i+1}),\gamma(a_i))\leq t$, we can consider $F_{\gamma(a_1),\dots,\gamma(a_n)} \colon  \germ(\gamma(a)) \to \germ(\gamma(b))$, and by \eqref{eq=truc} $F_{\gamma(a_1),\dots,\gamma(a_n)}$ is unchanged if one passes to a finer subdivision, and hence does not depend on the subdivision. Denote this map by $F_\gamma \colon \germ(\gamma(a)) \to \germ(\gamma(b))$. 

But again by \eqref{eq=truc}, $F_\gamma$ is invariant under homotopy fixing the end points. Therefore the map $\gamma \mapsto F_\gamma$ induces a map on the fundamental groupoid $\Pi_1(X)$. By the definition of $F$, $F_\gamma$ is the identity of $\germ(\gamma(a))$ if and $\gamma(a) = \gamma(b)$ and $\mathrm{diam}(\gamma([a,b]) \leq t$. By the inequality $k \leq 2t$ and the fact that $X$ is $k$-simply connected, we get that $F_\gamma$ is the identity of $\germ(\gamma(a))$ for all paths $\gamma$ such that $\gamma(a) = \gamma(b)$. This implies that $F_{\gamma}$ depends only on the endpoints $\gamma(a)$ and $\gamma(b)$. We can define $F_{x,x'}$ as the common value of $F_{\gamma}$ for all such $\gamma$ with $\gamma(a)=x$ and $\gamma(b)=x'$, and the existence of $F$ satisfying \ref{item1},\ref{item2},\ref{item3} in Lemma is proved. The uniqueness is clear since $X$ is connected.
\end{proof}

We are ready to prove that $X$ is USLG-rigid. We now fix the value of $t$ to $t = \frac k 2$, so that $r_1 = r+\frac k 2$. Let $f\colon B_X(x_0,R) \to Y$ be an isometry. The restriction of $f$ to $B_X(x_0,r_1)$ defines $\phi_0 \in \germ(x_0)$. For every $x\in X$ we define $\phi_x = F_{x_0,x}(\phi_0)$ and $\pi(x) = \phi_x(x)$, where $(F_{x,x'})_{x,x'\in V(X)}$ is given by Proposition \ref{prop:def_Fxx}. Then by (\ref{item2}) in the Proposition (and Lemma \ref{lem2}), $\pi$ coincides with $\phi_x$ on $B_X(x,r_1)$ for every $x \in X$. In particular $\pi$ is a covering map and coincides with $f$ on $B(x_0,r)$. Let us prove the uniqueness of $\pi$. Let $\pi'$ be another such covering. In our vocabulary, the third part of Lemma \ref{lem:cocompact}, says that for every $x \in X$, the restriction of $\pi'$ to $B_X(x,r_1)$ belongs to $\germ(x)$. It follows from Proposition \ref{prop:def_Fxx} and induction on $d(x_0,x)$ that for every $x \in X$, $\pi'$ coincides with $\phi_x$ on $B_X(x,r_1)$. So $\pi'=\pi$. This proves that $X$ is USLG-rigid at scales $(r,R)$. This implies Proposition \ref{prop:USLG_rigid_strong_form} by Remark \ref{rem:slg+discrete->USLG}.

\section{Groups whose Cayley graphs all have discrete isometry group}\label{section:discrete}
We recall that we view the isometry group of a graph $X$ (and more generally every subgroup of it) as a topological group for the topology of pointwise convergence. We start by the following lemma.
\begin{lem}\label{lem:discreteimpliestorsionfree} An infinite finitely generated group with a non-trivial torsion element has a Cayley graph, the isometry group of which contains an infinite compact subgroup.
\end{lem}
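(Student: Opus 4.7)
The plan is to choose $S$ so that the Cayley graph $(\Gamma,S)$ has many pairs of vertices sharing the same neighbourhood (``twins''), and then to exploit independent permutations of twins to build an infinite compact subgroup of the isometry group that fixes the identity vertex.

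Let $t\in\Gamma$ be a non-trivial torsion element and $H=\langle t\rangle$; since $\Gamma$ is infinite and $H$ is finite, $\Gamma\setminus H$ is non-empty. First I would produce a finite symmetric generating set $S_0$ of $\Gamma$ with $1\notin S_0$ and $S_0\cap H=\emptyset$. Starting from an arbitrary symmetric generating set $S_1$ not containing $1$, fix any $g\in\Gamma\setminus H$, replace every $s\in S_1\cap H$ by the symmetric pair $\{sg,\,g^{-1}s^{-1}\}$, and adjoin $g^{\pm 1}$; the short check that $sg\notin H$ (otherwise $g\in s^{-1}H=H$) and that $g$ allows one to recover each removed $s$ as $(sg)g^{-1}$ shows that this new set $S_0$ is symmetric, still generates $\Gamma$, and avoids both $H$ and $\{1\}$.

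I would then set $S:=HS_0H$. By construction $S$ is symmetric, generates $\Gamma$ (as $S_0\subset S$), is bi-$H$-invariant in the sense $HS=SH=S$, and satisfies $S\cap H=\emptyset$ (in particular $1\notin S$), since an equality $h_1 s h_2\in H$ would force $s\in H$. In the Cayley graph $X=(\Gamma,S)$ this has the crucial consequence that for every $v\in\Gamma$ and every $h\in H$, $N(vh)=vhS=vS=N(v)$. Thus each left coset $vH$ consists of $|H|\geq 2$ mutually twin, pairwise non-adjacent vertices.

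The remaining step is routine: swapping any two twins of a single coset $vH$ while fixing every other vertex yields a graph automorphism of~$X$. Indeed no edge joins two twins of the same coset, and whether a twin $w\in vH$ is joined to some $u\notin vH$ depends only on $vH$ and $u$ by the bi-$H$-invariance of $S$. Applying this independently over the infinitely many cosets $vH\neq H$ produces a subgroup of $\Isom(X)$ isomorphic to the direct product $\prod_{vH\neq H}\mathrm{Sym}(vH)$, all of whose elements fix $1$. Since $X$ is locally finite and vertex-transitive, the stabilizer of $1$ in $\Isom(X)$ is compact in the pointwise-convergence topology, and the product just constructed is uncountable, so it furnishes the desired infinite compact subgroup. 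I do not anticipate any serious obstacle; the single non-trivial input, that $\Gamma\setminus H$ is infinite, uses exactly the hypothesis that $\Gamma$ is infinite.
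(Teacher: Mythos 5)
Your proof is correct and follows essentially the same route as the paper: pass to a bi-$H$-invariant generating set $HS_0H$ for a finite nontrivial subgroup $H$, observe that every permutation of $\Gamma$ preserving the left $H$-cosets is an isometry, and conclude that the isometry group contains the infinite compact group $\prod_{vH}\mathrm{Sym}(vH)$. The only difference is your preliminary adjustment making $S_0$ disjoint from $H$ (so that each coset is a set of pairwise non-adjacent twins), a refinement the paper skips because bi-$H$-invariance of the generating set alone already makes all coset-preserving permutations isometries.
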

\begin{proof}
Let $\Gamma$ be infinite and finitely generated, with finite symmetric generating set $S$. If $\Gamma$ is not torsion-free, it has a non-trivial finite subgroup $F$. Then $FSF = \{fsf'|f,f'\in F,s \in S\}$ is an $F$-biinvariant finite symmetric generating set and we claim that $(\Gamma,FSF)$ does not have a discrete isometry group. Indeed, any permutation of $\Gamma$ which preserves all left $F$-cosets is an isometry of $(\Gamma,FSF)$. This shows that the isometry group of $(\Gamma,FSF)$ contains the compact infinite group $\prod_{x \in \Gamma/F} \mathrm{Sym}(x)$, where $\mathrm{Sym}(x)$ is the group of permutations of the finite set $x$. 
\end{proof}
Oberve that if $\Isom(X)$ has an infinite compact subgroup, then $\Isom(X)$ cannot be discrete. So this lemma implies that a necessary condition on a finitely generated group $\Gamma$ to have all Cayley graphs with a discrete isometry group (or equivalenty for $\Gamma$ to be USLG-rigid by Theorem \ref{thm:mainIntro}) is that this group is torsion-free.

We will see in Corollary \ref{cor:discreteisometrygroup} that for a large class of groups (the groups appearing in Corollary \ref{thm:lattices}), being torsion-free is also a sufficient condition for all their Cayley graphs to have a discrete isometry group. 

In a slightly different direction (Proposition \ref{prop:discreteIsometriesCayleyGraph} and Theorem \ref{thm:discrete}) we prove that many groups admit a Cayley graph with discrete isometry group. 

Let us now turn our attention to the case of lattices in semisimple Lie groups and groups of polynomial growth. Our goal is to prove Corollary \ref{thm:lattices}. Let $\Gamma$ be as in Corollary \ref{thm:lattices}.  In order to apply Theorem \ref{thm:mainIntro}, one needs to show that the isometry group of any Cayley graph of $\Gamma$ is discrete.


We shall use the following easy fact, showing a converse to Lemma \ref{lem:discreteimpliestorsionfree}.

\begin{lem}\label{lem:nonfinitenormalsubgroup}
Let $\Gamma$ be an infinite, torsion-free finitely generated group, and let $S$ be a finite symmetric generating subset of $\Gamma$. Then the isometry  group of $X=(\Gamma,S)$ has no non-trivial compact normal subgroup.
\end{lem}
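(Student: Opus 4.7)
The plan is to analyze a hypothetical non-trivial compact normal subgroup $K$ of $G := \Isom(X)$ through its orbit of the identity vertex. I will identify the vertex set of $X$ with $\Gamma$ via the left translation action, so that $\Gamma \subset G$ acts simply transitively on vertices; let $e \in \Gamma$ denote the identity vertex.

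The crucial step will be to show that the orbit $F := K \cdot e \subset \Gamma$ is a finite subgroup of $\Gamma$. Finiteness is automatic: since $K$ is compact and $X$ is locally finite, $F$ is a bounded subset of the vertex set, hence finite, and clearly $e \in F$. The key point is closure under the group multiplication of $\Gamma$. Given $\gamma = k \cdot e$ and $\gamma' = k' \cdot e$ in $F$, I will compute
$$\gamma\gamma' = \gamma \cdot \gamma' = (\gamma k') \cdot e = (\gamma k' \gamma^{-1})\cdot (\gamma \cdot e) = k'' \cdot \gamma = (k''k)\cdot e \in F,$$
where $k'' := \gamma k' \gamma^{-1}$ belongs to $K$ \emph{precisely because} $K$ is normal in $G$. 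Any finite subset of a group containing the identity and closed under multiplication is a subgroup (the powers of any element must eventually repeat), so $F$ is a finite subgroup of $\Gamma$.

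Since $\Gamma$ is torsion-free, this forces $F = \{e\}$, i.e., $K$ fixes the vertex $e$. Then for every $\gamma \in \Gamma$, normality gives $K = \gamma K \gamma^{-1}$, so $K$ also fixes $\gamma \cdot e$; as $\Gamma$ acts transitively on vertices, $K$ fixes every vertex of $X$, hence acts trivially on $X$. The whole argument is short, and the only subtle point is the closure of $F$ under multiplication of $\Gamma$, which is exactly where the normality hypothesis enters in a non-trivial way; the rest is bookkeeping combining compactness (giving finiteness of $F$) with torsion-freeness (ruling out non-trivial finite subgroups).
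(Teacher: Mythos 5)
Your proof is correct and follows essentially the same route as the paper's: both arguments exploit the finiteness of the $K$-orbit of a vertex, use normality to conjugate $K$ past elements of $\Gamma$, and invoke torsion-freeness (together with the freeness of the $\Gamma$-action on vertices) to conclude. The only cosmetic difference is that you package the orbit $K\cdot e$ as a finite subgroup of $\Gamma$, whereas the paper directly exhibits a single non-trivial torsion element preserving the finite set $Kx$.
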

\begin{proof}
Let $G=\Isom(X)$, and assume for a contradiction that $G$ admits a non-trivial compact normal subgroup $K$. Then there exists a vertex $x$ whose $K$-orbit $Kx$ (which is finite by the definition of $K$ being compact) contains a vertex $y$ distinct from $x$. Since $\Gamma$ acts transitively, there exists $g\in \Gamma$ such that $gx=y$. Since $K$ is normalized by $g$, we deduce that $gKx=Kgx=Ky=Kx$. In particular, the orbit $\{ g^n x, n \in \Z\}$ is contained in the finite set. Since $\Gamma$ acts freely, this implies that $g$ has finite order: contradiction.
\end{proof}

Let us denote by $\C$ the class of finitely generated groups satisfying the following property: $\Gamma\in \C$ if every locally compact totally disconnected group $G$ containing $\Gamma$ as a uniform lattice has an open compact normal subgroup. Observe that a finitely presented group $\Gamma\in \C$ which is torsion-free is USLG-rigid by Theorem \ref{thm:mainIntro} and Lemma \ref{lem:nonfinitenormalsubgroup}. 

Recall the following result of Furman. 

\begin{thm}  \cite{Furman} \label{furman}
Let $\Gamma$ be an irreducible lattice in a connected semisimple real Lie group $G$ with finite center and no compact factor (in case $G$ is locally isomorphic to $\PSL(2,\R)$, we assume that $\Gamma$ is uniform). Let $H$ be a locally compact totally disconnected group such that  $\Gamma$ embeds as a lattice in $H$. Then there exists a finite index subgroup $H_0$ of $H$ containing $\Gamma$, and a compact open normal subgroup $K$ of $H_0$ such that $H_0/K\simeq \Gamma$. In particular, $\Gamma$ belongs to $\C$.
\end{thm}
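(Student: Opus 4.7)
The plan is to follow Furman's approach via measure equivalence couplings and Zimmer's cocycle superrigidity; this is a deep result that I would not reprove from scratch, but I can describe its skeleton. As a first reduction, since $Z(G)$ is finite I would replace $G$ by $G/Z(G)$, which changes $\Gamma$ only by a finite normal subgroup that can be absorbed into the final compact $K$, so one may assume $G$ is connected semisimple with trivial center and no compact factors. Then the two lattice embeddings $\Gamma\hookrightarrow G$ and $\Gamma\hookrightarrow H$ combine into a diagonal lattice embedding $\Gamma\hookrightarrow G\times H$, so $\Omega:=(G\times H)/\Gamma$ is a $(G,H)$-bi-space with finite invariant measures on the quotients $\Omega/G\cong H/\Gamma$ and $\Omega/H\cong G/\Gamma$. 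This is precisely a measure equivalence coupling between $G$ and $H$.

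The core step is to analyze the Borel cocycle $\alpha\colon G\times(H/\Gamma)\to H$ obtained from the $G$-action on a fundamental domain of the $H$-action on $\Omega$. Zimmer's cocycle superrigidity, applied factor-by-factor using irreducibility of $\Gamma$ and with the refinements of Monod-Shalom and Bader-Furman covering the rank one cases $\mathrm{SO}(n,1)$ and $\mathrm{SU}(n,1)$ (the exclusion of the non-uniform $\PSL(2,\mathbf{R})$ case in the hypothesis is exactly to dodge the one place where all available superrigidity statements fail), shows that $\alpha$ is measurably cohomologous to a continuous homomorphism $\pi\colon G\to H$. Because $G$ is connected while $H$ is totally disconnected, any such $\pi$ is trivial, so after a measurable coboundary $\alpha$ takes values in a compact subgroup of $H$.

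The last step is to convert this measurable information into the announced structural statement. The set of $h\in H$ whose translation action on $\Omega$ commutes, modulo the compact data trivializing $\pi$, with the $G$-action is an open subgroup $H_0\subset H$ of finite index that contains $\Gamma$; the kernel of its action on the $G$-orbit space of $\Omega$ is a compact normal subgroup $K\subset H_0$, and $H_0/K$ acts simply transitively on an essentially free $G$-orbit, which forces $H_0/K\simeq\Gamma$. The main obstacle is the superrigidity step itself: producing a genuine continuous homomorphism from a Borel cocycle, and then upgrading the measurable triviality of $\pi$ to a genuine finite index discrete quotient of $H$, requires Zimmer's full machinery (amenable reduction, Borel density, measurable boundary theory) together with a separate and delicate treatment of the non-uniform and rank one cases. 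This is the technical heart of Furman's theorem and also the reason why the exception for non-uniform lattices locally isomorphic to $\PSL(2,\mathbf{R})$ has to be built into the hypothesis.
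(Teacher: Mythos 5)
You should first be aware that the paper does not prove this statement at all: Theorem \ref{furman} is quoted verbatim from Furman \cite{Furman} and used as a black box to deduce Corollary \ref{thm:lattices}, so there is no internal argument to compare yours against. Judged on its own terms, your outline does capture the strategy of Furman's paper: the two lattice embeddings give a measure equivalence coupling $\Omega=(G\times H)/\Gamma$, superrigidity for the associated cocycle produces a continuous homomorphism $G\to H$ up to a compact correction, connectedness of $G$ against total disconnectedness of $H$ forces that homomorphism to be trivial, and the exclusion of non-uniform lattices in groups locally isomorphic to $\PSL(2,\R)$ is genuinely forced (such lattices are virtually free and have highly non-rigid lattice envelopes, e.g.\ inside automorphism groups of trees).

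That said, what you have written is a description of a proof, not a proof: every load-bearing step (cocycle superrigidity with locally compact totally disconnected targets, the amenable-reduction and boundary machinery, and above all the passage from a \emph{measurable} reduction of the cocycle to a compact subgroup to the \emph{topological} conclusion that $H$ contains a finite-index open subgroup $H_0\supset\Gamma$ with a compact normal $K$ and $H_0/K\simeq\Gamma$) is deferred. Two points deserve correction even at the level of a sketch. First, the attribution is off: the cited paper handles the rank-one cases via Mostow rigidity (whence its title), not via the later Monod--Shalom or Bader--Furman measure-equivalence rigidity theorems, which in any case do not apply as stated to all lattices in $\mathrm{SO}(n,1)$ and $\mathrm{SU}(n,1)$; note also that Zimmer's theorem as usually stated concerns algebraic targets, so its application to a general totally disconnected $H$ already requires justification. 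Second, your final paragraph, which extracts $H_0$ and $K$ as ``the set of $h$ whose action commutes modulo the compact data,'' is not an argument: in Furman's proof the compact normal subgroup comes from the essential range of the reduced cocycle, and establishing that the relevant subgroup is open, of finite index, and normalizes a compact subgroup is precisely where the work lies. Since the paper treats this as an external result, the appropriate ``proof'' here is the citation; your text should either do the same or actually reproduce Furman's argument, and in particular the membership of $\Gamma$ in the class $\C$ is then immediate from the displayed structure of $H_0$ and $K$.
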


Regarding groups with polynomial growth, we have the following result of Trofimov.

\begin{thm}\cite{T}\label{trofimov}
Let $X$ be a vertex-transitive graph with polynomial growth. Then its isometry group has a compact open normal subgroup. \end{thm}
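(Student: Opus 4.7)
Let $G=\Isom(X)$. Since $X$ is locally finite and vertex-transitive, $G$ is a compactly generated, locally compact, totally disconnected group, with $K=G_{v_0}$ a compact open vertex stabilizer; polynomial growth of $X$ transfers to polynomial Haar growth of $G$ with respect to a compact generating set, because the $n$-ball in $G$ has Haar measure at most a constant times $|B_X(v_0,n)|$. My plan is to construct a $G$-invariant equivalence relation $\sim$ on $V(X)$ with finite classes such that the induced action of $G$ on the quotient graph $Y=X/{\sim}$ has trivial vertex stabilizers. If such a $\sim$ exists, the kernel $N$ of $G\to \mathrm{Aut}(Y)$ is exactly the setwise stabilizer $\mathrm{Stab}_G([v_0])$ of the block containing $v_0$: any $g$ stabilizing this block projects to a vertex stabilizer in $\mathrm{Aut}(Y)$, hence is trivial. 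This setwise stabilizer is an extension of a subgroup of $\mathrm{Sym}([v_0])$ by the pointwise fixator $\bigcap_{v\in [v_0]} G_v$, which is compact (as a closed subgroup of $K$) and open (as a finite intersection of open subgroups); hence $N$ is the desired compact open normal subgroup.

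\textbf{Construction of the equivalence relation.} I would consider the collection of $G$-invariant equivalence relations on $V(X)$ with finite classes. The polynomial growth hypothesis should enter through a size-control lemma: if $\sim_1$ and $\sim_2$ are two such relations with class sizes bounded by some fixed $N$, then the equivalence relation generated by $\sim_1 \cup \sim_2$ still has finite classes, with class size controlled by a function of $N$ and the growth function of $X$. Combined with a separate argument giving an absolute upper bound on the class sizes of any $G$-invariant equivalence relation with finite classes (again from polynomial growth), this allows one to pick a $\sim$ whose classes have maximal possible size. Maximality then forces $\mathrm{Aut}(X/{\sim})$ to have trivial vertex stabilizers: any non-trivial isometry of $X/{\sim}$ fixing $[v_0]$ could be combined with $G$-invariance to enlarge $[v_0]$ into a larger $G$-invariant block, contradicting maximality. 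The main obstacle is precisely these two size estimates; this is where Trofimov's original argument is most technical and where polynomial growth does its real work.

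\textbf{Deducing membership in $\C$.} For the ``in particular'' statement, let $\Gamma$ be a finitely generated group of polynomial growth embedded as a uniform lattice in a totally disconnected locally compact group $H$. Then $H$ is compactly generated and acts continuously, properly, and cocompactly on a locally finite vertex-transitive Cayley--Abels graph $Y_H$, as reviewed in the preliminaries; being quasi-isometric to $\Gamma$, the graph $Y_H$ has polynomial growth. Applying the first part to $Y_H$ yields a compact open normal subgroup $N$ of $\Isom(Y_H)$. Since $H$ embeds as a closed subgroup of $\Isom(Y_H)$, the intersection $H\cap N$ is compact in $H$ (closed subgroup of the compact $N$), open in $H$ (intersection of $H$ with an open set), and normal in $H$ (being the intersection of $H$ with a subgroup normal in the ambient group), proving $\Gamma \in \C$.
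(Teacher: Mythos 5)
The paper does not actually prove this statement: it is quoted from Trofimov \cite{T} and used as a black box, so there is no internal proof to compare against. Judged on its own terms, your proposal has a genuine gap at its core. The reduction in your first paragraph is fine: if one produces a $G$-invariant partition of $V(X)$ into finite blocks on whose quotient $G$ acts with trivial (or even just finite) vertex stabilizers, then the kernel of $G\to \Isom(X/{\sim})$ is a finite-index closed subgroup of the setwise stabilizer of a block, hence compact and open, and it is normal as a kernel. But the existence of such a block system is precisely the content of Trofimov's theorem, and your second paragraph only names the two estimates that would be needed (an absolute upper bound on the block sizes of invariant finite-block partitions, and stability of finiteness of blocks under joins) and then explicitly defers them: ``this is where Trofimov's original argument is most technical and where polynomial growth does its real work.'' Neither estimate is at all obvious --- for instance, it is not clear a priori why a vertex-transitive graph of polynomial growth cannot carry invariant finite-block partitions with unboundedly large blocks --- and without them no maximal block system can be exhibited, nor is it shown that maximality forces trivial (or finite) vertex stabilizers on the quotient. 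As it stands the proposal is a plan for a proof, not a proof.

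A secondary, fixable issue in the ``in particular'' step: the action of $H$ on its Cayley--Abels graph $Y_H$ need not be faithful; its kernel is $W=\bigcap_{h\in H} hKh^{-1}$, a compact normal subgroup that is in general not open, so $H$ does not literally embed as a closed subgroup of $\Isom(Y_H)$. You should instead pass to the image $H/W$ (which is closed because the action is proper), intersect it with the compact open normal subgroup $N$ of $\Isom(Y_H)$, and pull back to $H$: the preimage is open and normal in $H$, and compact because it is a closed extension of a compact group by a compact group. With that correction, the deduction of $\Gamma\in\C$ from the first part is sound.
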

Although the following is not required for the proof of Corollary \ref{thm:lattices}, we record it for the sake of completeness. 
\begin{cor}
 Finitely generated groups with polynomial growth belong to $\C$.
 \end{cor}
 \begin{proof} 
Let $\Gamma$ be a finitely generated group with polynomial growth and let $G$ be a  totally disconnected locally compact group containing $\Gamma$ as a uniform lattice. Since $\Gamma$
 is finitely generated, $G$ is compactly generated. Let $X$ be a Cayley-Abels graph for $G$ (see \S \ref{sec:Cayley-Abels}), and denote by $\phi: G\to \Isom(X)$ the action map. This morphism is continuous and proper, so that $\phi$ has compact kernel and closed cocompact image. The restriction of $\phi$ to $\Gamma$ has finite kernel, so that $\phi(\Gamma)$ is a uniform lattice in $\Isom(X)$. By the \v{S}varc-Milnor Lemma, $X$ has polynomial growth. By Trofimov's theorem we deduce that $\Isom(X)$ contains a normal compact open subgroup $U$. Hence $\phi^{-1}(U)$ is a compact open normal subgroup of $G$, so we are done.    
\end{proof}

Together with Lemma \ref{lem:nonfinitenormalsubgroup}, we obtain
\begin{cor}\label{cor:discreteisometrygroup}
Let $X$ be a Cayley graph of some finitely generated torsion-free group $\Gamma$ which either has polynomial growth, or is as in Theorem \ref{furman}. Then the isometry group of $X$ is discrete. 
\end{cor}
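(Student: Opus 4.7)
\medskip
\noindent\textbf{Proof plan for Corollary \ref{cor:discreteisometrygroup}.}
The plan is to apply the class-$\C$ mechanism together with Lemma \ref{lem:nonfinitenormalsubgroup}. Let $X=(\Gamma,S)$ and set $G=\Isom(X)$, equipped with the compact-open topology. First I would verify the structural setup: since $X$ is locally finite, the stabilizer $K$ of a fixed vertex $v_0$ is a compact open subgroup of $G$ and the collection of pointwise stabilizers of finite subsets of $X$ forms a basis of neighborhoods of the identity, so $G$ is locally compact and totally disconnected. The action of $\Gamma$ on itself by left translation realizes $\Gamma\hookrightarrow G$, and this embedding is simply transitive on vertices. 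In particular $G=\Gamma K$, so $\Gamma$ is cocompact in $G$, and $\Gamma\cap K=\{1\}$ with $K$ open gives that $\Gamma$ is discrete in $G$. Hence $\Gamma$ is a uniform lattice in the totally disconnected locally compact group $G$.

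Next I would invoke membership in $\C$. If $\Gamma$ is a lattice as in Theorem \ref{furman}, then Theorem \ref{furman} asserts $\Gamma\in\C$; if $\Gamma$ has polynomial growth, Theorem \ref{trofimov} asserts $\Gamma\in\C$ (and in fact gives the conclusion directly for $G=\Isom(X)$). By the defining property of $\C$ applied to the inclusion $\Gamma\subset G$ constructed above, $G$ admits a compact open normal subgroup $N$.

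Finally I would eliminate $N$ via Lemma \ref{lem:nonfinitenormalsubgroup}. The group $\Gamma$ is infinite (it contains an element of infinite order, hence is infinite), torsion-free, and finitely generated by $S$. The lemma then forces the only compact normal subgroup of $G$ to be trivial, so $N=\{1\}$. Since $N$ is open, $\{1\}$ is open in $G$, i.e.\ $G$ is discrete, which is the desired conclusion.

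I expect no serious obstacle: the main content has already been extracted into Theorems \ref{furman} and \ref{trofimov} and Lemma \ref{lem:nonfinitenormalsubgroup}. The only point requiring some care is the verification that $\Gamma$ is a uniform lattice in the (a priori only topological) group $G=\Isom(X)$, i.e.\ that $G$ is totally disconnected locally compact and that the embedding $\Gamma\hookrightarrow G$ is discrete and cocompact; this is a standard consequence of local finiteness of $X$ and of the fact that $\Gamma$ acts simply transitively on vertices, so it amounts to a routine topological check.
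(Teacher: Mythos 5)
Your proposal is correct and follows essentially the same route as the paper: realize $\Gamma$ as a uniform lattice in the totally disconnected locally compact group $\Isom(X)$, use Furman's or Trofimov's theorem (membership in the class $\C$) to produce a compact open normal subgroup, and then apply Lemma \ref{lem:nonfinitenormalsubgroup} to conclude that this subgroup is trivial, hence $\Isom(X)$ is discrete. The paper states this in one line ("Together with Lemma \ref{lem:nonfinitenormalsubgroup}, we obtain..."); your write-up simply fills in the standard verification that $\Gamma$ is a discrete cocompact subgroup of $\Isom(X)$.
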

\begin{rem}  In \cite[Corollary 1.5]{Furman} this Corollary  for $\Gamma$ as in Theorem \ref{furman} was stated without the hypothesis that it is torsion-free. This hypothesis is necessary as explained in Lemma \ref{lem:discreteimpliestorsionfree}.
\end{rem}

We now end the proof of 
Corollary \ref{thm:lattices}. Let $X$ be a Cayley graph of a torsion-free finitely generated group $\Gamma$ with polynomial growth. Since $\Gamma$ is finitely presented, $X$ is large-scale simply connected. Moreover, by Theorem \ref{trofimov}, $\Isom(X)$ has a compact,
open, normal subgroup which is trivial by Lemma \ref{lem:nonfinitenormalsubgroup}. Hence $\Isom(X)$ is discrete, so $X$ is USLG-rigid by Theorem \ref{thm:mainIntro}. 

The proof for lattices in semisimple Lie groups is similar (recall that it is a classical fact that they are finitely presented, see for example \cite{Witte}). Let $G$ be a semisimple connected real Lie group with finite center $Z(G)$, and $\Gamma \subset G$ be an irreducible lattice. Recall that semisimple means that the Lie algebra is a finite direct sum of simple (without ideal) Lie algebras. By the torsion-free hypothesis $\Gamma$ does not intersect $Z(G)$, so is a lattice in $G/Z(G)$. We can therefore assume that $G$ is center-free. Then $G$ is the direct product of the center-free simple Lie groups corresponding to the simple factors $G_i$ of the Lie algebra, and similarly taking the quotient by the product of the compact factors we can assume that $G$ does not have compact factors. For completeness, let us also recall that the lattice being irreducible means here that its image in $\prod_{j \neq i} G_j$ is dense for every $i$. It remains to observe the cases that are excluded by Furman correspond to virtually free groups, \emph{i.e.} that non-uniform lattices in $\PSL(2,\R)$ (the only semisimple Lie group with trivial center and locally isomorphic $\PSL(2,\R)$) are virtually free. This is well-known, but we sketch a proof for completeness. By Selberg's theorem \cite{Sel}, up to taking a finite index subgroup, $\Gamma$ is torsion free, and therefore acts freely on the hyperbolic plane. The quotient of $\mathbb{H}^2/\Gamma$ is a non-compact hyperbolic surface, which is therefore homeomorphic to a punctured surface of genus $\geq 1$. It is then an easy exercise to check that its fundamental group, i.e.\ $\Gamma$ is a free non-abelian finitely generated group.

\section{Graphs that are not LG-rigid: Theorem  \ref{thm:2covering_notNormal2} for a direct product}\label{section:counterexampleGraph}

As explained in the introduction,  Theorem  \ref{thm:2covering_notNormal2} has a simpler proof in the case when $\widetilde H=H \times \Z/2\Z$. 
Let us first briefly explain its main ideas. First, the assumption that $H^2(G,\Z/2\Z)$ is infinite implies that there are infinitely many non-isomorphic central extensions of $G$ by $\Z/2\Z$. By choosing a generating set of $G$ and lifting it back to the corresponding central extension, we obtain infinitely many (a priori pairwise distinct) 2-sheeted graph coverings (for short: $2$-coverings) of the corresponding Cayley graphs of $G$.  Such a covering will be trivial (namely with two connected components) exactly when the central extension is trivial. The fact that for any $r>0$, there exists a non-trivial covering such that the ball of radius $r$ around the identity in $G$ has disconnected preimage ensures that there are infinitely many pairwise non isomorphic coverings.

So far we have a collection of Cayley graphs of central extensions of $G$,  the one corresponding to $\widetilde G$ being disconnected.
The second step consists in ``gluing" $2$-coverings of left-cosets of $G$ in $H$ in the following way: given an edge $S$ between two elements $g$ and $g'$ in $G$ that belong to different cosets of $H$, we throw in all four edges between the preimages of $g$ and $g'$. It will be enough for our purposes to take the trivial covering for all cosets except the one corresponding to $G$.
This gives us a $2$-covering\footnote{At this point ``covering" has a looser meaning: it only means that every element of $H$ has exactly two preimages. However, since the purpose of this paragraph is to explain the heuristic, we stick to this terminology for simplicity.} of $H$ (which is connected). If we pick a covering of $G$ that is trivial at large scale, then the resulting covering of $H$ coincides on a ball of large radius with the Cayley graph of $\widetilde H$ (Lemma \ref{lem:2covering_RlocallyX0}).  This provides us with an infinite collection of such coverings that are $r$-locally $\widetilde H$ for arbitrary large $r$. We are left with proving that none of  these $2$-coverings is isometric to $\widetilde H$. What turns out to be clear is that there are no such isometries that commute with the covering map (Lemma \ref{lem:no_isoX0Xq}). So an important part of the proof consists in choosing a special Cayley graph of $H$ which is sufficiently rigid so that isometries of any such 2-coverings of $H$ commute with the covering map. 

\

Before explaining this in detail, let us briefly recall the definition of $H^2(G,\Z/2\Z)$ and its connection with central extensions (see \cite{Br}).

Let $A$ be an abelian group (denoted additively). A central extension of a group $G$ (denoted multiplicatively) by $A$ is an extension
\[ 1\to A \to E \to G \to 1\]
where the image of $A$ lies in the center of $E$.  Let us recall that two extensions
\[ 1 \to A \xrightarrow{i_1} E_1 \xrightarrow{\tau_1} G \to 1,\ \ 1 \to A \xrightarrow{i_2} E_2 \xrightarrow{\tau_2} G \to 1\]
are called isomorphic if there is a group isomorphism $\varphi \colon E_1 \to E_2$ such that $\tau_2 \circ \varphi = \tau_1$ and $\varphi\circ i_1=i_2$ (note that the second condition follows from the first one when $A=\Z/2\Z$). Let us point out that, when the group of automorphisms of $A$ is trivial (for example when $A=\Z/2\Z$), all the extensions are central because the conjugation by an element of $E$ induces on $A$ a group automorphism.

Let us recall how the cohomology group $H^2(G,A)$ parametrizes the central extensions of $G$ by $A$. The group $H^2(G,A)$ is defined as the quotient of $Z^2(G,A)$, the set of functions $\varphi \colon G^2 \to A$ such that $\varphi(1,1)=0$ and $\varphi(g_1,g_2g_3)+\varphi(g_2,g_3) = \varphi(g_1g_2,g_3)+\varphi(g_1,g_2)$, viewed as an abelian group with pointwise operation, by its subgroup $B^2(G,A)$ of coboundaries, \emph{i.e.} maps of the form $(g_1,g_2)\mapsto \psi(g_1)+\psi(g_2)-\psi(g_1 g_2)$ for some function $\psi\colon G \to A$ such that $\psi(1)=0$. Every $\varphi \in Z^2(G,A)$ gives rise to a central extension 
\[ 1 \to A \to E \to G \to 1\]
together with a (set-theoretical) section $s\colon G \to E$ by setting $E = A \times G$ with the group operation $(a,g_1)(b,g_2) = (a+b+\varphi(g_1,g_2),g_1 g_2)$, and $s(h)=(0,h)$. Conversely every central extension $E$ of $G$ by $A$ and section $s$ of $G$ in $E$ give rise to an element of $Z^2(G,A)$, by setting $\varphi(g_1,g_2) = s(g_1) s(g_2) s(g_1 g_2)^{-1}$. Lastly two elements in $Z^2(G,A)$ give isomorphic extensions if and only if they differ by an element in $B^2(G,A)$.

\begin{lem}\label{lem:fromH2_to_coverings} Let $G$ be a group with a finite symmetric generating set $S$. If $H^2(G,\Z/2\Z)$ is infinite, there is a sequence of $2$-coverings $q_n\colon Y_n \to (G,S)$ such that $Y_n$ is connected but $q_n^{-1}(B_S(x,n))$ is disconnected for all $x \in G$.
\end{lem}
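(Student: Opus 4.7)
The plan is to describe $2$-coverings of the Cayley graph $(G,S)$ in terms of group homomorphisms on its fundamental group, and then to exploit the assumption that $H^2(G,\Z/2\Z)$ is infinite via a finite-codimension argument in cohomology.

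Set $F = F_S$ and $N = \ker(F \to G)$. The Cayley graph $(F,S)$ is a tree, and the natural map $(F,S) \to (G,S)$ is the universal cover, so $\pi_1((G,S),e) \cong N$. A connected double covering of $(G,S)$ corresponds to a nonzero homomorphism $\chi \colon N \to \Z/2\Z$, and this cover admits a lift of the left $G$-translation action on $(G,S)$ if and only if $\chi$ is invariant under the conjugation action of $F$ on $N$. In this equivariant case the preimage $q_\chi^{-1}(B_S(x,n))$ has the same isomorphism type as a cover for every $x \in G$, so it suffices to analyze what happens at $x=e$. Let $L_n \subset N$ be the subgroup generated by loops in $(G,S)$ based at $e$ and contained in $B_S(e,n)$---equivalently, the image of $\pi_1(B_S(e,n),e)$ in $N$. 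Then $q_\chi^{-1}(B_S(e,n))$ splits as two copies of $B_S(e,n)$ if and only if every short loop lifts to a loop, if and only if $\chi|_{L_n}=0$.

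Two finiteness ingredients will conclude the argument. First, $B_S(e,n)$ is a finite graph, so its fundamental group is a free group of finite rank; hence $L_n$ is finitely generated and $\mathrm{Hom}(L_n,\Z/2\Z)$ is finite. Second, the assumption that $H^2(G,\Z/2\Z)$ is infinite, together with the five-term inflation-restriction sequence
\[0 \to H^1(G,\Z/2\Z) \to H^1(F,\Z/2\Z) \to \mathrm{Hom}(N,\Z/2\Z)^{F} \to H^2(G,\Z/2\Z) \to H^2(F,\Z/2\Z)\]
(the rightmost term vanishing because $F$ is free), forces $\mathrm{Hom}(N,\Z/2\Z)^{F}$ to be infinite, since $H^1(F,\Z/2\Z) \cong (\Z/2\Z)^{|S|}$ is finite.

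Combining these, the restriction map $\mathrm{Hom}(N,\Z/2\Z)^{F} \to \mathrm{Hom}(L_n,\Z/2\Z)$ has infinite source and finite target, hence infinite kernel. So for each $n$ there exists a nonzero $F$-invariant homomorphism $\chi_n \colon N \to \Z/2\Z$ vanishing on $L_n$, and the corresponding double covering $Y_n \to (G,S)$ is connected (since $\chi_n\neq 0$) while $q_n^{-1}(B_S(x,n))$ is disconnected for every $x\in G$ (by the translation invariance and $\chi_n|_{L_n}=0$). The main technical point to verify carefully is the correspondence between $G$-equivariant double covers of the Cayley graph and $F$-invariant $\Z/2\Z$-valued homomorphisms of $N$---this follows from the Galois correspondence applied to the tree cover $(F,S) \to (G,S)$, together with a compatibility check between the deck transformations of this cover and the left translation action of $G$ on $(G,S)$.
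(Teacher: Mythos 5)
Your argument is correct, but it takes a genuinely different route from the paper. The paper never leaves the cocycle picture: since $H^2(G,\Z/2\Z)$ is infinite, $B^2(G,\Z/2\Z)$ has infinite codimension in $Z^2(G,\Z/2\Z)$ and therefore cannot contain the finite-codimension subspace of cocycles vanishing on $\{(g_1,g_2): |g_1|_S+|g_2|_S\le n\}$; a non-coboundary $\varphi_n$ in that subspace gives a central extension $E_n$ whose Cayley graph for the lifted generators $\{(0,s)\}$ is the desired cover, connected because $\varphi_n\notin B^2$ and split over $n$-balls because the set-theoretic section $g\mapsto(0,g)$ is multiplicative there. You instead go through covering-space theory: double covers of $(G,S)$ correspond to homomorphisms $\chi\colon N\to\Z/2\Z$ on $N=\pi_1((G,S))$, the five-term exact sequence converts the infiniteness of $H^2(G,\Z/2\Z)$ into infiniteness of $\mathrm{Hom}(N,\Z/2\Z)^{F}$ (the transgression has infinite image because $H^2(F,\Z/2\Z)$ is small), and the finiteness of $\mathrm{Hom}(L_n,\Z/2\Z)$ plays exactly the role of the paper's finite-codimension condition. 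The two proofs are avatars of one another --- ``$\varphi_n$ vanishes on pairs of short elements'' is the cocycle version of ``$\chi_n$ kills short loops'', and ``not a coboundary'' is ``$\chi_n\neq 0$'' --- but yours makes conceptually transparent \emph{why} $H^2$ controls equivariant double covers of the Cayley graph, while the paper's is more elementary, needs no exact sequence, and directly exhibits $Y_n$ as a Cayley graph of a central extension, which is what its subsequent Remark records and what matches the constructions in the rest of that section.

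Two points in your write-up need tidying, though neither threatens the architecture. First, if $F_S$ literally denotes the free group on the set $S$, then $(F_S,S)$ is not the universal cover of $(G,S)$ and $\ker(F_S\to G)$ is not $\pi_1((G,S))$ (the generators $s$ and $s^{-1}$ label a single edge downstairs but two distinct edges upstairs; already for $G=\Z$, $S=\{\pm1\}$ the Cayley graph is simply connected while $\ker(F_2\to\Z)$ is infinitely generated). You must take $F$ to be the free product of one copy of $\Z$ for each pair $\{s,s^{-1}\}$ with $s\neq s^{-1}$ and one copy of $\Z/2\Z$ for each involution in $S$. Second, with that $F$ the group is free only when $S$ has no involutions, so $H^2(F,\Z/2\Z)$ need not vanish; it is, however, always finite, which is all your exact-sequence argument actually uses.
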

\begin{rem} Actually the graphs $Y_n$ in this Lemma are Cayley graphs of extensions of $G$ by $\Z/2\Z$.
\end{rem}
\begin{proof} First we claim that for all $n \geq 1$ there exists $\varphi_n \in Z^2(G,\Z/2\Z)$ which is not a coboundary and such that $\varphi_n(g_1,g_2) = 0$ if $|g_1|_S+|g_2|_S \leq n$. This follows from linear algebra considerations: $Z^2(G,\Z/2\Z)$ can be viewed as vector space over the field with $2$ elements, and our assumption that $H^2(G,\Z/2\Z)$ is infinite means that $B^2(G,\Z/2\Z)$ is an infinite codimensional subspace. It does therefore not contain the finite codimensional subspace made of the elements $\varphi \in Z^2(G,\Z/2\Z)$ that vanish on $\{(g_1,g_2), |g_1|_S+|g_2|_S \leq n\}$.

If $n \geq 2$ and $\varphi_n$ is as above, consider $E_n$ the central extension of $G$ by $\Z/2\Z$ constructed from $\varphi_n$ and define $S_n = \{(0,s), s \in S\}$. If $s \in S$, since $\varphi_n(1_H,s) = 0$, the unit of $E_n$ is $(0,1_G)$ and since $\varphi_n(s,s^{-1}) = 0$, we have that $(0,s)^{-1} = (0,s^{-1})$. The set $S_n$ is therefore a finite symmetric set in $E_n$, and the quotient map $q_n \colon E_n \to G$ induces a $2$-covering $q_n \colon Y_n \to (G,S)$. The assumption on $\varphi_n$ implies that $q_n^{-1}(B_S(1_G,n))$ is the disjoint union of $\{0\}\times B_S(1_G,n)$ and $\{1\}\times B_S(1_G,n) $; in particular it is disconnected. By transitivity $q_n^{-1}(B_S(x,n))$ is disconnected for all $x \in G$. To prove the lemma it remains to observe that $Y_n$ is connected because $\varphi_n$ is not a coboundary.
\end{proof}

Theorem \ref{thm:2covering_notNormal2} now follows from the more general proposition
\begin{prop}\label{prop:2covering_notNormalgraph} Let $G$ be a group with a finite symmetric generating set, and assume that there is a sequence of $2$-coverings $q_n \colon Y_n \to (G,S)$ satisfying the conclusion of Lemma \ref{lem:fromH2_to_coverings}. Then for every finitely presented group $H$ containing $G$ as a proper subgroup, there is a Cayley graph $X_0$ of $H \times \Z/2\Z $ that is not LG-rigid.
\end{prop}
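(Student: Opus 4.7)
Take a finite symmetric generating set $T$ of $H$ containing $S$, and let $X_0$ be the Cayley graph of $H\times\Z/2\Z$ for the generating set $\{(t,0):t\in T\}\cup\{(1_H,1)\}$, so that $X_0$ is the Cartesian product $(H,T)\,\square\,K_2$. The plan is, for every $n\ge 1$, to produce a graph $Z_n$ on the vertex set $H\times\Z/2\Z$ which is $R_n$-locally isomorphic to $X_0$ for some $R_n\to\infty$ but is not covered by $X_0$; this disproves LG-rigidity of $X_0$.

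To construct $Z_n$, fix a system $\mathcal{R}$ of left coset representatives for $G$ in $H$, so every $h\in H$ writes uniquely as $h=r\cdot g(h)$ with $r\in\mathcal{R}$ and $g(h)\in G$. The guiding idea is to replace, above each left $G$-coset $rG$, the vertical fiber $(rG)\times\Z/2\Z\subset X_0$ by a copy of the connected $2$-cover $Y_n$ (with its vertical edges added back), leaving the edges across cosets unchanged. Accordingly, the edges of $Z_n$ are: vertical edges $(h,0)\sim(h,1)$; twisted $S$-edges $(h,\epsilon)\sim(hs,\epsilon+\varphi_n(g(h),s))$ for $s\in S$; and untwisted $(T\setminus S)$-edges $(h,\epsilon)\sim(ht,\epsilon)$. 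Normalizing $\varphi_n$ so that $\varphi_n(g,1)=\varphi_n(1,g)=\varphi_n(s,s^{-1})=0$ makes these edges symmetric in their endpoints, and by construction the subgraph of $Z_n$ induced on each $(rG)\times\Z/2\Z$ is a copy of $Y_n$ with vertical edges attached.

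For the local isomorphism, observe that $(h,\epsilon)\mapsto h$ realises $Z_n$ as a $\Z/2\Z$-bundle over $(H,T)$, and matching $X_0$ on a ball amounts to trivializing this bundle there. The obstruction is the twist $1$-cochain $\alpha$ (vanishing on vertical and $(T\setminus S)$-edges, equal to $\varphi_n(g(h),s)$ on the $S$-edge leaving $(h,\epsilon)$). Loops contained in a single $G$-coset and in a $(G,S)$-ball of radius at most $n$ have trivial total twist by the assumed disconnectedness of $q_n^{-1}(B_S(x,n))$; for loops traversing several cosets, the cocycle identity for $\varphi_n$ lets one express each $S$-segment's contribution purely in terms of its endpoints. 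Choosing $\mathcal{R}$ as shortest representatives yields a bound $C(R)$ on $|g(h)|_S$ uniformly over balls of $(H,T)$-radius $R$, and then taking $n>C(R)$ forces $\alpha$ to be a coboundary on such balls, providing the required identification with balls of $X_0$.

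To rule out a covering $X_0\to Z_n$: such a map would, above the subgraph of $Z_n$ induced on $G\times\Z/2\Z$ (which by construction is $Y_n$ with vertical edges), correspond to a covering by the corresponding subgraph of $X_0$, namely the untwisted bundle $(G,S)\,\square\,K_2$. This would furnish a $\Z/2\Z$-equivariant section of $Y_n\to(G,S)$, trivializing this $2$-cover; but $Y_n$ is connected by hypothesis, so no such section exists. The principal technical obstacle lies in the local-isomorphism step, namely controlling the function $h\mapsto|g(h)|_S$ uniformly on balls of $(H,T)$-radius $R$ despite the possible distortion of $G$ in $H$, which is why the local-isomorphism scale $R_n$ must depend on $n$ through the bound $C$.
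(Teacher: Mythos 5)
Your construction and your local-isomorphism step are essentially the paper's: your distortion bound $C(R)$ is the paper's function $\rho$, and your $Z_n$ is a variant of the paper's $X_{q_n}$ in which every $G$-coset (rather than just $G$ itself) is twisted by $\varphi_n$. The genuine gap is in the final step. You assert that a covering $p\colon X_0\to Z_n$ "would, above the subgraph of $Z_n$ induced on $G\times\Z/2\Z$, correspond to a covering by the corresponding subgraph of $X_0$". This presupposes that $p$ intertwines the two projections to $(H,T)$ --- equivalently, that it sends vertical edges to vertical edges and fibers over $G$-cosets to fibers over $G$-cosets. Nothing in your setup forces this: with an arbitrary $T\supset S$ and the product generating set $(T\times\{0\})\cup\{(1_H,1)\}$, there is no a priori reason an isometry or covering of $X_0$ should preserve the vertical equivalence relation $\sim_0$ (for instance, if $(H,T)$ is triangle-free, then vertical and horizontal edges of your $X_0$ both lie in zero triangles, so the most natural local invariant fails to separate them). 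This is precisely what the paper spends Lemma \ref{lem:choice_of_T} and Corollary \ref{cor:EquivalenceRelationLocallyRec} on: it enlarges $T$ so that $|T\setminus G|$ dominates all triangle counts, and --- crucially --- uses the generating set $\{(e_H,1)\}\cup(S\times\{0\})\cup\bigl((T\setminus S)\times\{0,1\}\bigr)$, with the non-$S$ generators doubled, so that the vertical edge $\{(h,0),(h,1)\}$ lies in exactly $2|T\setminus S|$ triangles, strictly more than any other edge. This makes $\sim_0$ locally recognizable, hence preserved by every partial isometry of large balls, and in particular by every covering from a graph that is $R$-locally $X_0$. Without some such argument your contradiction does not follow.

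A secondary point: the paper does not argue directly with an arbitrary covering. It first shows that the covering $X_0\to X_{q_n}$ must be an isomorphism for large $n$: the graphs $X_{q_n}$ are $(0,1)$-quasi-isometric to $(H,T)$, hence $k$-simply connected for a $k$ independent of $n$ (Theorem \ref{thm:QIinvariance}), and a covering injective on balls of radius $n>k/2$ onto a $k$-simply connected graph is an isomorphism (Lemma \ref{lem:scSameScale}). Only then does it derive the contradiction, via the ``admissible edge set'' connectivity argument of Lemma \ref{lem:no_isoX0Xq}, which has the advantage of not requiring the isomorphism to fix the particular coset $G$. Your equivariant-section argument is a reasonable substitute for that last step, but it additionally needs the induced automorphism of $(H,T)$ to carry some $G$-coset fiber of $X_0$ to a $G$-coset fiber of $Z_n$, which again rests on the fiber-preservation you have not established. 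Fix the generating set as in Lemma \ref{lem:choice_of_T} and prove (or invoke) the recognizability of $\sim_0$, and the rest of your outline can be completed.
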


To prove the Proposition, we complete $S$ into a finite generating set $T$ of $H$ by adding elements of $H\setminus G$ in a way that will be made precise in Lemma \ref{lem:choice_of_T}. This allows to identify the Cayley graph $(G,S)$ as a subgraph of the Cayley graph $(H,T)$. We measure the distortion of $(G,S)$ in $(H,T)$ by the function $\rho(R) = \sup\{ |g|_S \left|  g\in G, |g|_T \leq R \right. \}$.

Consider $X_0$, the Cayley graph of $H \times \Z/2\Z$ for the finite generating set 
\[T' = \{(1_H,1)\} \cup ( S \times \{0\}) \cup ( (T\setminus S) \times \{0,1\} ).\]
Observe that the subgraph with vertex set $G \times \Z/2\Z$ of $X_0$ is the union of two copies of $(G,S)$ where we added edges between pairs of same vertices.

Now if $q \colon Y \to (G,S)$ is another $2$-covering, we can get a new graph denoted $X_q$, by replacing $G \times \Z/2\Z$ inside $X_0$ by $Y$. This means that the vertex set of $X_q$ is the disjoint union of $(H \setminus G) \times \Z/2\Z$ and $Y$, equipped with the natural $2$-to-$1$ map $p \colon V(X_q) \to H$. Two vertices in $(H \setminus G) \times \Z/2\Z$ (two vertices in $Y$) are connected by an edge if they were connected by an edge in $X_0$ (respectively if there were connected by an edge in $Y$ or if they have the same image in $G$), and there is an edge between a vertex in $(H \setminus G) \times \Z/2\Z$ and a vertex in $Y$ if there was an edge between their images in $(H,T)$.

We denote by $\sim_q$ the equivalence relation on the vertex set of $X_q$ where $x\sim_q y$ if $p(x) = p(y)$.

We start by a lemma showing that for each $R>0$, $X_{q_n}$ is $R$-locally $X_0$ for $n$ large enough.
\begin{lem}\label{lem:2covering_RlocallyX0} Let $q\colon Y \to (G,S)$ be a $2$-covering and let $R\in \N$. If the graph $q^{-1}(B_S(x,\rho(2R)))$ is disconnected for all $x \in G$, then $X_q$ is $R$-locally $X_0$.
\end{lem}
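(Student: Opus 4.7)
The overall plan is to construct, for each vertex $v \in X_q$, a graph isomorphism between a thick neighborhood of $v$ in $X_q$ and the corresponding neighborhood in $X_0$, whose restriction to the $R$-ball will be the desired isometry. The disconnectedness hypothesis will be used precisely to trivialize the $2$-covering $Y$ over the relevant portion of $G$, so that the piece of $X_q$ lying over $G$ looks locally like the trivially covered piece of $X_0$.

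First I would verify that the natural $2$-to-$1$ map $p \colon X_q \to H$ is $1$-Lipschitz for the word metric $d_T$ on $H$, by a case-check on the four kinds of edges of $X_q$: edges inside $(H\setminus G)\times\Z/2\Z$ shift $p$ by an element of $T\cup\{e_H\}$ by construction of $X_0$; $Y$-edges shift $p$ by an element of $S\subset T$; ``same image in $G$'' edges fix $p$; and crossing edges exist precisely when the two projections are adjacent in $(H,T)$. Two observations that will be used throughout: $T\setminus S\subset H\setminus G$ by the choice of $T$, so $S$-multiplication preserves the partition $G\sqcup(H\setminus G)$, while any generator $t\in T\setminus S$ sends $G$ to $H\setminus G$ and vice versa. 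As a consequence of $1$-Lipschitzness, every edge-path of length at most $R$ starting at $v$ stays inside $\mathcal{U}_q := p^{-1}(B_{(H,T)}(p(v),R))$, so that $B_{X_q}(v,R)$ with its intrinsic geodesic metric coincides with the $R$-ball around $v$ in the induced subgraph on $\mathcal{U}_q$. The analogous statement holds on the $X_0$ side with $\mathcal{U}_0 := B_{(H,T)}(p(v),R)\times\Z/2\Z$, so it suffices to exhibit a graph isomorphism $\psi \colon \mathcal{U}_q \to \mathcal{U}_0$.

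To define $\psi$, if $B_{(H,T)}(p(v),R)\cap G = \emptyset$ then $\mathcal{U}_q$ is literally inside $(H\setminus G)\times\Z/2\Z$, where $X_q$ and $X_0$ agree by construction, and I would take $\psi$ to be the identity. Otherwise I would pick any $x \in B_{(H,T)}(p(v),R)\cap G$; by the triangle inequality every $g$ in that intersection satisfies $d_T(x,g)\le 2R$, hence $d_S(x,g)\le \rho(2R)$, so that $B_{(H,T)}(p(v),R)\cap G \subset B_S(x,\rho(2R))$. The disconnectedness hypothesis then yields a decomposition $q^{-1}(B_S(x,\rho(2R))) = C_0 \sqcup C_1$, each $C_i$ mapped isomorphically by $q$. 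I would set $\psi$ equal to the identity on $(H\setminus G)\times\Z/2\Z$ and send $y\in C_i\cap\mathcal{U}_q$ to $(q(y),i)$ on the $Y$-part. Both $\mathcal{U}_q$ and $\mathcal{U}_0$ have $2|B_{(H,T)}(p(v),R)|$ vertices, so $\psi$ is automatically a bijection.

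Finally I would check that $\psi$ preserves adjacency. Edges inside $(H\setminus G)\times\Z/2\Z$ match tautologically. Edges inside $Y\cap\mathcal{U}_q$ split into $Y$-edges, which stay inside a single component $C_i$ and correspond to $S$-edges on sheet $i$ of $X_0$, and ``same image'' edges between the two $q$-preimages of a point of $G$, which correspond to the vertical $(e_H,1)$-edges of $X_0$; the absence of non-trivial $t$-edges ($t\in T\setminus S$) within $G\times\Z/2\Z$ in $X_0$, which is forced by $T\setminus S\subset H\setminus G$, ensures that nothing is missed here. The main obstacle, and the subtle point of the argument, is the matching of crossing edges between the two parts: in $X_q$ such an edge exists as soon as the projections are $(H,T)$-adjacent, and this adjacency must be realized by some $t\in T\setminus S$; on the $X_0$ side, the generators $(t,0)$ and $(t,1)$ produce exactly the four crossing edges between the two preimages of $h$ and the two preimages of $g = ht$, so the matching works regardless of the labeling chosen for $C_0, C_1$. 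Once $\psi$ is established as a graph isomorphism, it restricts to an isometry $B_{X_q}(v,R)\to B_{X_0}(\psi(v),R)$, which is exactly the statement that $X_q$ is $R$-locally $X_0$.
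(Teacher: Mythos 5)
Your proposal is correct and follows essentially the same route as the paper: locate the portion of the ball lying over $G$ inside a single ball $B_S(x,\rho(2R))$ via the distortion function, use the disconnectedness hypothesis to split its preimage into two sheets each mapped isomorphically by $q$, and match these with the two copies $G\times\{0\}$ and $G\times\{1\}$ in $X_0$. The only difference is presentational — you work with $p^{-1}$ of an $R$-ball in $(H,T)$ and spell out the edge-by-edge verification that the paper leaves implicit.
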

\begin{proof} Consider a ball of radius $R$ in $X_q$. If it does not contain any vertex in $Y$, it is obvioulsy isometric to the corresponding ball in $X_0$. Otherwise it contains a point $y$ in $Y$, and is therefore contained in the ball $B$ of radius $2R$ around $y$. By the definition of $\rho$ the intersection of $B$ with $Y$ is contained in $q^{-1}(B_S(q(y),\rho(2R))$, which is two disjoint copies of $B_S(q(y),\rho(2R))$ by our assumption. This gives an isometry between the ball of radius $2R$ around $y$ in $X_q$ and a corresponding ball in $X_0$ and proves that $X_q$ is $R$-locally $X_0$.
\end{proof}
\begin{rem}\label{rem:R-locallyX0_over_H} The proof shows that there is an isometry from every ball of radius $R$ in $X_q$ to $X_0$ which sends $\sim_0$ to $\sim_q$.
\end{rem}

The next observation allows to distinguish in some weak sense the graphs $X_{q_n}$ and $X_0$.
\begin{lem}\label{lem:no_isoX0Xq} If $Y$ is connected, there is no isometry between $X_q$ and $X_0$ sending $\sim_q$ to $\sim_0$.
\end{lem}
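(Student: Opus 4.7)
The plan is to argue by contradiction, extracting from $\phi$ an automorphism of the quotient Cayley graph $(H,T)$ and tracking where it sends the subgroup $G$. Suppose such an isomorphism $\phi\colon X_q \to X_0$ exists. Since $\sim_q$ and $\sim_0$ have classes of size $2$ and $\phi$ maps classes to classes, $\phi$ descends to a bijection $\alpha$ between the quotient vertex sets. Via the $2$-to-$1$ maps $p\colon X_q \to H$ and $\pi\colon (h,\varepsilon)\mapsto h$, both quotient graphs identify (after discarding self-loops) with the Cayley graph $(H,T)$. So $\alpha \in \mathrm{Aut}(H,T)$ and $\pi \circ \phi = \alpha \circ p$.

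The crucial step is to show that $\alpha$ preserves the class of $S$-edges. For an edge $\{h,hs\}$ of $(H,T)$ with $s \in S$, the only generator in $T'$ with first coordinate $s$ is $(s,0)$, so $\pi^{-1}\{h,hs\}$ consists of exactly $2$ edges of $X_0$; similarly $p^{-1}\{h,hs\}$ has exactly $2$ edges of $X_q$, coming either from the two lifts in the $2$-cover $Y$ or, on the $(H\setminus G)$-part, from the corresponding $S$-edges of $X_0$. For an edge $\{h,ht\}$ with $t \in T\setminus S$, both $(t,0)$ and $(t,1)$ lie in $T'$, so each preimage has $4$ edges. Since $\phi$ bijects edges compatibly with the projections, this multiplicity invariant forces $\alpha$ to send $S$-edges to $S$-edges. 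Using $T \cap G = S$ (which holds because $T\setminus S \subset H\setminus G$ by construction), the connected components of the $S$-edge subgraph of $(H,T)$ are exactly the left cosets of $G$, each a translate of the connected Cayley graph $(G,S)$. Hence $\alpha$ permutes these cosets, and in particular $\alpha(G) = h_0 G$ for some $h_0 \in H$.

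It follows that $\phi(Y) = \phi(p^{-1}(G)) = \pi^{-1}(h_0 G) = h_0 G \times \Z/2\Z$. A direct inspection of $X_0$ shows that the subgraph induced on $h_0 G \times \Z/2\Z$ consists of two disjoint copies of $(G,S)$, on $h_0 G \times \{0\}$ and $h_0 G \times \{1\}$, joined by the perfect matching of $\sim_0$-edges. Since $\phi$ sends $\sim_q$-edges to $\sim_0$-edges, deleting this matching on both sides turns $\phi|_Y$ into a graph isomorphism from $Y$ viewed as the original $2$-cover of $(G,S)$ (i.e.\ with its added $\sim_q$-edges removed) onto the disjoint union of two copies of $(G,S)$. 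This contradicts the hypothesis that $Y$ is connected.

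The main obstacle is the middle step: identifying an intrinsic invariant of $\phi$ that forces $\alpha$ to respect the subgroup $G$ up to a left translation. The asymmetric design of $T'$, which duplicates every generator from $T\setminus S$ but keeps those from $S$ single, is precisely what makes $S$-edges detectable from the graph structure of $X_0$ (and of $X_q$); once this is noticed the rest is a mechanical unwinding of the definitions.
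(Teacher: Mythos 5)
Your proof is correct, but it takes a genuinely different route from the paper's. The paper introduces the notion of an \emph{admissible} edge set (a set $E$ of edges such that every vertex $x$ has, for each neighbour of $p(x)$ in $(H,T)$, at least one $E$-edge to a preimage of that neighbour); admissibility is preserved by isomorphisms intertwining $\sim_q$ and $\sim_0$, the ``horizontal'' edges of $X_0$ form a disconnecting admissible set, and any admissible set in $X_q$ must contain all edges of $Y$ (because an edge over an $S$-edge of $(H,T)$ is the \emph{unique} edge from its endpoint into the other fibre) and hence keeps $X_q$ connected when $Y$ is. You instead descend $\phi$ to an automorphism $\alpha$ of the quotient graph $(H,T)$, use the $2$-versus-$4$ edge-multiplicity over quotient edges to show $\alpha$ preserves $S$-edges and hence permutes left $G$-cosets, and then read off that $\phi$ would carry the connected cover $Y$ onto the trivial (disconnected) double of $(G,S)$ sitting over $h_0G$ in $X_0$. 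Both arguments exploit exactly the same structural asymmetry of $T'$ (generators from $S$ appear once, those from $T\setminus S$ twice); the paper's packaging is more economical and never needs to analyse the induced map on $(H,T)$, while yours is more informative -- it localises the contradiction and in effect shows that such a $\phi$ would induce an isomorphism of $2$-coverings of $(G,S)$, which is the stronger statement used elsewhere (Lemma \ref{lem:Xtilde_remembers_Ytilde}) in the transitive setting. All the steps you take check out against the definitions of $X_q$ and $T'$; in particular the counts of $\pi^{-1}\{h,hs\}$ and $p^{-1}\{h,hs\}$ are as you claim, and the induced subgraph of $X_0$ on a coset fibre $h_0G\times\Z/2\Z$ is indeed two copies of $(G,S)$ plus the $\sim_0$-matching.
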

\begin{proof}
Let us say that a subset $E$ of the edge set of $X_q$ is admissible if it has the property that for every vertex $x\in X_q$, every neighbor of $p(x)$ in $(H,T)$ has a preimage $y$ by $p$ such that $\{x,y\} \in E$. 

We claim that $X_0$ admits an admissible edge set which makes $X_0$ disconnected, but that $X_q$ does not admit such an admissible edge set if $Y$ is connected. This claim implies the Lemma because an isometry  between $X_q$ and $X_0$ sending $\sim_q$ to $\sim_0$ would send an admissible subset of edges to an admissible subset of edges.

The first claim is very easy, as we can just take for $E$ the set
\[ E = \{ \{(x,i),(y,j)\} \textrm{ edge of }X_0| i=j\}.\]
For the second claim, take an admissible edge subset $E$. Since $(H,T)$ is connected, every vertex in $X_q$ can be connected to an edge of $Y$ by a sequence of edges in $E$. Also, observe that if $\{x,y\}$ is an edge in $X_q$ that corresponds to an edge $S$ in $(H,T)$, \emph{i.e.} if $p(x)^{-1} p(y) \in S$, then $\{x,y\}$ is the only edge between $x$ and an element of $p^{-1}(p(y))$. This implies that $\{x,y\}\in E$ because $E$ is admissible. In particular $E$ contains all edges in $Y$. This shows that if $Y$ is connected, $X_q$ with edge set $E$ remains connected, as announced.
\end{proof}

The last step is to observe that for a well-chosen $T$, an isometry between $X_0$ and $X_q$ necessarily sends $\sim_0$ on $\sim_q$ (at least if $q$ has a large injectivity radius). The proof will rely on the notion of triangles in a graph. This notion will appear several times in later sections, in particular in the proof of Theorem  \ref{thm:discrete}, see also Lemma \ref{lem:Xtilde_remembers_Ytilde}.
\begin{defn}\label{def:triangles}
  A \emph{triangle} in a graph is a set consisting of $3$ vertices that pairwise connected by an edge.

  If $S$ is a finite symmetric generating subset of a group $G$, and $s \in S\setminus\{1_G\}$, we denote by $N_3(s,S)$ the number of triangles in the Cayley graph $(G,S)$ containing the vertices $1_G$ and $s$.
\end{defn}
  We start by 

\begin{lem}\label{lem:choice_of_T} Let $G=<S> \subsetneq H$ be as in Proposition \ref{prop:2covering_notNormalgraph}. There is a symmetric generating set $T \subset H$ such that $T \cap G=S$ and every isometry of $X_0$ preserves $\sim_0$, where $X_0$ is the Cayley graph of $H \times \Z/2\Z$ for the finite generating set 
\[T' = \{(1_H,1)\} \cup ( S \times \{0\}) \cup ( T\setminus S \times \{0,1\} ).\]
\end{lem}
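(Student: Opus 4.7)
My plan is to choose $T = S \cup V$ with $V \subset H\setminus G$ symmetric, sufficiently large and generic that in $X_0$ the swap edges $\{(h,0),(h,1)\}$ are characterized graph-theoretically as the unique edges whose two endpoints share the maximal number of common neighbors. Since the common-neighbor count of an edge is invariant under isometries, every isometry of $X_0$ will then preserve the set of swap edges, which is precisely the data of $\sim_0$.

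To construct $V$ I would enforce the following properties:
\begin{enumerate}
\item[(a)] $V$ is symmetric and contained in $H\setminus G$;
\item[(b)] $S\cup V$ generates $H$;
\item[(c)] $SV\cap V=\emptyset$;
\item[(d)] $VV\cap(S\cup V)=\emptyset$;
\item[(e)] $|V|>|S|$.
\end{enumerate}
A greedy argument produces such a $V$. One starts with any finite symmetric generating set of $H$ containing $S$ whose non-$S$ part lies in $H\setminus G$ (any generating set can be brought into this form by replacing its $G$-elements by their $S$-decomposition), and then enlarges $V$ one pair $\{v,v^{-1}\}$ at a time, each time choosing $v$ outside the finite set of ``bad'' values dictated by $S$ and the current $V$. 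The hypotheses of Theorem \ref{thm:2covering_notNormal} force $G$ to be infinite (since $H^2(G,\Z/2\Z)$ is infinite), so $H\setminus G$ is infinite and a suitable $v$ always exists.

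The core technical step is a case analysis for the number of common neighbors of the two endpoints of each edge in $X_0$. By vertex-transitivity of the $(H\times\Z/2\Z)$-action it suffices to consider edges incident to $(e_H,0)$, of which there are four types: the swap edge to $(e_H,1)$; an $S$-edge to $(s,0)$ with $s\in S$; a $V$-edge to $(v,0)$ with $v\in V$; and a $V$-edge to $(v,1)$ with $v\in V$. Enumerating candidate common neighbors $(h,j)$ and using that $S\subset G$ while $V\subset H\setminus G$ (which automatically disposes of many cases on coset grounds), together with (c) and (d), one obtains: the swap edge has exactly the $2|V|$ common neighbors $\{(v,i):v\in V,\,i\in\{0,1\}\}$; an $S$-edge $(e_H,0){-}(s,0)$ has common neighbors contained in $\{(s',0):s'\in S\cap sS\}$, hence at most $|S|$; each $V$-edge has exactly two common neighbors, namely $(e_H,1)$ and the $\sim_0$-partner of the other endpoint. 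Since $2|V|>|S|\geq 2$, the swap edges are exactly the edges maximizing this invariant, so every isometry of $X_0$ permutes them among themselves and hence preserves $\sim_0$.

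The main obstacle is the case analysis itself: it is tedious rather than deep, and for each candidate common neighbor $(h,j)$ one has to verify that the corresponding multiplicative identity (such as $h=sv$, $h=vv'$ with matching parities) is inconsistent with $V\subset H\setminus G$ or with (c),(d). Conditions (c) and (d) are designed precisely to rule out the otherwise-available common neighbors of non-swap edges, so that the swap edges dominate.
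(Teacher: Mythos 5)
Your core mechanism is exactly the one the paper uses: the number of common neighbours of the two endpoints of an edge is the number of triangles containing that edge (the paper's $N_3$), it is an isometry invariant, and with your conditions (c), (d), (e) the case analysis correctly shows that the swap edges are precisely the edges maximizing it ($2|V|$ versus at most $|S|$ for $S$-edges and $2$ for $V$-edges), so isometries preserve $\sim_0$. That part is fine and matches the paper's computation $N_3((e_H,1),T')=2|T\setminus S|$ versus $N_3((t,\eps),T')\le 2+2N_3(t,T)$.

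The gap is in the construction of $V$: the set of ``bad'' values excluded at each greedy step is \emph{not} finite in general. Condition (d), $VV\cap(S\cup V)=\emptyset$, applied to the product $v\cdot v$, forces $v^2\notin S\cup V$ and in particular $v^2\neq v^{-1}$, i.e.\ $v^3\neq e_H$, for every $v\in V$; similarly (c) forces $v^2\notin S$ via $sv=v^{-1}$. The sets $\{v\in H: v^3=e_H\}$ and $\{v\in H: v^2\in S\}$ need not be finite, and can even exhaust $H\setminus G$ (e.g.\ for $H=G\rtimes\Z/3\Z$ with the action chosen so that every element outside $G$ has order $3$, as happens for $\Z^2\rtimes\Z/3\Z$), in which case (d) is unsatisfiable with $V\neq\emptyset$. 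So the greedy step ``choose $v$ outside the finite set of bad values'' is not justified, and as stated the construction fails for some pairs $G<H$.

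The repair is essentially what the paper does: do not insist that the new generators create \emph{no} triangles, only that adjoining a pair $\{h,h^{-1}\}$ with $|h|$ large does not increase the triangle count of the old generators and contributes only a bounded number of triangles itself; then instead of your exact counts one gets $N_3(t,T)\le M$ for all $t$ while $|T\setminus G|$ can be made arbitrarily large, and the strict inequality $2|T\setminus G| > 2+2\max_t N_3(t,T)$ replaces your $2|V|>\max(|S|,2)$. Equivalently, you could weaken (c) and (d) to allow boundedly many incidences, since each accidental relation $sv\in V$ or $vw\in S\cup V$ only adds $O(1)$ to the relevant count, which is then absorbed by taking $|V|$ large. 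Without such a relaxation the proof does not go through.
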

\begin{proof} To lighten the notation, let us denote $T^*= T\setminus \{1_H\}.$

First pick an arbitrary finite symmetric generating set $T_1 \subset H$ such that $T_1 \cap G=S$. Let $M= \max_{t \in T_1} N_3(t,T_1)$. Observe that replacing $T_1$ by $T_1 \cup \{h,h^{-1}\}$ for $h \in H \setminus G$ of word-length $|h|_{T_1}>3$ does not change the function $N_3(\cdot,T_1)$ but increases the cardinality of $|T_1 \setminus G|$. Also, such an $h$ exists because our assumption on $G$ implies that $G$ is infinite, and therefore $H \setminus G$ also. Therefore there exists a finite symmetric generating subset $T \subset H$ such that $T \cap G=S$ and such that $\max_{t \in T^*} N_3(t,T) +1 < |T \setminus G|$.

On the other hand,  one checks that $N_3((1_H,1),T') = 2 |T \setminus S|$, whereas $N_3((t,\varepsilon),T') \leq 2+2 N_3(t,T)$ for every $(t,\varepsilon) \in T^* \times \{0,1\}$. The previous formula therefore implies that $N_3((1_H,1),T')> N_3(t',T')$ for every $t' \in T' \setminus \{(1_H,1),(1_{H},0)\}$. This means that the $\Z/2\Z$ cosets in $H \times \Z/2\Z$ are characterized in $X_0$ as the pairs of vertices that belong to exactly $2 |T\setminus S|$ triangles in $X_0$. The conclusion follows.
\end{proof}
 
We deduce by a straightforward compactness argument from the previous lemma that given some $r>0$,  there exists $R>0$ such that for every partial isometry between two balls $\phi:B(x,R)\to B(x',R)$, the restriction of $\phi$ to $B(x,r)$ preserves $\sim_0$. This implies the following 

\begin{cor}\label{cor:EquivalenceRelationLocallyRec}
  Let $G,H,S,T$ be as in Lemma \ref{lem:choice_of_T}. For all $r>0$, there exists $R>0$ such that for all $Y$ which is $R$-locally $X_0$, there exists a unique equivalence relation $\sim$ on the vertex set of $Y$ such that for all $x\in X$ and $y\in Y$, the restriction to $B(x,r)$ of some partial isometry $\phi:B(x,R)\to B(y,R)$ satisfies
  \[\phi(x_1) \sim \phi(x_2) \iff x_1 \sim_0 x_2\]
  for every  $x_1,x_2 \in B(x,r)$.
\end{cor}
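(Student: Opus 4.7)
The plan is to first prove a compactness statement internal to $X_0$, then transfer it to $Y$ via the $R$-local structure.

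\textbf{Step 1 (compactness in $X_0$):} I claim that for every $r > 0$ there exists $R_1 \geq r$ such that for all $x, x' \in X_0$ and every partial isometry $\psi : B_{X_0}(x, R_1) \to B_{X_0}(x', R_1)$, one has $a \sim_0 b \Leftrightarrow \psi(a) \sim_0 \psi(b)$ for all $a, b \in B(x, r)$. If not, we obtain a sequence of partial isometries $\psi_n : B(x_n, n) \to B(x'_n, n)$ violating this property on $B(x_n, r)$. Pre- and post-composing by isometries of $X_0$ (which preserve $\sim_0$ by Lemma \ref{lem:choice_of_T}) and using cocompactness of $\Isom(X_0)$, we may assume the basepoints $x_n, x'_n$ lie in a fixed finite set; a diagonal extraction then yields a global isometry of $X_0$ that does not preserve $\sim_0$, contradicting Lemma \ref{lem:choice_of_T}.

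\textbf{Step 2 (definition of $\sim$ on $Y$):} Set $R = R_1$ and let $Y$ be $R$-locally $X_0$. For each $y \in Y$, fix an isometry $\phi_y : B_Y(y, R) \to B_{X_0}(x_y, R)$, and for $y_1, y_2 \in Y$ with $d(y_1, y_2) \leq r$, declare $y_1 \sim y_2$ iff $\phi_{y_1}(y_1) \sim_0 \phi_{y_1}(y_2)$. Step 1 ensures this is independent of the choice of $\phi_{y_1}$ (two such choices differ by a partial isometry of $X_0$ between $R_1$-balls, which preserves $\sim_0$ on the image of $B(y_1, r)$), and by the same reasoning the answer is symmetric in $y_1, y_2$. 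Since $\sim_0$-classes in $X_0$ are pairs of adjacent vertices, the same holds locally for $\sim$; transitivity of the form ``$y_1 \sim y_2 \sim y_3 \Rightarrow y_1 = y_3$'' transports directly from the analogous fact in $X_0$, so $\sim$ extends uniquely to an equivalence relation on all of $Y$.

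\textbf{Step 3 (uniqueness):} Any equivalence relation $\sim'$ on $Y$ satisfying the intertwining property, applied to the partial isometry $\phi_y^{-1} : B_{X_0}(x_y, r) \to B_Y(y, r)$, forces $y_1 \sim' y_2 \Leftrightarrow \phi_y(y_1) \sim_0 \phi_y(y_2) \Leftrightarrow y_1 \sim y_2$ for all $y_1, y_2 \in B(y, r)$, whence $\sim' = \sim$. The main obstacle is the compactness argument of Step 1, which essentially relies on Lemma \ref{lem:choice_of_T}; once it is known that $\sim_0$ is locally detectable in $X_0$, the transfer to $Y$ is purely formal.
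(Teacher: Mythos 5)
Your proof is correct and follows essentially the same route as the paper, which itself only records your Step 1 (the compactness argument deducing from Lemma \ref{lem:choice_of_T} that partial isometries between $R$-balls of $X_0$ preserve $\sim_0$ on $r$-balls) and then asserts that the corollary follows; your Steps 2 and 3 just make that deduction explicit. The only quibble is quantitative: when comparing $\phi_{y_1}$ and $\phi_{y_2}$ for two nearby basepoints in Step 2, the composite $\phi_{y_2}\circ\phi_{y_1}^{-1}$ is only defined on a ball of radius $R-r$, so you should take $R=R_1+r$ (or apply Step 1 with $2r$ in place of $r$) rather than $R=R_1$.
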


We can now complete the proof of Proposition \ref{prop:2covering_notNormalgraph}. Consider $X_{q_n}$, the graph constructed from the $2$-covering $q_n \colon Y_n \to (G,S)$ given by the assumption of Proposition \ref{prop:2covering_notNormalgraph}, with $T$ given by Lemma \ref{lem:choice_of_T}. Lemma \ref{lem:2covering_RlocallyX0} implies that $X_{q_{\rho(2n)}}$ is $n$-locally $X_0$. Hence any covering map $\phi:X_0\to X_{q_{\rho(2n)}}$ must be injective in restriction to balls of radius $n$.

Observe that the preimages of the surjective graph morphism $p_n: X_{q_n}\to (H,T)$ have diameter $1$. It follows that $p_n$ is a $(1,1)$-quasi-isometry, so that by Theorem \ref{thm:QIinvariance}, there exists $k\in \N$ such that $ X_{q_n}$ is $k$-simply connected for all $n$.
Hence, by Proposition \ref{prop:equivalence_of_lssc}, for $n$ large enough, $\phi_n$ is an isometry. By Remark \ref{rem:R-locallyX0_over_H} and Corollary \ref{cor:EquivalenceRelationLocallyRec}, $\phi_n$ must send $\sim_0$ to $\sim_q$. This is a contradiction with Lemma \ref{lem:no_isoX0Xq}. This implies that $X_0$ is not LG-rigid and concludes the proof.

\section{Graphs that are not LG-rigid: Theorem \ref{thm:2covering_notNormal2} and \ref{thm:2covering_Normal2}}\label{section:counterexampleGraphNormal}

We now move to  Theorem \ref{thm:2covering_notNormal2} and \ref{thm:2covering_Normal2}, which will follow from the results in \S \ref{section:discreteisom} and from
\begin{thm}\label{thm=2coverings} Let $G \subset H$ be finitely generated groups, and $T$ a finite generating set of $H$ such that $S:=G \cap T$ generates $G$.
  
There exists $C \in \N$ such that the following holds. For every extension
\[1 \to \Z/2\Z \to G_{\tau} \xrightarrow{\tau} G \to 1\]
and symmetric subset $S_{\tau}\subset G_{\tau}$ such that $\tau$ maps $S_\tau$ bijectively onto $S$, we can associate a graph $X_{\tau}$ such that
\begin{enumerate}
\item\label{item=cayley} If $\tau$ comes from an extension
  \[1 \to \Z/2\Z \to \widetilde H \xrightarrow{\tau} H \to 1\]
  then $X_\tau$ is a Cayley graph of $\widetilde H$.
\item\label{item=4Lipschitz} For any two extensions $\tau,\tau'$ and any $S_\tau,S_{\tau'}$, the graphs $X_{\tau}$ and $X_{\tau'}$ are $4$-Lipschitz equivalent.
\item\label{item=Rmodeled} For every $R\in \R_+$, there exists $R_1\in \R_+$ such that for all  $(\tau,S_\tau)$ and $(\tau',S_{\tau'})$, the graph $X_{\tau'}$ is $R$-locally $X_{\tau}$ whenever the covering $(G_{\tau'},S_{\tau'}) \to (G,S)$ is $R_1$-locally\footnote{See Remark \ref{rem:thm}.} the covering $(G_{\tau},S_{\tau}) \to (G,S)$.
\item\label{item=automorphismgroup} If $\max_{t \in T}| tT \cap T| < |T|-|S|-1$, then the isometry group of $X_\tau$ is isomorphic as a topological group to an extension of a subgroup of the isometry group of $(H,T)$ by the compact group $(\Z/2\Z)^{H/G}$.
\item\label{item=finite_to_one} If $\max_{t \in T}| tT \cap T| < |T|-|S|-1$ and $(G,S)$ has a discrete isometry group, then the number of isomorphism classes of extensions $\tau'$ and $S_{\tau'}$ such that $X_{\tau'}$ is isometric to some given $X_{\tau}$ is at most $C$.
\item\label{item=transitive} If $G$ is normal in $H$ and $H$ splits as a semi-direct product $G \rtimes H/G$, then the isometry group of $X_\tau$ acts transitively.
\end{enumerate}
\end{thm}
\begin{rem}\label{rem:thm} In (\ref{item=Rmodeled}), we exceptionally allow a less restrictive notion of graph than in the rest of the paper, as we do not request that $S_\tau$ generates $G_\tau$. In that case $(G_\tau,S_\tau)$ is the disconnected graph without multiple edges nor loops with vertex set $G_\tau$ and with a vertex between $x,y$ if $x^{-1}y \in S_\tau$.

In (\ref{item=Rmodeled}) for a graph $Y$ and two coverings $q_1\colon \widetilde Y^{(1)} \to Y$ and $q_2\colon \widetilde Y^{(2)} \to Y$ we say that $q_1$ is $R_1$-locally $q_2$ if for every ball $B$ of radius $R_1$ in $Y$, there is an isometry $\phi$ between $q_1^{-1}(B)$ and $q_2^{(-1)}(B)$ such that $q_2 \circ \phi = q_1$.
\end{rem}

In particular, it follows from (\ref{item=cayley}) and (\ref{item=4Lipschitz}), and Theorem \ref{thm:QIinvariance} that there exists $k$ such that all $X_{\tau}$ are $k$-simply connected.

The rest of this section is devoted to the proof of this theorem, which is very similar to the proof of Proposition \ref{prop:2covering_notNormalgraph}, but involves significantly more work to ensure that items (\ref{item=finite_to_one}), (\ref{item=automorphismgroup}) and (\ref{item=transitive}) hold. For the trivial extension, $X_{\tau_0}$ coincides with the graph $X_0$ from \S \ref{section:counterexampleGraph}. For general $\tau$, the graph $X_{\tau}$ is obtained by copying above \emph{every} $G$-coset in $H$ a copy of the Cayley graph $(\widetilde G_{\tau},\widetilde S_{\tau})$, and adding in a suitable way edges (that we call \emph{outer} and \emph{vertical} edges) between different copies.  We first study this construction for general graphs, and then specialize to Cayley graphs.

\subsection{The construction in terms of graphs}
Let $X,Y$ be connected graphs, and assume that the vertex set of $X$ is partitionned as $X=\sqcup_{i\in I} Y_i$ into subgraphs that are each isometric to $Y$, and fix an isometry $f_i \colon Y \to Y_i$ for each $i \in I$. Assume that we are given a $2$-covering $q \colon \widetilde Y \to Y$. Note that $\widetilde Y$ does not need to be connected: in other words, the covering can be trivial. We define a graph $\widetilde X$ by putting over each $Y_i$ a copy $\widetilde Y_{i}$ of $\widetilde Y$, and connecting two vertices in $\widetilde Y_{i}$ and $\widetilde Y_{j}$ either if their images in $X$ are equal, or if $i \neq j$ and their images in $X$ are connected. Formally, the set of vertices of $\widetilde X$ is $\widetilde Y \times I$, and there are three types of edges: 
\begin{enumerate}
\item \emph{inner edges}: there is an edge between $(\widetilde y,i)$ and $(\widetilde y',i)$ if there is an edge between $\widetilde y$ and $\widetilde y'$ in $\widetilde Y$. 

\item \emph{vertical edges:} We put an edge betweeen $(\widetilde y,i)$ and $(\widetilde y',i)$ if $\widetilde y \neq \widetilde y'$ and $q( \widetilde y) = q(\widetilde y')$.

\item \emph{outer edges}: if $i \neq j$, there is an edge between $(\widetilde y,i)$ and $(\widetilde y',j)$ if and only if there is an edge in $X$ between $f_i(q (\widetilde y))$ and $f_j(q (\widetilde y'))$. 
\end{enumerate}

Then $\widetilde Y_{i}$ is $\widetilde Y \times \{i\}$, and there is a natural ``projection'' map $\widetilde X \to X$ sending $(\widetilde y,i)$ to $f_i(q(y))$.

We start by a lemma that will be used to show (\ref{item=4Lipschitz}) in Theorem \ref{thm=2coverings}. The rest of this subsection will be a series of Lemma studying the isometries of $\widetilde X$.

\begin{lem}\label{lem=2Lipschitz} If $\widetilde Y$, $\widetilde Y'$ are $2$-coverings of $Y$ and $\widetilde X,\widetilde X'$ are obtained by the above contruction, then any bijection $f \colon \widetilde X\to \widetilde X'$ which commutes with the projections $\widetilde X \to X$ and $\widetilde X' \to X$ is $2$-Lipschitz.
\end{lem}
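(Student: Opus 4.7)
My plan is to verify the Lipschitz bound edgewise. Since being $2$-Lipschitz for a bijection between graphs is equivalent to sending pairs of adjacent vertices to pairs at distance at most $2$, I will take an arbitrary edge $\{u,v\}$ of $\widetilde X$ and show that $d_{\widetilde X'}(f(u),f(v))\leq 2$. The main tool is the compatibility $\pi' \circ f = \pi$ together with the fact that the projection $\pi \colon \widetilde X \to X$ factors through a bijection from each $\widetilde Y_i$ to $Y_i$ modulo the fiber relation, so the index $i$ of the fiber above which a vertex lives can be read off from its image in $X$ (because the $Y_i$ partition $V(X)$). In particular, for $u = (\widetilde y,i)$, we must have $f(u) \in \widetilde Y'_i$.

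I would then split into the three edge types. For a \emph{vertical} edge $\{(\widetilde y, i),(\widetilde y',i)\}$ the two endpoints of $f(u)$ and $f(v)$ lie in $\widetilde Y'_i$, project to the same vertex of $Y_i$, and are distinct because $f$ is a bijection; hence they are joined by a vertical edge in $\widetilde X'$, giving distance $1$. For an \emph{outer} edge $\{(\widetilde y,i),(\widetilde y',j)\}$ with $i\neq j$, one has $f(u) \in \widetilde Y'_i$, $f(v)\in \widetilde Y'_j$, and their images $\pi'(f(u))=\pi(u)$ and $\pi'(f(v))=\pi(v)$ remain adjacent in $X$, so by the very definition of outer edges in $\widetilde X'$, $\{f(u),f(v)\}$ is an outer edge, again distance $1$.

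The only nontrivial case is the \emph{inner} edge $\{(\widetilde y,i),(\widetilde y',i)\}$, where the natural guess that $f$ sends it to an inner edge may fail. Here $f(u) = (\widetilde z,i)$ and $f(v) = (\widetilde z',i)$ satisfy $q'(\widetilde z) = q(\widetilde y)$, $q'(\widetilde z') = q(\widetilde y')$, and the two projections are adjacent in $Y$. Since $q'$ is a $2$-covering (hence a local isometry), there exists $\widetilde z'' \in \widetilde Y'$ adjacent to $\widetilde z$ with $q'(\widetilde z'') = q'(\widetilde z')$. If $\widetilde z'' = \widetilde z'$ we get an inner edge directly; otherwise $\widetilde z''$ and $\widetilde z'$ lie in the same fiber of $q'$ and are distinct, so $(\widetilde z'',i)$ and $(\widetilde z',i)$ are joined by a vertical edge. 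In either case the path $f(u)=(\widetilde z,i)\to(\widetilde z'',i)\to(\widetilde z',i)=f(v)$ has length at most $2$.

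The only step requiring care is this inner-edge analysis, because the construction of $\widetilde X'$ from $\widetilde Y'$ is not entirely local in the fiber direction; but the local-isometry property of the $2$-covering $q'$ provides exactly the lift needed, at the cost of at most one extra vertical jump. Combining the three cases yields $d_{\widetilde X'}(f(u),f(v))\leq 2$ on every edge of $\widetilde X$ and hence the claimed $2$-Lipschitz bound.
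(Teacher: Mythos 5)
Your proof is correct and follows essentially the same route as the paper's: handle edges whose endpoints have equal projections via a vertical edge, and otherwise produce one edge from $f(u)$ into the fiber over the other projection (by lifting through the covering, or by the definition of outer edges) plus at most one vertical correction. The only cosmetic difference is that you split the second case into outer and inner edges, which lets you observe that outer edges are sent exactly to outer edges; the paper treats both subcases uniformly with the same bound of $2$.
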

\begin{proof} Let $\widetilde x_1$ and $\widetilde x_2$ be neighbors in $\widetilde X$. Let $x_1,x_2$ be their images in $X$, which by assumption are also the images of $f(\widetilde x_1), f(\widetilde x_2)$ by the projection $\widetilde X' \to X$. We have to show that $d(f(\widetilde x_1),f(\widetilde x_2)) \leq 2$. 

If $x_1=x_2$, then $f(\widetilde x_1)$ and $f(\widetilde x_2)$ are linked by a vertical edge: $d(f(\widetilde x_1),f(\widetilde x_2)) = 1$.

If $x_1 \neq x_2$, then the edge between $\widetilde x_1$ and $\widetilde x_2$ is an inner or an outer edge, and there is an edge between $x_1$ and $x_2$ in $X$. In particular $f(\widetilde x_1)$ has a least one neighbor $\widetilde  x' \in \widetilde X'$ (and two if the edge is an outer edge) that projects onto $x_2$. If $\widetilde x' = f(\widetilde x_2)$ then $d(f(\widetilde x_1),f(\widetilde x_2)) = 1$. Otherwise there is a vertical edge between $\widetilde x'$ and $f(\widetilde x_2)$ and $d(f(\widetilde x_1),f(\widetilde x_2)) = 2$.
\end{proof}

We will need a simple condition on $Y,X$  ensuring that the isometries of $\widetilde X$ commute with the projection $\widetilde X \to X$. This condition is in terms of triangles (see Definition \ref{def:triangles}). The condition is
\begin{equation}\label{eq:assumption_few_triangles} \textrm{Every edge in $X$ belongs to strictly less than $m_X-M_Y-1$ triangles,}\end{equation}
where $m_X$ is the minimal degree of $X$ and $M_Y$ the maximal degree of $Y$.

\begin{lem}\label{lem:Xtilde_remembers_Ytilde} Assume that \eqref{eq:assumption_few_triangles} holds. Then for every $2$-coverings $q_1\colon \widetilde Y^{(1)} \to Y$ and $q_2\colon \widetilde Y^{(2)} \to Y$ of $Y$ and every isometry $f\colon \widetilde X^{(1)} \to \widetilde X^{(2)}$, there is an isometry $g\colon X\to X$ which permutes the $Y_i$'s, and such that the projections $\widetilde X^{(1)} \to X$ and $\widetilde X^{(2)} \to X$ intertwine $f$ and $g$. 

In particular, if the graphs $\widetilde X^{(1)}$ and $\widetilde X^{(2)}$ are isometric, then the $2$-coverings are isomorphic: there are isometries $\phi \colon Y \to Y$ and $\widetilde \phi \colon \widetilde Y^{(1)} \to \widetilde Y^{(2)}$ such that $\phi \circ q_1 = q_2 \circ \widetilde \phi$.
\end{lem}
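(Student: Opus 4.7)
The plan is to extract, from the graph structure of each $\widetilde X^{(k)}$, the equivalence relation ``having the same image in~$X$'', so that any isomorphism $f\colon \widetilde X^{(1)}\to \widetilde X^{(2)}$ will automatically descend to a bijection on~$X$. The vertical edges form a perfect matching whose quotient is exactly the projection $\widetilde X^{(k)}\to X$, so it suffices to recognize vertical edges by an intrinsic graph invariant. I will use the number of triangles containing a given edge.

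For a vertical edge $\{\widetilde x_1,\widetilde x_2\}$ lying above $x\in Y_i$ with $q_k(\widetilde x_1)=y$, the two endpoints share all outer neighbours (both preimages in $\widetilde Y^{(k)}_j$, $j\neq i$, of every $X$-neighbour of $x$ lying in $Y_j$) but have disjoint inner neighbourhoods, because in a $2$-covering the two edges of $\widetilde Y^{(k)}$ above any given edge of $Y$ pair the preimages crosswise. Hence the number of triangles through such an edge is exactly $2(\deg_X(x)-\deg_Y(y))\geq 2(m_X-M_Y)$. For a non-vertical edge projecting to an edge $\{x_1,x_2\}$ of $X$, a short case analysis on the location of the third vertex (in the same copy as one endpoint, or in a separate copy) shows that the number of containing triangles is bounded by $2\deg_X^{\mathrm{tri}}(\{x_1,x_2\})+2$, which by \eqref{eq:assumption_few_triangles} is strictly less than $2(m_X-M_Y)$. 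Vertical edges are therefore precisely the edges of $\widetilde X^{(k)}$ lying in at least $2(m_X-M_Y)$ triangles, and this is an intrinsic invariant.

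Since $f$ preserves triangle counts it preserves the perfect matching of vertical edges, and therefore descends to a bijection $g\colon X\to X$ which fits into the commutative square with the two projections $\widetilde X^{(k)}\to X$. The map $g$ is a graph isomorphism because $f$ sends non-vertical edges to non-vertical edges. To see that $g$ permutes the family $(Y_i)_{i\in I}$ I would use the following multiplicity invariant of the projection $\widetilde X^{(k)}\to X$: an edge of $X$ internal to some~$Y_i$ is covered by exactly~$2$ edges of~$\widetilde X^{(k)}$ (the two lifts coming from $\widetilde Y^{(k)}\to Y$), while an edge of~$X$ joining distinct $Y_i$ and~$Y_j$ is covered by exactly~$4$ outer edges. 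This multiplicity is transported by~$f$, hence $g$ preserves the set of edges of~$X$ internal to some~$Y_i$, and therefore permutes the connected components of this subgraph, which are precisely the~$Y_i$'s. For the ``in particular'' part, if $\sigma$ denotes the induced permutation of~$I$, restricting~$f$ to $\widetilde Y^{(1)}\times\{i_0\}$ yields an isomorphism $\widetilde\phi\colon \widetilde Y^{(1)}\to \widetilde Y^{(2)}$ onto $\widetilde Y^{(2)}\times\{\sigma(i_0)\}$, and the map $\phi:=f_{\sigma(i_0)}^{-1}\circ g|_{Y_{i_0}}\circ f_{i_0}\colon Y\to Y$ satisfies $\phi\circ q_1=q_2\circ\widetilde\phi$ by unwinding the intertwining relation.

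The hard part will be the triangle count in the first step, where one must control carefully the surplus triangles arising in the non-vertical cases. The danger comes from the ``vertical+outer'' triangles contributing the extra~$+2$ to the outer bound, together with the triangles of $\widetilde Y^{(k)}$ coming from lifts of $Y$-triangles through an inner edge. The two facts that make the argument close are that a $Y$-triangle contributes at most one (not two) triangle of~$\widetilde Y^{(k)}$ through a prescribed inner edge, and that the gap $m_X-M_Y-1$ imposed by \eqref{eq:assumption_few_triangles} is just wide enough to absorb these small contributions.
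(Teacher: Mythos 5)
Your proposal is correct and follows essentially the same route as the paper: both characterize the vertical edges as the edges lying in at least $2(m_X-M_Y)$ triangles (versus strictly fewer for inner/outer edges, via the ``$2+2\cdot(\text{triangles in }X)$'' bound), then use the multiplicity of edges of $\widetilde X^{(k)}$ over an edge of $X$ ($4$ for outer, fewer otherwise) to see that the induced map $g$ preserves the partition into the $Y_i$'s, and finally restrict $f$ to a single copy $\widetilde Y_i$ to get the isomorphism of coverings. The extra care you flag at the end (one versus two lifts of a $Y$-triangle through an inner edge) is not needed, since the crude bound of two preimages of the third vertex already suffices.
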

\begin{proof} Let $k=1$ or $2$. By construction, for every vertical edge between $(\widetilde y,i)$ and $(\widetilde y',i)$, in $\widetilde X^{(k)}$ there are at least as many triangles in $\widetilde X^{(k)}$ containing this edge as outer edges containing $(\widetilde y,i)$. This number is equal to twice the number of neighbors of $f_i(q_k(\widetilde y))$ in $X$ which are not in $Y_i$; in particular this number is at least $2(m_X-M_Y)$. On the other hand, the number of triangles containing an outer or inner edge is at most $2$ (a bound for the number triangles also containing a vertical edge) plus twice the number of triangles in $X$ containing the image of this edge. Hence by our assumption the number of triangles containing an outer or inner edge is strictly less than $2(m_X-M_Y)$. 

If  $f\colon \widetilde X^{(1)} \to \widetilde X^{(2)}$ is an isometry, it sends an edge to an edge belonging to the same number of triangles. By the preceding discussion it sends vertical edges to vertical edges. Therefore $f$ induces an isometry $g$ of $X$. It also sends bijectively outer edges to outer edges because the outer edges in $\widetilde X^{(k)}$ are the edges with the property that there are $3$ other edges in $\widetilde X^{(k)}$ corresponding to the same edge in $X$. This implies that $f$ preserves the partition of $X = \sqcup_{i\in I} Y_i$. Restricting $f$ to the any $\widetilde Y_i$ gives the desired isomorphism.
\end{proof}

The preceding lemma allows to describe the isometry group of $\widetilde X$ as an extension of a subgroup of the isometry group of $X$ by a compact group defined in terms of the Galois group of $q\colon \widetilde Y \to Y$, ie the group of automorphisms $\varphi$ of $\widetilde Y$ such that $q \circ \varphi = q$. Here $\widetilde Y$ is a $2$-covering of a connected graph, hence the Galois group is either $\Z/2\Z$ or trivial.
\begin{lem}\label{lem=automorphism_group} Assume that \eqref{eq:assumption_few_triangles} holds. Let $\widetilde Y$ be a $2$-covering of $Y$ and $\widetilde X$ obtained by the previous construction. 

If $f$ is an isometry of $\widetilde X$, there is a unique isometry $g$ of $X$ such that the projection $\widetilde X \to X$ intertwines $f$ and $g$. If we set $\pi(f) = g$, $\pi$ is a morphism from the isometry group of $\widetilde X$ to the isometry group of $X$ whose kernel is $F^{I}$, where $F$ is the Galois group of $\widetilde Y \to Y$.
\end{lem}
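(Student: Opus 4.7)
The plan is to deduce the existence claim directly from Lemma \ref{lem:Xtilde_remembers_Ytilde}, applied in the special case $\widetilde Y^{(1)}=\widetilde Y^{(2)}=\widetilde Y$: it already produces, for any isometry $f$ of $\widetilde X$, an isometry $g$ of $X$ which permutes the $Y_i$'s and satisfies $p\circ f = g\circ p$, where $p\colon \widetilde X \to X$ is the projection $(y,i)\mapsto f_i(q(y))$. Uniqueness of $g$ is automatic from the surjectivity of $p$. Granting this, the morphism property of $\pi\colon f\mapsto g$ is a purely formal computation: if $\pi(f_j)=g_j$, then $p\circ(f_1\circ f_2) = (g_1\circ g_2)\circ p$, so $\pi(f_1\circ f_2)=g_1\circ g_2$ by uniqueness.

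To compute the kernel, I would first exhibit a natural copy of $F^I$ inside $\ker\pi$. Given $(\phi_i)_{i\in I}\in F^I$, set $\Phi(y,i) = (\phi_i(y),i)$, and verify that $\Phi$ preserves each of the three types of edges: inner edges because each $\phi_i$ is a graph automorphism of the copy of $\widetilde Y$ sitting over $Y_i$; vertical edges because deck transformations preserve the fibers of $q$; and outer edges between $(y,i)$ and $(y',j)$ (with $i\neq j$) because their existence depends only on $q(y)$ and $q(y')$, which are fixed by $\phi_i$ and $\phi_j$. Thus $\Phi$ is an isometry, and it clearly lies in $\ker\pi$.

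Conversely, I would take $f\in\ker\pi$, so that $f$ preserves each $\widetilde Y\times\{i\}$ setwise and satisfies $q\circ f|_{\widetilde Y\times\{i\}} = q$. The key observation is that the full subgraph of $\widetilde X$ induced by $\widetilde Y\times\{i\}$ is exactly (a copy of) $\widetilde Y$, its edges being the inner edges above $Y_i$. Therefore the restriction of $f$ to $\widetilde Y\times\{i\}$ is a graph automorphism commuting with $q$, i.e.\ a deck transformation $\phi_i\in F$. The assignment $f\mapsto (\phi_i)_{i\in I}$ is then visibly a two-sided inverse to the map built in the previous paragraph, identifying $\ker\pi$ with $F^I$.

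The only non-formal input is the existence of $g$, which is already packaged in Lemma \ref{lem:Xtilde_remembers_Ytilde}; no further use of hypothesis \eqref{eq:assumption_few_triangles} is needed. Everything else reduces to bookkeeping about how the three types of edges of $\widetilde X$ interact with fibre-preserving isometries, so I expect no real obstacle beyond carefully checking that arbitrary (and independent) deck transformations on each $\widetilde Y_i$ really do assemble into a global isometry of $\widetilde X$.
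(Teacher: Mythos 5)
Your proof is correct and follows essentially the same route as the paper: existence and uniqueness of $g$ via Lemma \ref{lem:Xtilde_remembers_Ytilde}, the evident fibrewise copy of $F^I$ in $\ker\pi$, and the converse argument that an element of the kernel restricts to a deck transformation on each $\widetilde Y\times\{i\}$. One small imprecision: the induced subgraph of $\widetilde X$ on $\widetilde Y\times\{i\}$ is $\widetilde Y$ \emph{together with} the vertical edges, not $\widetilde Y$ alone; but since any $f\in\ker\pi$ preserves the fibres of $q$, it sends inner edges (whose endpoints lie in distinct fibres) to inner edges, so its restriction is indeed an automorphism of $\widetilde Y$ commuting with $q$.
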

\begin{proof} First, there is a subgroup of the isometry group of $\widetilde X$ isomorphic to $F^{I}$, where $F^{I}$ acts by $(\varphi_{i})_{i \in I}\cdot (\widetilde y, j) = (\varphi_{j}(\widetilde y),j)$.

The existence of $g$ is Lemma  \ref{lem:Xtilde_remembers_Ytilde}, its uniqueness is clear, as is the fact that $\pi$ is a group morphism. It remains to understand the kernel of $\pi$. If $f$ belongs to the kernel of $\pi$, for every $i$ the restriction of $g$ to $\widetilde Y_{i}$ belongs to the Galois group of the cover $\widetilde Y_{i} \to Y_i$. This shows that the kernel of $\pi_0$ is contained in $(F^{I})^N$. The reverse inclusion is obvious. This shows the lemma.
\end{proof}

The last two lemmas isolate conditions on $X$ or on the $2$-covering $\widetilde Y \to Y$ that translate into transitivity properties of the graph $\widetilde X$. 
\begin{lem}\label{lem=transitive_on_I} Assume that \eqref{eq:assumption_few_triangles} holds. If there is a group $G$ acting transitively on $I$ and acting by isometries on $X$ such that $g \circ f_i = f_{gi}$ for all $g \in G,i \in I$, then there is a subgroup $G'$ in the isometry group of $\widetilde X$ such that $\pi(G') = G$ and such that each orbit of $\widetilde X$ under $G'$ meets each $\widetilde Y_{i}$.
\end{lem}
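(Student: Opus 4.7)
The proof should be very direct: I would construct the lift of the $G$-action on $X$ to $\widetilde X$ by letting $G$ permute the ``fibres'' $\widetilde Y_i$ according to how it permutes the $Y_i$, while keeping the ``fibre coordinate'' fixed. Concretely, for each $g \in G$, define $\widetilde g \colon \widetilde X \to \widetilde X$ by
\[ \widetilde g(\widetilde y, i) = (\widetilde y, g\cdot i). \]
The plan is then to verify that $\widetilde g$ is an isometry, that $g \mapsto \widetilde g$ is a group homomorphism lifting $g$, and that the orbit structure is as required.

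The isometry check reduces to checking that the three types of edges are preserved. Inner edges between $(\widetilde y,i)$ and $(\widetilde y',i)$ depend only on $\widetilde y,\widetilde y'$ in $\widetilde Y$, so they are obviously preserved; vertical edges between $(\widetilde y,i)$ and $(\widetilde y',i)$ depend only on whether $q(\widetilde y) = q(\widetilde y')$, so they are likewise preserved. The one step where the hypotheses really enter is for the outer edges: an edge between $(\widetilde y,i)$ and $(\widetilde y',j)$ with $i\ne j$ exists iff $f_i(q(\widetilde y))$ and $f_j(q(\widetilde y'))$ are adjacent in $X$; after applying $\widetilde g$, one must check that $f_{gi}(q(\widetilde y))$ and $f_{gj}(q(\widetilde y'))$ are adjacent. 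This follows directly from the equivariance assumption $g \circ f_i = f_{gi}$ combined with the fact that $g$ acts by isometries on $X$.

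Once $\widetilde g$ is known to be an isometry, the map $g \mapsto \widetilde g$ is immediately a group homomorphism from $G$ into $\Isom(\widetilde X)$, because composition only affects the second coordinate. Moreover, unravelling the projection $\widetilde X \to X$ defined by $(\widetilde y,i)\mapsto f_i(q(\widetilde y))$, one computes that it intertwines $\widetilde g$ with $g$, which by the uniqueness in Lemma \ref{lem=automorphism_group} gives $\pi(\widetilde g)=g$. In particular $g \mapsto \widetilde g$ is injective, so setting $G' = \{\widetilde g : g \in G\}$ yields $\pi(G') = G$.

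Finally, the orbit of $(\widetilde y,i)$ under $G'$ is $\{(\widetilde y, g\cdot i) : g \in G\}$, which by transitivity of $G$ on $I$ contains exactly one point in each $\widetilde Y_j$, so each $G'$-orbit meets every $\widetilde Y_j$. The only place where one must be slightly careful is the outer-edge verification, and that step is made routine by the equivariance hypothesis; there is no real obstacle in the argument.
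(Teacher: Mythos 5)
Your proposal is correct and follows exactly the same route as the paper, which also defines the lift by $(\widetilde y,i)\mapsto(\widetilde y,gi)$ and concludes by transitivity of $G$ on $I$; the paper merely asserts that this map is an isomorphism, whereas you spell out the (routine) verification for the three edge types, with the equivariance $g\circ f_i=f_{gi}$ handling the outer edges as you say.
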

\begin{proof} For $g \in G'$, the map $(\widetilde y,i)\mapsto (\widetilde y,g i)$ is an isometry of $\widetilde X$, sends $\widetilde Y_{i}$ to $\widetilde Y_{gi}$ and belongs to $\pi^{-1}(g)$. One concludes by the assumption that the action of $G'$ on $I$ is transitive.
\end{proof}

\begin{lem}\label{lem=transitive_on_Yi} Assume that \eqref{eq:assumption_few_triangles} holds. Let $G_1$ be a group of isometries of $Y$ and $G_2$ a group of isometries of $X$ with the property that for all $i$ and all $g \in G_1$, there is an isometry $g' \in G_2$ of $X$ that preserves each $Y_j$, such that $f_j^{-1} \circ g' \circ f_j\in G_1$ for all $j$, and $f_i^{-1} \circ g' \circ f_i = g$.

Assume also that there exists a transitive group $\widetilde G_1$ of isometries of $\widetilde Y$ and a surjective group homomorphism $\widetilde G_1 \to G_1$ such that the covering $\widetilde Y \to Y$ intertwines the actions.

Then there is a subgroup $G'_2$ in the isometry group of $\widetilde X$ such that $\pi(G'_2) = G_2$ and which acts transitively on $\widetilde Y_{i}$ for each $i$.
\end{lem}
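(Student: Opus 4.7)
The plan is to lift each $g' \in G_2$ to an isometry of $\widetilde X$ by choosing, copy by copy, a lift in $\widetilde G_1$ of the isometry that $g'$ induces on each $Y_j$. Then $G'_2$ will be the subgroup of $\mathrm{Isom}(\widetilde X)$ generated by all these lifts.

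More precisely, given $g' \in G_2$, the hypothesis provides (by taking $i=j$ and $g = f_j^{-1} \circ g' \circ f_j$, which we may assume lies in $G_1$ for every $j$) that $g'$ preserves each $Y_j$ and the induced isometry $g_j := f_j^{-1} \circ g' \circ f_j$ of $Y$ belongs to $G_1$. Using the surjective morphism $\widetilde G_1 \to G_1$, choose a lift $\widetilde g_j \in \widetilde G_1$ of $g_j$ for every $j \in I$. Define $\widetilde{g'} \colon \widetilde X \to \widetilde X$ by $\widetilde{g'}(\widetilde y, j) = (\widetilde g_j \widetilde y, j)$.

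The next step is to check that $\widetilde{g'}$ is an isometry of $\widetilde X$, which amounts to verifying that the three kinds of edges are preserved. Inner edges are preserved because each $\widetilde g_j$ is an isometry of $\widetilde Y$. Vertical edges are preserved because $\widetilde g_j$ covers $g_j$, so $q(\widetilde g_j \widetilde y) = g_j(q(\widetilde y))$ and injectivity on fibers of $q$ is maintained. Outer edges are preserved because, projecting $\widetilde{g'}(\widetilde y, j)$ to $X$ gives $f_j(g_j q(\widetilde y)) = g'(f_j(q(\widetilde y)))$, so the projection of $\widetilde{g'}$ to $X$ is $g'$, which is an isometry of $X$ by assumption. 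In particular this shows $\pi(\widetilde{g'}) = g'$ with the notation of Lemma \ref{lem=automorphism_group}.

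Now let $G'_2$ be the subgroup of $\mathrm{Isom}(\widetilde X)$ generated by all such lifts $\widetilde{g'}$, as $g'$ ranges over $G_2$ and the lifts $\widetilde g_j$ vary. Since $\pi$ is a homomorphism and each generator projects into $G_2$, we have $\pi(G'_2) \subset G_2$, and the reverse inclusion follows from the existence of a lift of each $g' \in G_2$. It remains to verify transitivity on $\widetilde Y_i$: given $(\widetilde y_1, i), (\widetilde y_2, i) \in \widetilde Y_i$, pick $\widetilde h \in \widetilde G_1$ with $\widetilde h \widetilde y_1 = \widetilde y_2$, let $h \in G_1$ be its image, and apply the hypothesis to find $g' \in G_2$ with $f_i^{-1} \circ g' \circ f_i = h$; lifting $g'$ with the specific choice $\widetilde g_i = \widetilde h$ at index $i$ produces an element of $G'_2$ sending $(\widetilde y_1,i)$ to $(\widetilde y_2,i)$. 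The main (minor) subtlety is only the bookkeeping in checking that outer edges are preserved, i.e.\ that the independent choices of lifts on different copies remain compatible under $g'$; but this is automatic because $g'$ is a single isometry of $X$ and the projection of $\widetilde{g'}$ to $X$ is $g'$ regardless of the chosen lifts.
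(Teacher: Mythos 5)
Your proposal is correct and follows essentially the same route as the paper: lift $g'$ copy by copy via the surjection $\widetilde G_1 \to G_1$, define $(\widetilde y,j)\mapsto(\widetilde g_j\widetilde y,j)$, and get transitivity on $\widetilde Y_i$ by choosing the lift $\widetilde g_i$ to be a prescribed element of the transitive group $\widetilde G_1$. You are merely more explicit than the paper in verifying that the three edge types are preserved and in noting that surjectivity of $\pi|_{G'_2}$ onto $G_2$ uses that every element of $G_2$ is of the form supplied by the hypothesis (a point the paper's proof also leaves implicit).
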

\begin{proof} Fix $(\widetilde y,i)$ and $(\widetilde y',i) \in \widetilde Y_{i}$. We construct an element of $\pi^{-1}(G_2)$ which sends $(\widetilde y,i)$ to $(\widetilde y',i)$. Since $\widetilde G_1$ acts transitively on $\widetilde Y$, there is $\widetilde g \in \widetilde G_1$ such that $\widetilde g \widetilde y = \widetilde y'$. Let $g$ be its image in $G_1$. By the first assumption there is an isometry $g' \in G_2$ that acts as an element $g_j$ of $G_1$ on each $Y_j$, and as $g$ on $Y_i$. Pick $\widetilde g_j \in \widetilde G_1$ in the preimage of the morphism $\widetilde G_1 \to G_1$, with $\widetilde g_{i} = \widetilde g$. Then the map $(\widetilde y,j) \mapsto (\widetilde g_j \widetilde y,j)$ is an isometry of $\widetilde X$ that preserves each $\widetilde Y_{j}$ and sends $(\widetilde y,i)$ to $(\widetilde y',i)$, as required. By construction it belongs to $\pi^{-1}(g')$.
\end{proof}

\subsection{The construction for Cayley graphs}\label{subsection=construction_cayleygraphs}
A particular case of this construction is the following situation. Let $H$ be a finitely generated group, with finite symmetric generating set $T$ not containing $1$. Let $G < H$ be a subgroup such that $S := T\cap G$ generates $G$. Take $X$ the Cayley graph $(H,T)$ and $Y$ the Cayley graph $(G,S)$. The partition of $H$ into left $G$-cosets gives a partition of $X$ into graphs isometric to $Y$, and every (set-theoretical) section $\alpha \colon H/G \to H$ gives rise to a family of isometries $(f_i \colon (G,S)\to (H,T))_{i \in H/G}$ given by $f_i(y) = \alpha(i) y$.

If $\{h,ht\}$ (for $h \in H$ and $t \in T$) is an arbitrary edge in $X$, the number of triangles in $X$ containing this edge is equal to the number of $h' \in H$ such that $h^{-1}h'$ and $t^{-1}h^{-1} h'$ belong to $T$, i.e.\ is equal to the cardinality of $tT \cap T$. Also, every edge in $X$ (respectively $Y$) has degree $|T|$ (respectively $|S|$). Therefore the condition \eqref{eq:assumption_few_triangles} holds if and only if $\max_{t \in T}|tT \cap T|<|T|-|S|-1$. 

We get a $2$-covering $q=q_{\tau}\colon \widetilde Y_\tau \to Y$ as above, for every extension 
\[ 1 \to \Z/2\Z \to G_{\tau} \xrightarrow{\tau} G \to 1\]
together with a symmetric subset $S_\tau \subset G_{\tau}$ mapping bijectively to $S$, by taking $\widetilde Y_\tau$ to be the Cayley graph $(G_{\tau},S_{\tau})$. 

\begin{rem}
Once again, we remark that $S_{\tau_0}=S \times \{0\}$ {\it is not} a generating subset of $G_{\tau_0}$, therefore $(G_{\tau_0},S_{\tau_0})$ is disconnected.
\end{rem}

Denote by $X_\tau$ the graph obtained from $q_\tau \colon \widetilde Y_\tau \to Y$ with the above construction.
\begin{lem}\label{lem:X_tau_Cayley} If $\tau$ is the restriction of an extension
  \[1 \to \Z/2\Z \xrightarrow{\iota} H_\tau \xrightarrow{\tau} H \to 1\]
  then $X_\tau$ is isometric to the Cayley graph of $H_\tau$ for the generating set
  \[ S_\tau \cup \tau^{-1}(T \setminus S) \cup \{ \iota(1)\}.\]
\end{lem}
\begin{proof} For every $i$, let $\widetilde h_i \in H_\tau$ such that $\tau(\widetilde h_i) = \alpha(i)$. The map $(\widetilde g,i) \in X_\tau \mapsto \widetilde h_i \widetilde g \in H_\tau$ is an isometry between $X_\tau$ and the Cayley graph of $H_\tau$ for the generating set
  \[ S_\tau \cup \tau^{-1}(T \setminus S) \cup \{ \iota(1)\}.\]
  Indeed, $S_\tau$ corresponds to inner edges, $\tau^{-1}(T \setminus S)$ to outer edges and $\iota(1)$ to vertical edges.
  \end{proof}

Let us assume that $\max_{t \in T}|tT \cap T|<|T|-|S|-1$. Then we can apply Lemma \ref{lem=automorphism_group}, \ref{lem=transitive_on_I} and \ref{lem=transitive_on_Yi}. This is the content of the next lemmas.

Let $\pi$ be the group morphism from the isometry group of $X_\tau$ to the isometry group of $X$ given by Lemma \ref{lem=automorphism_group}. We regard $H$ as a subgroup of the isometry group of $X$, acting by translation.
\begin{lem}\label{lem=pastedgraph_transitive} If $G$ is a normal subgroup and $H$ splits as a semi-direct product $G\rtimes H/G$, and if $\alpha$ is a group homomorphism, then $X_{\tau}$ is a transitive graph. More precisely, $\pi^{-1}(H)$ acts transitively on $X_\tau$.
\end{lem}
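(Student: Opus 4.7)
The plan is to combine Lemmas \ref{lem=transitive_on_I} and \ref{lem=transitive_on_Yi} to produce two subgroups of $\Isom(X_\tau)$, both contained in $\pi^{-1}(H)$, whose joint action on $X_\tau$ is transitive.

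First I would verify the hypothesis of Lemma \ref{lem=transitive_on_I} using the subgroup $\alpha(H/G) \subset H$ acting on $X = (H,T)$ by left translation. Because $\alpha$ is a group homomorphism, $\alpha(k)\alpha(i) = \alpha(ki)$, and hence $\alpha(k) \circ f_i(y) = \alpha(ki) y = f_{ki}(y)$, which is exactly the compatibility required by the lemma; the action of $\alpha(H/G)$ on $I = H/G$ is clearly transitive. The lemma then furnishes a subgroup $G' \subset \Isom(X_\tau)$ with $\pi(G') = \alpha(H/G)$ whose orbits each meet every $\widetilde Y_i$.

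Next I would apply Lemma \ref{lem=transitive_on_Yi} with $G_1 = G$ acting by left translation on $Y = (G,S)$, $G_2 = G$ acting by left translation on $X$, and $\widetilde G_1 = G_\tau$ acting by left translation on $\widetilde Y_\tau = (G_\tau, S_\tau)$; the latter is transitive and the projection $\tau \colon G_\tau \to G$ intertwines the two actions with the covering $q_\tau$. The key check relies on normality of $G$ in $H$: given $i \in I$ and $g \in G_1$, the element $h := \alpha(i)\, g\, \alpha(i)^{-1}$ lies in $G$, left translation $L_h$ preserves every coset $Y_j = \alpha(j) G$, the induced map $f_j^{-1} \circ L_h \circ f_j$ is left translation on $(G,S)$ by $\alpha(j)^{-1} h \alpha(j) \in G$, and $f_i^{-1} \circ L_h \circ f_i = g$. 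The lemma then produces a subgroup $G'_2 \subset \Isom(X_\tau)$ with $\pi(G'_2) = G$ acting transitively on each $\widetilde Y_i$.

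Finally, the subgroup $\Gamma := \langle G', G'_2\rangle \subset \Isom(X_\tau)$ satisfies $\pi(\Gamma) \supset \alpha(H/G) \cdot G = H$ (the last equality because $H = G \rtimes H/G$), so $\Gamma \subset \pi^{-1}(H)$. Given $\widetilde x, \widetilde x' \in X_\tau$, one uses an element of $G'$ to move $\widetilde x$ into the copy $\widetilde Y_i$ containing $\widetilde x'$, then an element of $G'_2$ to finish the journey. Hence $\pi^{-1}(H)$ acts transitively on $X_\tau$, which is exactly the statement of the lemma. The only real bookkeeping is the normality-based identity $\alpha(j)^{-1} h \alpha(j) \in G$ in the verification of Lemma \ref{lem=transitive_on_Yi}; everything else reduces to a mechanical application of previously established lemmas.
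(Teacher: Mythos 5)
Your proposal is correct and follows essentially the same route as the paper: apply Lemma \ref{lem=transitive_on_Yi} (with $G_1=G$, $\widetilde G_1 = G_\tau$) to act transitively on each $\widetilde Y_i$ with $\pi$-image $G$, apply Lemma \ref{lem=transitive_on_I} to $\alpha(H/G)$ to move between the copies $\widetilde Y_i$, and combine, noting the generated group has $\pi$-image $H$. Your write-up merely spells out the normality check $\alpha(j)^{-1}h\alpha(j)\in G$ that the paper leaves implicit.
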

\begin{proof}
We first observe that there is a group $G'$ of isometries of $X_\tau$ acting transitively on each $\widetilde Y_{i}$ and such that $\pi(G')=G$. This follows from Lemma \ref{lem=transitive_on_Yi} and does not use that $H$ splits as a semi-direct product. 

Since $\alpha$ is a group homomorphism, we have that $\alpha(i) f_j(y) = f_{ij}(y)$ for all $y \in Y$ and $i,j \in H/G$. By Lemma \ref{lem=transitive_on_I} there is a group $G'_2$ of isometries of $X_\tau$ such that each $G'_2$-orbit meets each $\widetilde Y_{i}$, and such that $\pi(G'_2) = \alpha(H/G)$. 

The group generated by $G'$ and $G'_2$ therefore acts transitively on $X_\tau$, and its image by $\pi$ is the group generated by $G$ and $\alpha(H/G)$, which is $H$. This concludes the proof of the lemma.\end{proof}

\begin{lem}\label{lem=finite_to_one} Assume that the isometry group of $(G,S)$ is discrete. Let $G_{\tau},S_{\tau}$ be as above.

There are finitely many different isomorphism classes of extensions 
\[1 \to \Z/2\Z \to G_{\tau'} \xrightarrow{\tau'} G \to 1\]
and symmetric preimages $S_{\tau'} \subset G_{\tau'}$ of $S$ such that $X_{\tau}$ is isometric to $X_{\tau'}$.
\end{lem}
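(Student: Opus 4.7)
Suppose $\psi\colon X_\tau \to X_{\tau'}$ is a graph isomorphism. The hypothesis $\max_{t\in T}|tT\cap T| < |T|-|S|-1$ is exactly condition \eqref{eq:assumption_few_triangles}, so Lemma \ref{lem:Xtilde_remembers_Ytilde} provides an isometry $\phi \in \mathrm{Isom}(Y)$ and a graph isomorphism $\widetilde\phi\colon \widetilde Y_{\tau'} \to \widetilde Y_\tau$ with $\phi \circ q_{\tau'} = q_\tau \circ \widetilde\phi$. My plan is to show that the isomorphism class of $(\tau',S_{\tau'})$ is determined by the coset $\phi G$ in $\mathrm{Isom}(Y)/G$, which is finite.

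That $\mathrm{Isom}(Y)/G$ is finite relies on the assumption that $\mathrm{Isom}(Y)$ is discrete. Indeed, $G$ acts transitively on the vertex set of $Y$, so $[\mathrm{Isom}(Y):G]$ equals the cardinality of a vertex stabilizer in $\mathrm{Isom}(Y)$, which is compact-open hence finite in the discrete topology. Moreover, every element of $G$ lifts to $\mathrm{Aut}(\widetilde Y_\tau)$ as left multiplication by any preimage in $G_\tau$; consequently replacing $\phi$ by $\phi g$ for $g\in G$ can be absorbed by post-composing $\widetilde\phi$ with such a lift, without affecting the isomorphism class of $(\tau', S_{\tau'})$.

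For fixed $\phi$, consider the subgroup $\widetilde G' := \widetilde\phi\, G_{\tau'}\widetilde\phi^{-1} \subset \mathrm{Aut}(\widetilde Y_\tau)$, which acts simply transitively on $\widetilde Y_\tau$. A direct computation using the covering relation $q_{\tau'} \circ L_g = L_{\tau'(g)}\circ q_{\tau'}$ shows that the projection $\pi$ from Lemma \ref{lem=automorphism_group} sends $\widetilde\phi L_g \widetilde\phi^{-1}$ to $\phi L_{\tau'(g)}\phi^{-1}$, so $\pi(\widetilde G') = \phi G\phi^{-1}$ and the kernel of $\pi|_{\widetilde G'}$ is the $\widetilde\phi$-conjugate of the central $\Z/2\Z$ of $G_{\tau'}$, which coincides with the full deck transformation group $\ker\pi \simeq \Z/2\Z$. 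Since every element of $\pi^{-1}(\phi G\phi^{-1})$ differs from an element of $\widetilde G'$ by a deck transformation (itself lying in $\widetilde G'$), one obtains the identity $\widetilde G' = \pi^{-1}(\phi G\phi^{-1})$. In particular $\widetilde G'$ depends only on the conjugate subgroup $\phi G\phi^{-1}$, and since $(\phi g)G(\phi g)^{-1} = \phi G\phi^{-1}$ for every $g\in G$, it depends only on the coset $\phi G$.

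Finally, from $\widetilde G'$ one reads off the pair $(\tau',S_{\tau'})$ up to isomorphism: abstractly $G_{\tau'} \cong \widetilde G'$; the map $\tau'$ is $\alpha \circ \pi|_{\widetilde G'}$ for any group isomorphism $\alpha\colon \phi G\phi^{-1}\to G$, and different choices of $\alpha$ (corresponding to different representatives in the coset $\phi G$) differ by an inner automorphism of $G$, yielding the same extension class in $H^2(G,\Z/2\Z)$; and $S_{\tau'}$ is the set of $h\in \widetilde G'$ sending a fixed base vertex $v_0$ to a neighbor, well-defined up to an inner automorphism of $\widetilde G'$ by simple transitivity. Combined with the finite number of cosets $\phi G$, this gives finitely many isomorphism classes of pairs $(\tau',S_{\tau'})$. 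The main technical point, which I anticipate as the main obstacle, is the identification $\widetilde G' = \pi^{-1}(\phi G\phi^{-1})$, which uses in an essential way that the central $\Z/2\Z$ of the extension acts on $\widetilde Y_{\tau'}$ as exactly its deck transformation group.
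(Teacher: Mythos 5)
Your overall strategy is close to the paper's: both arguments first invoke Lemma \ref{lem:Xtilde_remembers_Ytilde} to reduce to counting isomorphism classes of $2$-coverings of $(G,S)$, and both extract finiteness from discreteness of $\Isom(G,S)$ (your index $[\Isom(G,S):G]$ equals the order of a vertex stabilizer, which is exactly the finite set the paper uses). The identification step is where you genuinely diverge. The paper normalizes $\widetilde\phi(1_{G_{\tau'}})=1_{G_\tau}$, forces $\phi$ into the finite stabilizer of the identity, and observes that for $\phi=\mathrm{id}$ the map $\widetilde\phi$ is a rooted, label-preserving isomorphism of marked Cayley graphs, hence a group isomorphism carrying $S_{\tau'}$ to $S_\tau$ and intertwining the projections \emph{exactly}. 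You instead encode the extension as the subgroup $\widetilde G'=\pi^{-1}(\phi G\phi^{-1})\subset\mathrm{Aut}(\widetilde Y_\tau)$; that identity is correct (the deck group of $q_\tau$ has order at most $2$ and contains $\widetilde\phi L_{z'}\widetilde\phi^{-1}$, hence lies in $\widetilde G'$), and it does show $\widetilde G'$ depends only on the coset $\phi G$. Two small slips: your $\pi$ is not the map of Lemma \ref{lem=automorphism_group} but its analogue for $q_\tau\colon \widetilde Y_\tau\to Y$, defined only on automorphisms that descend; and absorbing $\phi\mapsto\phi g$ is done by \emph{pre}-composing $\widetilde\phi$ with a lift of $g$ in $G_{\tau'}$, not post-composing with one in $G_\tau$ (the latter changes the left coset $G\phi$, whereas $\phi G\phi^{-1}$ is an invariant of the right translate $\phi G$ only).

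The genuine gap is in your last paragraph. From $\widetilde G'$ you recover $\tau'$ only up to precomposition with an inner automorphism $\iota_g$ of $G$, and $S_{\tau'}$ only up to conjugation in $G_{\tau'}$, and these two ambiguities interact: your construction produces $\Psi\colon G_{\tau'_1}\to G_{\tau'_2}$ with $\Psi(S_1)=S_2$ but only $\tau'_2\circ\Psi=\iota_g\circ\tau'_1$, and correcting the projection by an inner automorphism $\iota_{\widetilde g}$ conjugates $S_2$ again. So "same class in $H^2(G,\Z/2\Z)$" plus "$S_{\tau'}$ up to conjugacy" does not by itself bound the number of isomorphism classes of \emph{pairs}. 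This is fixable with one more use of discreteness: applying $\tau'_2\circ\Psi=\iota_g\circ\tau'_1$ to $S_1$ gives $gSg^{-1}=S$, so $\iota_g$ is an automorphism of $(G,S)$ fixing the identity; such $\iota_g$ form a subgroup of the finite vertex stabilizer, so each of your coarse classes splits into only finitely many honest isomorphism classes of pairs. Either add this patch, or adopt the paper's root-normalization from the start, which sidesteps the issue entirely.
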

\begin{proof} By Lemma \ref{lem:Xtilde_remembers_Ytilde} we only have to prove that there are finitely many different isomorphism classes of extensions 
\[1 \to \Z/2\Z \to G_{\tau'} \xrightarrow{\tau'} G \to 1\]
and symmetric preimages $S_{\tau'} \subset G_{\tau'}$ of $S$ such that the resulting $2$-covering $(G_{\tau'},S_{\tau'}) \to (G,S)$ is isomorphic to $(G_{\tau},S_{\tau}) \to (G,S)$.

By definition, $(G_{\tau'},S_{\tau'}) \to (G,S)$ is isomorphic to $(G_{\tau},S_{\tau}) \to (G,S)$ if and only if there are isometries $\widetilde \phi \colon (G_{\tau'},S_{\tau'}) \to (G_{\tau},S_{\tau})$ and $\phi \colon (G,S) \to (G,S)$ such that $\tau \circ \widetilde \phi = \phi \circ \tau'$. Moreover since $G_{\tau'}$ acts transitively on $(G_{\tau'},S_{\tau'})$ we can always assume that $\widetilde \phi(1_{G_{\tau'}}) = 1_{G_{\tau}}$. In particular $\phi$ belongs to the stabilizer of the identity in the isometry group of $(G,S)$, which by assumption is finite. The Lemma therefore reduces to the observation that if $\phi$ is the identity, then $\widetilde \phi$ is a group isomorphism. Actually, $\widetilde \phi$ is even an isomorphism of rooted oriented marked Cayley graphs: since $\tau'$ and $\tau$ are bijections in restriction to $S_{\tau'}$ and $S_{\tau}$, we can label the oriented edges in $G_{\tau'}$ and $G_{\tau}$ by $S$, and the map $\widetilde \phi$ respects this labelling because $\phi =\mathrm{id}$ does.
\end{proof}

\subsection{Proof of Theorem \ref{thm=2coverings}} It remains to collect all the previous lemmas. Let $G,H,T$ be as in Theorem \ref{thm=2coverings}. If $H$ splits as a semidirect product $G \rtimes H/G$, there is a section $\alpha \colon H/G \to H$ that is a group homomorphism. Otherwise pick any set-theoretical section.

For every extension
\[ 1 \to \Z/2\Z \to G_{\tau} \xrightarrow{\tau} G \to 1\]
and a symmetric set $S_\tau \subset G_\tau$ such that $\tau$ is a bijection $S_\tau \to S$, we define $X_{\tau}$ as the graph defined in \S~\ref{subsection=construction_cayleygraphs} for this $\alpha$. 

(\ref{item=cayley}) has been proved in Lemma \ref{lem:X_tau_Cayley}, and (\ref{item=4Lipschitz}) in Lemma \ref{lem=2Lipschitz}. We leave to the reader the easy task to check (\ref{item=Rmodeled}), where $R_1$ is the maximum of $|g|_S$ over all $g \in G$ with $|g|_T \leq R$. Finally, (\ref{item=automorphismgroup}) is Lemma \ref{lem=automorphism_group}, (\ref{item=finite_to_one}) is Lemma \ref{lem=finite_to_one} and (\ref{item=transitive}) follows from Lemma \ref{lem=pastedgraph_transitive}.

\subsection{Concluding step in the proof of Theorem \ref{thm:2covering_notNormal2} and \ref{thm:2covering_Normal2}} 
We start by a proposition, the proof of which will be given in \S \ref{subsec:proof_discrete_groups}.
\begin{prop}\label{prop:discrete_isometry_group_enhanced} Let $G \subsetneq H$ be finitely generated groups, and assume that $G$ contains an element of infinite order. Then there is a finite symmetric generating set $T$ of $H\setminus \{1_H\}$ such that
\begin{itemize} 
\item The Cayley graph $(H,T)$ has a  discrete isometry group.
\item $S=G \cap T$ generates $G$ and the Cayley graph $(G,S)$ has a  discrete isometry group.
\item $\max_{t \in T}|tT \cap T|<|T| - |S|-1$.
\end{itemize}
\end{prop}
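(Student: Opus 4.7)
The plan is to construct $T$ by augmenting in three stages, combining Theorem \ref{thm:discrete} with the padding device of Lemma \ref{lem:choice_of_T}. First, apply Theorem \ref{thm:discrete} to $G$, which has an element of infinite order by hypothesis, to obtain a finite symmetric generating set $S$ of $G$ such that $(G,S)$ has discrete isometry group.

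Second, produce an intermediate finite symmetric generating set $T_1$ of $H$ with $T_1 \cap G = S$ and $(H,T_1)$ having discrete isometry group. A naive extension of $S$ by coset representatives need not suffice, so the idea is to mimic the proof of Theorem \ref{thm:discrete} applied to $H$, with the side constraint that every new generator lies in $H\setminus G$. Fix $h_0\in H\setminus G$ and $g\in G$ of infinite order; since $h_0 g^n\in H\setminus G$ for all $n$, we may adjoin pairs $\{h_0 g^{n_i}, g^{-n_i}h_0^{-1}\}$ for a sufficiently sparse, sufficiently large sequence of integers $n_i$, thereby creating generators with pairwise distinct ``fingerprints'' (for instance pairwise distinct word-lengths with respect to the base generating set, or pairwise distinct triangle counts once combined with the rest of $T_1$). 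This forces the stabilizer of $e_H$ in $\Isom(H,T_1)$ to be trivial while leaving $T_1 \cap G = S$.

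Third, inflate $T_1$ to the final $T$ by adjoining many pairs $\{h,h^{-1}\}$ with $h\in H\setminus G$ of $T_1$-word length strictly greater than $3$. As computed in the proof of Lemma \ref{lem:choice_of_T}, such additions leave $|tT\cap T|$ unchanged for every $t\in T_1$ and give $|hT\cap T|=0$ for each new $h$; they also leave $T\cap G=S$ unaffected, so $(G,S)$ remains discrete. By adjoining enough such pairs we can make $|T\setminus S|-1$ exceed the bounded quantity $\max_{t\in T}|tT\cap T|$, yielding the triangle inequality. A short compactness argument shows that no new isometry of $(H,T)$ fixing $e_H$ appears: any such isometry must preserve the partition of edges at $e_H$ into short ones (coming from $T_1$) and long ones (the new pairs), hence restricts to an isometry of $(H,T_1)$-germs and so is trivial.

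The main obstacle is the second stage: adapting Theorem \ref{thm:discrete} so that all distinguishing generators lie outside $G$. The key point is that once $h_0\in H\setminus G$ is fixed, the sequence $(h_0 g^{n_i})_i$ escapes every bounded set in every word metric on $H$, so one can select the $n_i$ so that the word-length fingerprints are simultaneously pairwise distinct and distinct from the fingerprints already present in $S$. Verifying that the resulting stabilizer of $e_H$ in $\Isom(H,T_1)$ is trivial is then a bookkeeping calculation with word-lengths, essentially parallel to the argument in Theorem \ref{thm:discrete}, but carried out so as never to touch $G$.
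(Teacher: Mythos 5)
Your overall architecture (build $T$ in stages, pad at the end with long elements of $H\setminus G$ to get the triangle inequality) parallels the paper's, and your stage~3 is essentially the paper's final step. But stage~2 — the heart of the matter — has a genuine gap. Adjoining sparse pairs $\{h_0g^{n_i},g^{-n_i}h_0^{-1}\}$ with distinct ``fingerprints'' does not force $\Isom(H,T_1)$ to be discrete. First, word-lengths with respect to a base generating set are not invariants of the graph $(H,T_1)$, so they constrain nothing; the only usable invariant is the triangle count $N_3(\cdot,T_1)$, and making these pairwise distinct (up to inverses) only shows that every isometry of $(H,T_1)$ is an isometry of the \emph{marked} Cayley graph $(H,T_1)$. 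Second, a marked Cayley graph need not have discrete isometry group (e.g.\ the marked Cayley graph of $\F_2$ on its standard generators has an infinite vertex stabilizer); Lemma \ref{lem:markedgraph} achieves marked rigidity only because the generating set is taken to be a full ball of radius $N>|S_1|$, and such a ball meets $G$ in far more than your fixed $S$. Your sparse set $S\cup\{h_0g^{n_i},\dots\}$ gives no analogue of the repeated-letter argument in that lemma, and the remaining ``bookkeeping'' (ruling out the swap $g\delta\leftrightarrow g\delta^{-1}$ on each coset, propagating rigidity across infinitely many $G$-cosets each carrying only a \emph{finite}, not trivial, stabilizer) is precisely where an infinite product of finite ambiguities can produce a non-discrete isometry group.

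The second, related problem is that the constraint $T\cap G=S$ with $S$ fixed in advance is an over-constraint that the intended machinery cannot accommodate: to separate the triangle counts of the generators lying in $G$ one must run Lemma \ref{lem:increase_triangles} with $\gamma\in G$ of infinite order, and the sets $\Delta=\{\gamma^n,\gamma^{-n},s_0^{-1}\gamma^n,\gamma^{-n}s_0\}$ it adjoins lie in $G$. The paper therefore does \emph{not} fix $T\cap G$; it starts from a $T_0$ for which \emph{both} marked graphs $(H,T_0)$ and $(G,T_0\cap G)$ are rigid (union of two generating sets from Lemma \ref{lem:markedgraph}), lets $T\cap G$ grow, and runs the triangle-count construction twice — once in $H$ and once in $G$, using the observation that adding elements of $G\setminus T^2$ shifts $N_3(\cdot,T)$ and $N_3(\cdot,T\cap G)$ by the same amount — so that $(H,T)$ and $(G,T\cap G)$ become discrete simultaneously. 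To repair your argument you would need either to prove a version of Lemma \ref{lem:markedgraph} with all distinguishing generators outside $G$ (which your sketch does not do), or to abandon the requirement that $T\cap G$ equal the set produced in stage~1.
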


We also need the following Lemma.
\begin{lem}\label{lem:beacoup_dextensions} Let $G$ be a finitely generated group with finite generating set $S$. Let $R_1>0$. Assume that $H^2(G,\Z/2\Z)$ is infinite. Then for every extension
  \[ 1 \to \Z/2\Z \to G_0  \xrightarrow{\tau_0} G \to  1\]
where $S_0 \subset G_0$ is a symmetric subset such that $\tau_0$ is a bijection $S_0 \to S$, there is a  family $(\tau_i,S_i)_{i \in \R}$ where
\[ 1 \to \Z/2\Z \to G_i  \xrightarrow{\tau_i} G \to 1\]
are pairwise non isomorphic extensions, $S_i \subset G_i$ is a symmetric subset such that $\tau_i$ is a bijection $S_i \to S$, and where $(G_i,S_i)$ is $R_1$-locally $(G_0,S_0)$ for all $i$.
\end{lem}
\begin{proof}
It is easy to see that $H^2(G,\Z/2\Z)$ has the cardinality of the continuum. One way to argue is by using that an infinite compact Hausdorff topological group has always at least continuum many elements. In particular $H^2(G,\Z/2\Z)$, which is assumed to be infinite and which has a natural compact Hausdorff group topology as the quotient of the closed subgroup $Z^2(G,\Z/2\Z)$ of the compact Hausdorff group $(\Z/2\Z)^{G\times G}$ by its closed subgroup $B^2(G,\Z/2\Z)$, has (at least, but also clearly at most) the cardinality of the continuum. 

In particular by the same linear algebra consideration as in Lemma \ref{lem:fromH2_to_coverings} we see that there are continuum many elements $\varphi_i \in Z^2(G,\Z/2\Z)$ which are all distinct in $H^2(G,\Z/2\Z)$ and which vanish on $\{(g_1,g_2), |g_1|_S+|g_2|_S \leq R_1\}$. We conclude as in Lemma \ref{lem:fromH2_to_coverings}.
\end{proof}

Note that given an extension 
\[ 1 \to \Z/2\Z \to G_0  \xrightarrow{\tau_0} G \to  1\]
there is a symmetric subset $S_0 \subset G_0$ such that $\tau_0$ is a bijection $S_0 \to S$ if and only if every element of order $2$ in $S$ the preimages in $\widetilde H$ of every element of $G$ of order $2$ have order $2$. Therefore, when
\[ 1 \to \Z/2\Z \to G_0  \xrightarrow{\tau_0} G \to  1\]comes from an extension
\[ 1 \to \Z/2\Z \to \widetilde H  \to H \to  1,\]
the assumption that the preimages in $\widetilde H$ of every element of $G$ of order $2$ have order $2$ ensures that there is a symmetric subset  $S_0 \subset G_0$ such that $\tau_0$ is a bijection $S_0 \to S$.

It remains to combine this last Proposition and Lemma with  Theorem \ref{thm=2coverings} to conclude the proof of Theorem \ref{thm:2covering_Normal2}. Let $H,\tilde H,G$ as in Theorem \ref{thm:2covering_Normal2}. Let $T \subset H$ be the generating set provided by Proposition \ref{prop:discrete_isometry_group_enhanced}, $S=G\cap T$ and $C$ be the constant given by Theorem  \ref{thm=2coverings} for $G,H,T$. Let
\[ 1 \to \Z/2\Z \to G_0 \xrightarrow{\tau_0} G \to 1\]
be the restriction of
\[ 1 \to \Z/2\Z \to \tilde H \to H \to 1.\]
By the observation above, there is a symmetric subset $S_0\subset G_0$ mapping bijectively on $S$.

For $R>0$, let $R_1$ as in Theorem \ref{thm=2coverings}. Let $(\tau_i,S_i)_{i \in \R}$ be given by Lemma \ref{lem:beacoup_dextensions}. For every $i$ Theorem \ref{thm=2coverings} provides a graph $X_{\tau_i}$ such that
\begin{itemize}
\item for $i=0$, $X_{\tau_0}$ is a Cayley graph of $\widetilde H$.
\item for every $i,j$, $X_{\tau_i}$ and $X_{\tau_j}$ are $4$-Lipschitz equivalent.
\item for every $i$, $X_{\tau_i}$ is $R$-locally $X_{\tau_0}$.
\item for every $i$, there are at most $C$ different values of $j$ such that $X_{\tau_i}$ and $X_{\tau_j}$ are isometric.
\item the isometry group of $X_i$ is isomorphic to an extension of a subgroup of the (discrete) isometry group $(H,T)$ by $(\Z/2\Z)^{H/G}$.
\item if $G$ is normal and $H$ splits as a semidirect product, then $X_{\tau_i}$ is a transitive.
\end{itemize}

Theorem \ref{thm:2covering_notNormal2} follows, and Theorem  \ref{thm:2covering_Normal2} also once we justify that $H/G$ is infinite. But this holds because $H$ is finitely presented but $G$ is not.
\section{On Cayley graphs with discrete isometry group}\label{section:discreteisom}

This section is dedicated to the proofs of Theorems \ref{prop:discreteIsometriesCayleyGraph} and \ref{thm:discrete}. We start with a preliminary result dealing with marked Cayley graphs.

\subsection{The case of marked Cayley graphs}

For the proof of Theorem \ref{thm:discrete} and Theorem \ref{prop:discreteIsometriesCayleyGraph} we introduce the notion of \emph{marked Cayley graph}. If $\Gamma$ is a group with finite symmetric generating set $S$, the marked Cayley graph $(G,S)$ is the unoriented labelled graph in which each unoriented edge $\{\gamma,\gamma s\}$ is labelled by $\{s,s^{-1}\}$. With this notion, by an isometry of the marked Cayley graph $(\Gamma,S)$ we mean a bijection $f$ of $\Gamma$ such that $f(\gamma)^{-1}f(\gamma s) \in \{s,s^{-1}\}$ for all $s \in S$ and $\gamma \in \Gamma$.
\begin{lem}\label{lem:markedgraph} Let $\Gamma$ be a finitely generated group. There is a finite symmetric generating set $S$ such that the group of isometries of the marked Cayley graph $(\Gamma,S)$ is discrete.
\end{lem}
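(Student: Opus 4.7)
The plan is to reduce the discreteness of $\Isom(\Gamma,S)$ to the finiteness of the stabilizer $K$ of the identity. Since $\Gamma$ acts transitively on $(\Gamma,S)$ by marked isometries (left translations), $\Isom(\Gamma,S)$ is discrete iff $K$ is finite. Any $\phi \in K$ must preserve labels, and the only neighbors of $1$ carrying the label $\{s,s^{-1}\}$ are $s$ and $s^{-1}$ themselves, so $\phi(s)\in\{s,s^{-1}\}$ for every $s\in S$. Hence the restriction map $\phi\mapsto\phi|_{B(1,1)}$ takes values in a finite set of cardinality at most $2^{|S|}$, and it will suffice to choose $S$ so that this restriction is \emph{injective}: any isometry of $(\Gamma,S)$ fixing $\{1\}\cup S$ pointwise must be the identity.

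I would achieve this by starting from any finite symmetric generating set $S_0$ of $\Gamma$ and enriching it to
\[ S \;=\; S_0 \,\cup\, \bigl(S_0\cdot S_0 \,\setminus\, \{1_\Gamma\}\bigr), \]
which is still finite and symmetric. The motivation for this enrichment is visible in the failure case $\Gamma=F_2$ with $S_0=\{a^{\pm 1},b^{\pm 1}\}$: the marked Cayley graph is a labelled $4$-regular tree in which, at each vertex, a label-preserving isometry may independently swap the two edges of each label, producing an uncountable stabilizer. Adjoining the length-two products creates short cycles (triangles) of the form $1\to s\to st$ whenever $st\in S$, and these cycles tie the two $\{s,s^{-1}\}$-labelled edges at each vertex into the rest of the graph, forbidding independent swaps.

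I would then argue by induction on the $S$-distance $n$ that any $\phi\in K$ fixing $B_S(1,1)$ pointwise also fixes $B_S(1,n)$ pointwise. For the inductive step, let $\gamma$ lie at distance $n+1$ from $1$ and consider an $S_0$-geodesic $\gamma=r_1\cdots r_m$; grouping its last one or last two $S_0$-letters yields two decompositions $\gamma=\gamma_0\cdot s=\gamma_0'\cdot s'$ with $s\in S_0$, $s'\in S_0\cdot S_0$ and $\gamma_0,\gamma_0'\in B_S(1,n)$. Label preservation then gives
\[\phi(\gamma)\in\{\gamma,\gamma_0 s^{-1}\}\cap\{\gamma,\gamma_0'(s')^{-1}\}.\]
In the generic case either $\gamma_0 s^{-1}$ or $\gamma_0'(s')^{-1}$ already lies in $B_S(1,n)$ (so is fixed by $\phi$ by induction and cannot equal $\phi(\gamma)$ by bijectivity), or the two reflected alternatives differ from each other, and in either situation the intersection above collapses to $\{\gamma\}$.

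The main obstacle is the degenerate case in which the two alternatives $\gamma_0 s^{-1}$ and $\gamma_0'(s')^{-1}$ coincide \emph{and} both lie outside $B_S(1,n)$, so that the intersection fails to be a singleton and $\phi(\gamma)$ is not pinned down. I expect such configurations to be forced by specific short relations in $\Gamma$, and to be handled either by extracting a third decomposition of $\gamma$ using a longer grouping of $S_0$-letters — which in full generality may oblige us to take $S = S_0\cup S_0^2\cup\dots\cup S_0^k$ for some sufficiently large uniform $k$ — or by a small perturbation of $S_0$ that breaks the offending coincidence. Making this combinatorial case analysis work uniformly over all finitely generated groups is the technical heart of the proof.
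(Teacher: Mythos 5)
Your reduction (discreteness of the isometry group follows from triviality of the pointwise stabilizer of a ball around the identity, since label-preservation forces $\phi(s)\in\{s,s^{-1}\}$) is correct, and your overall skeleton --- enlarge the generating set by adjoining products, then induct on word length using several decompositions of $\gamma$ to pin down $\phi(\gamma)$ --- is exactly the skeleton of the paper's proof. But the argument as written has a genuine gap, and you have located it yourself: the degenerate case where the reflected alternatives coming from your two decompositions fail to determine $\phi(\gamma)$. This case is not exotic. For instance, if the last letter $s=r_m$ is an involution then $\gamma_0 s^{-1}=\gamma$ and the first decomposition gives no information at all, and in a group such as the infinite dihedral group $\langle a,b\mid b^2=1,\ bab^{-1}=a^{-1}\rangle$ the remaining constraint from the length-two decomposition still leaves two candidates for $\phi(\gamma)$, both of full word length. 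Saying that the uniform treatment of these configurations is ``the technical heart of the proof'' is accurate, and it is precisely the part that is missing; neither the proposed passage to $S_0\cup\dots\cup S_0^k$ nor the ``small perturbation of $S_0$'' is carried out, so the proof is incomplete.

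For comparison, here is how the paper closes this case, with two ideas you do not have. First, it does not stop at products of length $2$: it takes $S$ to be the entire punctured $|\cdot|_{S_1}$-ball of radius $N$ for $N>|S_1|$, so that by pigeonhole any $S_1$-geodesic word $s_1\cdots s_N$ of length $N$ contains a repeated letter $s_k=s_l$ with $k<l$. Second, instead of intersecting two constraints on $\phi(\gamma)$ coming from two decompositions of $\gamma$, it uses one decomposition of $\gamma$ (cut at position $k$, giving $f(\gamma)=\gamma_0 s_1\cdots s_{k-1}s_N^{-1}\cdots s_k^{-1}$) together with one decomposition of $f(\gamma)$ (cut at position $l$), combined with the relation $f(f(\gamma))=\gamma$, which follows from injectivity. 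The repeated letter then produces a cancellation $s_l^{-1}s_k=1$ in the resulting expression for $\gamma$, yielding $|\gamma|_{1}\leq n-1$ and contradicting $|\gamma|_{1}=n+1$. This length-contradiction mechanism is what makes the argument work uniformly over all finitely generated groups, with no case analysis on torsion or on short relations; it is the ingredient your proposal still needs.
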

\begin{proof} Let $S_1$ be a symmetric finite generating set of $\Gamma$. Denote by $|\cdot|_1$ the word-length associated to $S_1$. Let $N$ be an integer strictly larger than the cardinality of $S_1$. Denote $S_N=\{\gamma \in \Gamma, |\gamma|_1 \in \{1,2,\dots,N\}\}$. We claim that the isometry group of the marked Cayley graph $(G,S_N)$ is discrete. For this we prove that an isometry $f$ of the marked Cayley graph $(G,S_N)$ that is the identity on the $|\cdot|_1$-ball of radius $N-1$ is the identity on $(G,S_N)$. We prove by induction on $n \geq N-1$ that $f$ is the identity on the $|\cdot|_1$-ball of radius $n$. Assume that the induction hypothesis holds for some $n\geq N-1$. 
Suppose for contradiction that there exists $|\gamma|_1=n+1$ such that $f(\gamma) \neq \gamma$. Then for every decomposition $\gamma = \gamma' s$ with $s \in S_N$ and $|s|_1+|\gamma'|_1 = n+1$, the fact that $f$ is an isometry of marked Cayley graph $(\Gamma,S_N)$ says that $f(\gamma) \in f(\gamma')\{s,s^{-1}\}$. By the induction hypothesis $f(\gamma')=\gamma'$, and 
$f(\gamma) =\gamma' s^{-1}$ 
because $f(\gamma) \neq \gamma$. Also $f(f(\gamma)) = \gamma$ because $f(f(\gamma))\in \gamma' \{s,s^{-1}\}$ and $f(f(\gamma)) \neq f(\gamma)$. 

Let us write $\gamma = \gamma_0 s_1 \dots s_N$ for $s_1,\dots,s_N \in S_1$ and $|\gamma_0|_1 = n+1-N$. Since $N > |S_1|$, there exists $k<l$ with $s_k=s_l$. By the preceding discussion for the decomposition $\gamma = (\gamma_0 s_1 \dots s_{k-1})(s_k \dots s_N)$, we obtain $f(\gamma) = \gamma_0 s_1 \dots s_{k-1} s_{N}^{-1} \dots s_k^{-1}$. By the same reasoning for the decomposition \[ f(\gamma) =( \gamma_0 s_1 \dots s_{k-1} s_{N}^{-1}\dots s_{l}^{-1})  (s_{l-1}^{-1} \dots s_k^{-1}),\] and using that $f(f(\gamma))=\gamma \neq f(\gamma)$, we have 
\[\gamma =  \gamma_0 s_1 \dots s_{k-1} s_{N}^{-1}\dots s_{l}^{-1} s_k \dots s_{l-1}.\] Since $s_k = s_{l}$, we obtain that $|\gamma|_1 \leq |\gamma_0|_1 + N-2 =n-1$, a contradiction. The map $f$ is therefore the identity on the $|\cdot|_1$-ball of radius $n+1$. This concludes the proof of the induction, and of the Lemma.
\end{proof}

\subsection{Proof of Theorem \ref{prop:discreteIsometriesCayleyGraph}}

By Lemma \ref{lem:markedgraph} there is a finite symmetric generating set $S_0$ of $\Gamma$ such that the marked Cayley graph $(\Gamma,S_0)$ has a discrete isometry group. For redactional purposes we also make sure that $1_\Gamma \notin S_0$.

Take $S$ a larger finite symmetric generating set containing $S_0$ but not $1_\Gamma$, with the property that for all $s \in S_0$, there exists $s' \in S$ such that $ss' \in S$ and $s \notin \{s',s'^{-1},ss',(ss')^{-1}\}$. Such an $S$ exists unless $\Gamma$ is finite, in which case there is nothing to prove.

Since $S$ contains $S_0$, the marked Cayley graph $(\Gamma,S)$ \emph{a fortiori} has a discrete isometry group.

We can assume that $S$ has at least three elements. Let $R$ be the number of vertices in the largest clique (=complete subgraph) in $(\Gamma,S)$. Decompose $S$ as a disjoint union $S=S_1\cup S_2 \cup S_2^{-1}$, where $S_1$ is the elements of $S$ of order $2$. Enumerate $S_1 \cup S_2$ as $s_1\dots,s_n$, with $n \geq 2$. Let $p_1,\dots,p_n$ be distinct integers, all strictly greater than $R$, and $F =\prod_{i=1}^n \Z/{p_i \Z}$, denoted additively. If the $p_i$ are prime, $F$ is a cyclic group. Consider the following symmetric generating set $\widetilde S$ of $\Gamma\times F$~:
\[ \widetilde S = \bigcup_{i=1}^n (\{s_i,s_i^{-1}\} \times \Z/p_i \Z) \cup \left(\{1_\Gamma\} \times (F\setminus \{0_F\})\right).\]
Let $X=(\Gamma \times F,\widetilde S)$, and $q\colon X \to (\Gamma,S)$ be the projection. For each $\gamma \in \Gamma$, $\{\gamma \} \times F$ is a clique with $|F|$ vertices, and observe that these are the only cliques with $|F|$ vertices. Indeed, let $K$ be a clique in $X$. Its image $q(K)$ is a clique in $(\Gamma,S)$, and therefore has cardinality at most $R$. By the fact that the preimage by $q$ of an edge in $(\Gamma,S)$ has cardinality at most $\max_i p_i$, we see that if $q(K)$ contains at least two points, then $K$ has cardinality at most $R \max_i p_i$, which is strictly less than $|F|$ because $n \geq 2$ and $R<\min_i p_i$.

Let $f$ be an isometry of $X$. It sends cliques to cliques, and therefore there is an isometry $f_0$ of $(\Gamma,S)$ such that $f_0 \circ q = q\circ f$. Since the number of edges between $\{\gamma\} \times F$ and $\{\gamma s\} \times F$ determines $\{s,s^{-1}\}$, we see that $f_0$ is an isomorphism of marked Cayley graphs. This defines a group homomorphism from the isometry group of $X$ to the isometry group of the marked Cayley graph $(\Gamma,S)$, which is discrete. To prove that the isometry group of $X$ is discrete we are left to prove that the kernel of this homomorphism is finite. Let $f$ be such that $f_0$ is the identity. This means that we can write $f(\gamma,x) = (\gamma,f_\gamma(x))$ for a family $f_\gamma$ of bijections of $F$. If $s \in S$, there is a unique $i$ such that $s \in \{s_i,s_i^{-1}\}$; denote by $F_s$ the subgroup $\Z/p_i\Z$ of $F$, so that there is an edge between $(\gamma,x)$ and $(\gamma s, x')$ if and only if $x-x' \in F_s$. In particular, there is an edge between $(\gamma,x)$ and $(\gamma s, x)$, and therefore also between their images by $f$. This means that $f_{\gamma s}(x) - f_\gamma(x) \in F_s$. Now take $s \in S_0$, and $s' \in S$ such that $ss' \in S$ and $s \notin \{s',s'^{-1},ss',(ss')^{-1}\}$, as made possible by our choice of $S$. Writing $f_{\gamma s}(x) - f_\gamma(x) =f_{\gamma s}(x) - f_{\gamma ss'}(x) +  f_{\gamma ss'}(x) - f_{\gamma}(x)$, we see that $f_{\gamma s}(x) - f_\gamma(x) \in F_s \cap (F_{s'} + F_{ss'}) = \{0\}$. This proves that for all $s \in S_0$ and $\gamma \in \Gamma$, $f_\gamma = f_{\gamma s}$. Since $S_0$ generates $\Gamma$, we have that $f_\gamma$ does not depend on $\gamma$. This proves that the set of isometries $f$ of $X$ such that $f_0$ is trivial is finite. This implies that the isometry group of $X$ is discrete, and proves Theorem \ref{prop:discreteIsometriesCayleyGraph}.

\subsection{Proof of Theorem \ref{thm:discrete}}

Let $\Gamma$ be a finitely generated group with an element of infinite order. By Lemma \ref{lem:markedgraph} there is a finite symmetric generating set $S_0$ of $\Gamma$ such that the marked Cayley graph $(\Gamma,S_0)$ has a discrete isometry group. Our strategy is to find a larger generating set $S$ such that we can recognize the marked Cayley graph $(\Gamma,S_0)$ from the triangles in $(\Gamma,S)$. For this, if $S$ is a symmetric subset of $\Gamma\setminus \{1\}$ and $s \in S$, recall that in Definition \ref{def:triangles} we have denoted by $N_3(s,S)$ the number of triangles in the Cayley graph $(\Gamma,S)$ containing the two vertices $e$ and $s$. In formulas, 
\[ N_3(s,S) = \left| \{ t \in S, s^{-1}t \in S\} \right|.\]
We will also denote $N_3(s,S)=0$ if $s \notin S$. By the invariance of $(\Gamma,S)$ by translations, $N_3(s,S)$ is also equal, for every $\gamma \in \Gamma$, to the number of triangles in $(\Gamma,S)$ containing the two vertices $\gamma$ and $\gamma s$. In particular, for $\gamma=s^{-1}$ we see that $N_3(s,S) = N_3(s^{-1},S)$. The main technical result is the following.
\begin{lem}\label{lem:increase_triangles} Let $S \subset \Gamma \setminus \{1\}$ be a finite symmetric set and $s_0 \in S$. There exists a finite symmetric set $S' \subset \Gamma \setminus\{1\}$ containing $S$ such that 
\begin{enumerate}
\item $S' \setminus S$ does not intersect $\{s^2,s \in S\}$.
\item $N_3(s,S') \leq 6$ for all $s \in S'\setminus S$.
\item\label{item:N3s} $N_3(s,S) = N_3(s,S')$ for all $s \in S \setminus \{s_0,s_0^{-1},s_0^2,s_0^{-2}\}$.
\item\label{item:N3s0} The ordered pair $(N_3(s_0,S')-N_3(s_0,S),N_3(s_0^2,S')-N_3(s_0^2,S))$ belongs to
\[ \left\{ \begin{array}{cc} 
\{(2,0),(4,0)\} & \textrm{if $s_0$ has order $2$}.\\
\{(1,1),(2,2),(3,3)\} & \textrm{if $s_0$ has order $3$}.\\
\{(1,0),(2,0),(2,2)\} & \textrm{if $s_0$ has order $4$}.\\
\{(1,0),(2,0),(2,1)\} & \textrm{if $s_0$ has order $\geq 5$}.
\end{array}\right.\]
\end{enumerate}
\end{lem}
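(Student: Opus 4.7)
Fix an element $\gamma_0\in \Gamma$ of infinite order, which exists by hypothesis. The plan is to obtain $S'$ by enlarging $S$ with finitely many symmetric blocks of the form
\[ T_g(a) \;=\; \{a, a^{-1}, ga, (ga)^{-1}\},\]
where $g \in \{s_0, s_0^2\}$ and $a = \gamma_0^N$ for some large $N$. Such a block automatically creates the new triangle $\{e, g, ga\}$ (since $g^{-1}(ga)=a\in T_g(a)$) and, when $g$ has order $2$, the additional ``mirror'' triangle $\{e,g,a\}=\{e,g^{-1},a\}$.

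I would first carry out the single-block analysis. Fix $g\in S\cup\{s_0^2\}$ and take $N$ large enough so that the four elements of $T_g(a)$ are pairwise distinct, disjoint from $S\cup \{s^2 : s\in S\}$, and so that $\gamma_0^N$ is not a solution of any of the finitely many forbidden equations of the type ``$a^2$ equals a prescribed element'' or ``$s_0 a s_0^{-1} = a^{-1}$''. Such an $N$ exists as soon as $s_0$ does not invert $\gamma_0$ (which can be arranged up to replacing $\gamma_0$ by a suitable power or conjugate). A direct finite case analysis of the equation ``$s^{-1}u\in S\cup T_g(a)$'' for $s\in S$ and $u\in T_g(a)$ then yields
\[ N_3(s, S\cup T_g(a)) = N_3(s, S) \text{ for every } s\in S\setminus\{g,g^{-1}\},\]
$N_3(v, S\cup T_g(a)) \leq 2$ for every $v\in T_g(a)$, and
\[ N_3(g,\,S\cup T_g(a)) - N_3(g,S) \;=\; \begin{cases} 2 & \text{if } g^2=e,\\ 1 & \text{otherwise.}\end{cases}\]

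Next I would iterate: choose $g_1,\dots,g_m\in\{s_0,s_0^2\}$ and integers $N_1\ll\cdots\ll N_m$, each $N_i$ large relative to $\gamma_0^{N_1},\dots,\gamma_0^{N_{i-1}}$ and $S$, and set $S'=S\cup\bigcup_i T_{g_i}(\gamma_0^{N_i})$. The same style of case analysis extended to pairs of distinct blocks shows that no ``cross-triangle'' arises, so the effects on each $N_3(s)$ add up. Depending on the order of $s_0$, one block $T_{s_0}(\gamma_0^{N_i})$ contributes to $(N_3(s_0), N_3(s_0^2))$ the pair $(+2,0)$ if $s_0$ has order $2$, $(+1,+1)$ if $s_0$ has order $3$ (using $s_0^2=s_0^{-1}$ and the symmetry $N_3(s)=N_3(s^{-1})$), and $(+1,0)$ if $s_0$ has order $\geq 4$; likewise a block $T_{s_0^2}(\gamma_0^{N_i})$ contributes $(0,+2)$ when $s_0$ has order $4$ (so that $s_0^2$ has order $2$) and $(0,+1)$ when $s_0$ has order $\geq 5$. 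The four prescribed lists of values in (\ref{item:N3s0}) are then realized by a suitable combination of at most two $T_{s_0}$-blocks plus at most one $T_{s_0^2}$-block.

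The main obstacle is the case-by-case verification, both for a single block and for pairs of blocks, that no spurious triangle appears. Every potential ``bad'' configuration reduces either to a small-order condition on $s_0$ (incompatible with the order hypothesis of the case at hand) or to $\gamma_0^{N_i}$ satisfying a fixed nontrivial equation in $\Gamma$; the assumption that $\gamma_0$ has infinite order guarantees that any such equation excludes only finitely many values of $N_i$, so that the generic choice works and the four properties of the lemma hold simultaneously.
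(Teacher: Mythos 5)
Your construction is essentially the paper's: the block $T_{s_0}(\gamma_0^N)=\{\gamma_0^N,\gamma_0^{-N},s_0\gamma_0^N,(s_0\gamma_0^N)^{-1}\}$ is, up to replacing $s_0$ by $s_0^{-1}$, exactly the set $\Delta$ that the authors adjoin to $S$. The gap is in the genericity argument you invoke at the end. You claim that every spurious coincidence forces $\gamma_0^{N}$ to satisfy ``a fixed nontrivial equation'' having only finitely many solutions $N$. This is false: the relevant conditions are of the form $\gamma_0^{N}s_0\gamma_0^{-N}=s_1$ (for $s_1\in S$) and $\gamma_0^{N}s_0\gamma_0^{N}=s_1$, whose solution sets are unions of arithmetic progressions and can be infinite --- indeed all of $\Z$ --- for every choice of infinite-order $\gamma_0$ and every power or conjugate of it. Concretely, in $\Gamma=\Z\times\Z/5\Z$ with $s_0=(0,1)$, every infinite-order element commutes with $s_0$, so $\gamma_0^N s_0\gamma_0^{-N}=s_0$ for all $N$ and a single block increases $N_3(s_0,\cdot)$ by $2$, never by $1$; in $\Gamma=\Z\rtimes\Z/2\Z$ with $s_0$ the involution, one has $\gamma_0^{N}s_0\gamma_0^{N}=s_0$ for every infinite-order $\gamma_0$ and every $N$, so the degeneration of the block to three elements cannot be avoided by any choice of $\gamma_0$.

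Worse, when the conjugation orbit $\{\gamma_0^ns_0\gamma_0^{-n}\}_{n}$ is finite but nontrivial, a generic large $N$ gives $\gamma_0^Ns_0\gamma_0^{-N}=s_1$ for some $s_1\in S$ possibly outside $\{s_0^{\pm1},s_0^{\pm2}\}$, which creates a new triangle on the edge $\{e,s_1\}$ and breaks item (3). The paper's proof confronts exactly this: it distinguishes whether the two orbits $\{\gamma^ns_0\gamma^{-n}\}$ and $\{\gamma^ns_0\gamma^{n}\}$ are finite or infinite and, in the finite case, takes $n$ a multiple of the period so that the unavoidable coincidences land back on $s_0$ or $s_0^{\pm2}$; this is precisely why item (4) is a list of possible increment pairs rather than a single value. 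Finally, your plan of combining $T_{s_0}$- and $T_{s_0^2}$-blocks to ``realize'' the listed pairs misreads the statement --- the lemma asserts only that the increment pair of the constructed $S'$ lies in the list, and an uncontrolled combination (e.g.\ $(1,0)+(0,1)=(1,1)$ for $s_0$ of order at least $5$) would land outside it. As written, the proposal does not establish items (3) and (4).
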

\begin{proof} Let $\gamma \in \Gamma$ be an element of infinite order. We define a finite symmetric set by $S'=S\cup \Delta$ where $\Delta = \{\gamma^n, \gamma^{-n},s_0^{-1} \gamma^n, \gamma^{-n} s_0\}$ for an integer $n$ that we will specify later. Since all the $\gamma^n$ are distinct, for all $n$ large enough (say $|n| \geq n_0$) all the elements in $\Delta$ have word-length with respect to $S$ at least $3$, and the three elements $\gamma^n, \gamma^{-n},s_0^{-1} \gamma^n$ are distinct. This means that $\Delta$ has $4$ elements unless $s_0^{-1} \gamma^n= \gamma^{-n} s_0$, in which case $\Delta$ has $3$ elements.

Assume that $n \geq n_0$. Then the first condition clearly holds because an element of $\{s^2,s\in S\}$ has word length at most $2$, which is strictly smaller than $3$. Also, by the triangle inequality for the word-length with respect to $S$, a triangle in $(G,S')$ either is a triangle in $(G,S)$, or has at least two edges coming from $S' \setminus S=\Delta$. This shows the second item. Indeed, if $s \in S' \setminus S= \Delta$ and $t \in S'$ satisfies $s^{-1}t \in S'$, then either $t \in \Delta \setminus \{s\}$ or $s^{-1}t \in \Delta \setminus \{s^{-1}\}$, which leave at most $3+3=6$ possible triangles containing $e$ and $t$. This also shows that for $s \in S$,
\[ N_3(s,S') - N_3(s,S) = \left| \{ t \in \Delta, s^{-1}t \in \Delta\}\right| = \left| \Delta \cap s\Delta \right|.\]

It remains to find $|n| \geq n_0$ such that  (\ref{item:N3s}) and (\ref{item:N3s0}) hold.

Let us first consider the simpler case when there exists infinitely many $n$'s such that $s_0^{-1} \gamma^n= \gamma^{-n} s_0$. Then for such an $n$, $\Delta = \{\gamma^n, \gamma^{-n},s_0^{-1} \gamma^n\}$ and if $|n|\geq n_0$ the previous formula means that for $s \in S$, $N_3(s,S') - N_3(s,S)$ is the number of elements equal to $s$ in the list
\[ s_0,s_0^{-1}, \gamma^{2n}, \gamma^{-2n}, s_0^{-1}\gamma^{2n},\gamma^{-2n} s_0.\]
For $|n|$ large enough the terms $\gamma^{2n}, \gamma^{-2n}, s_0^{-1}\gamma^{2n},\gamma^{-2n} s_0$ do not belong to $S$, which proves that $N_3(s,S') - N_3(s,S) = 0$ if $s \notin\{s_0,s_0^{-1}\}$, and that $N_3(s_0,S') - N_3(s_0,S) \in \{1,2\}$ depending on whether $s_0$ has order $2$ or not. This proves (\ref{item:N3s}) and (\ref{item:N3s0}).

We now move to the case when $s_0^{-1} \gamma^n\neq \gamma^{-n} s_0$, \emph{i.e.} $\Delta$ has $4$ elements for all $|n|$ large enough. This means that $N_3(s,S') - N_3(s,S)$ is the number of elements equal to $s$ in the list
\[ s_0,s_0^{-1},\gamma^{-n} s_0 \gamma^{n},\gamma^{-n} s_0^{-1} \gamma^{n},\gamma^{-n} s_0 \gamma^{-n},\gamma^n s_0^{-1} \gamma^n,\gamma^{-n} s_0 \gamma^{-n}s_0,s_0^{-1}\gamma^n s_0^{-1} \gamma^n,\gamma^{2n}, \gamma^{-2n},s_0^{-1}\gamma^{2n},\gamma^{-2n}s_0.\]
If $n$ is large enough we can forget the last four elements, which do not belong to $S$. 

We have two actions of $\Z$ on $G$ given by $\alpha_n g=\gamma^{-n} g \gamma^{n}$ and $\beta_n g=\gamma^n g \gamma^{n}$. With this notation, the previous list becomes
\[ s_0,s_0^{-1},\alpha_n s_0,(\alpha_n s_0)^{-1},\beta_{-n} s_0,(\beta_{-n} s_0)^{-1}, (\beta_{-n} s_0) s_0,s_0^{-1}(\beta_{-n} s_0)^{-1}.\]
Denote by $T_1 \in \N \cup \{\infty\}$ and $T_2 \in \N \cup \{\infty\}$ the cardinality of the $\alpha$-orbit and the $\beta$-orbit of $s_0$ respectively, so that $\alpha_n s_0 = s_0$ if and only if $n$ is a multiple of $T_1$, and $\beta_n s_0 = s_0$ if and only if $n$ is a multiple of $T_2$ (with the convention that the only multiple of $\infty$ is $0$). If $n$ is a multiple of $T_1$ and $T_2$, then $\alpha_n s_0 = \beta_n s_0$, and hence $\gamma^{2n}=1$, which holds only if $n=0$. This implies that $T_1$ and $T_2$ cannot both be finite. Also, note that $T_2 <\infty$ prevents $s_0$ from having order $2$, because we assumed that $s_0^{-1} \gamma^n\neq \gamma^{-n} s_0$ for $n$ large enough.

Case 1: $T_1=T_2 = \infty$. Then all the terms in the previous list except $s_0,s_0^{-1}$ escape from $S$ as $n \to \infty$. This implies that for $n$ large enough $N_3(s,S') - N_3(s,S)=0$ if $s\notin \{s_0,s_0^{-1}\}$, and that $N_3(s_0,S')-N_3(s_0,S) \in \{1,2\}$ depending on whether $s_0$ is of order $2$. This proves (\ref{item:N3s}), and that $(N_3(s_0,S')-N_3(s_0,S),N_3(s_0^2,S')-N_3(s_0^2,S))$ is equal to $(2,0)$ if $s_0$ has order $2$, $(1,1)$ if $s_0$ has order $3$, and $(1,0)$ otherwise. This proves also (\ref{item:N3s0}).

Case 2: $T_1<\infty$, $T_2=\infty$. Take $n$ a large multiple of $T_1$. Then the terms containing $\beta_{-n} s_0$ in the previous list are not in $S$, and the elements of the list that can belong to $S$ are
\[ s_0,s_0^{-1},\alpha_n s_0=s_0,(\alpha_n s_0)^{-1} =s_0^{-1}.\]
This implies that $N_3(s,S') - N_3(s,S)=0$ if $s\notin \{s_0,s_0^{-1}\}$, and that $N_3(s_0,S')-N_3(s_0,S) \in \{2,4\}$ depending on whether $s_0$ is of order $2$. This proves (\ref{item:N3s}) and (\ref{item:N3s0}) as in the first case.

Case 3: $T_1=\infty$, $T_2<\infty$. Take $n$ a large multiple of $T_2$. Similarly the elements in the previous list that can belong to $S$ are
\[ s_0,s_0^{-1},s_0,s_0^{-1}, s_0^2,s_0^{-2}.\]
This proves (\ref{item:N3s}). If $s_0^2 \notin S$, by convention $N_3(s_0^2,S)=N_3(s_0^2, S')=0$, and we get as above that $N_3(s_0,S')-N_3(s_0,S) =2$ (recall that $s_0 \neq s_0^{-1}$ because $T_2<\infty$), which proves (\ref{item:N3s0}). If $s_0^2 \in S$, we get that  $(N_3(s_0,S')-N_3(s_0,S),N_3(s_0^2,S')-N_3(s_0^2,S))$ is equal to $(3,3)$ if $s_0$ has order $3$, and $(2,2)$ if $s_0$ has order $4$ and $(2,1)$ otherwise. This proves also (\ref{item:N3s0}). 
\end{proof}

We now prove
\begin{lem}\label{prop:recover_marked_graph_from_larger_graph} There exists a finite symmetric generating set $S \subset \Gamma \setminus \{1\}$ containing $S_0$ such that for every $s \in S_0$ and $s' \in S$, $N_3(s,S)=N_3(s',S)$ if and only if $s' \in \{s,s^{-1}\}$.
\end{lem}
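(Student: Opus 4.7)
The plan is to build $S$ iteratively starting from $S_0$, at each step applying Lemma~\ref{lem:increase_triangles} with a target in $S_0$, so that at the end $N_3(\cdot,S)$ takes pairwise distinct values on the distinct pairs $\{s,s^{-1}\}$ of $S_0$, and takes values strictly greater than $6$ on all of $S_0$. Combined with the invariant (to be maintained throughout the construction) that $N_3(s',S) \le 6$ for all $s' \in S \setminus S_0$, this will imply the lemma: if $s \in S_0$ and $s' \in S$ satisfy $N_3(s,S)=N_3(s',S)$, the invariant forces $s' \in S_0$, and the distinctness then forces $s' \in \{s,s^{-1}\}$.

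The first step is to maintain the invariant $N_3(\cdot, S) \le 6$ on $S \setminus S_0$. When we pass from $S$ to $S'$ via Lemma~\ref{lem:increase_triangles} with target $s_0 \in S_0$, bullet~(2) gives $N_3(\cdot,S') \le 6$ for the newly adjoined elements, and bullet~(\ref{item:N3s}) shows that any previously added element $s' \in S \setminus S_0$ keeps its $N_3$-value unless $s' \in \{s_0, s_0^{-1}, s_0^{2}, s_0^{-2}\}$. Now, inspection of the proof of Lemma~\ref{lem:increase_triangles} shows that every element of $\Delta = S' \setminus S$ has the form $\gamma^{\pm n}$ or $\gamma^{\pm n} s_0^{\pm 1}$ for some infinite-order $\gamma \in \Gamma$, and taking $n$ sufficiently large at each iteration forces all new elements to have arbitrarily large word-length with respect to $S_0$; in particular they stay outside the fixed finite ``dangerous'' set $\mathcal{F} := \{t,t^{-1},t^2,t^{-2} : t \in S_0\}$. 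Hence no previously added element will ever lie in the exceptional set of a subsequent iteration, and the invariant persists.

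The second step is to drive $N_3(\cdot, S)\big|_{S_0}$ to the required values. Enumerate representatives $t_1,\dots,t_m$ of the pairs $\{s,s^{-1}\}\subseteq S_0$. A single application of Lemma~\ref{lem:increase_triangles} with target $s_0=t_i$ increases $N_3(t_i,\cdot)$ by a strictly positive integer $a_i \in \{1,2,3,4\}$ (by bullet~(\ref{item:N3s0})) and may shift $N_3(t_i^2,\cdot)$ by a nonnegative integer $b_i \in \{0,1,2,3\}$; no other $N_3(t_j,\cdot)$ is affected, by the invariant from step one. Thus if we perform $k_i \ge 0$ applications with target $t_i$ for each $i$, the final value is
\[ N_3(t_j, S) \;=\; N_3(t_j, S_0) \;+\; a_j k_j \;+\!\! \sum_{i\,:\, t_i^{2}\,\in\, \{t_j,\, t_j^{-1}\}} b_i\, k_i. \]

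The only real obstacle is the coupling in this formula, which arises when some $t_i \in S_0$ is torsion with $t_i^2 \in S_0$. I would handle it as follows: the partial map $\sigma\colon i \mapsto j$ defined by $t_i^2 \in \{t_j, t_j^{-1}\}$ has out-degree at most one, so its functional graph decomposes into trees feeding into disjoint finite cycles (whose lengths are bounded by the order of the torsion elements involved, since iterating squaring on $t_i$ stays inside the finite cyclic group $\langle t_i \rangle$). Process the $t_i$ in reverse topological order of $\sigma$: for pairs with $t_i^2 \notin S_0$ the system is already diagonal with positive diagonal $a_j$, so any sufficiently large target is achievable; for the tree parts one resolves the coordinates greedily from the leaves; and for the finitely many cycles one solves a small explicit linear system, invertible over $\mathbb{Q}$ since its diagonal entries are the strictly positive $a_i$. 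This lets us realize any sufficiently large and sufficiently spread-out integer target vector as $(N_3(t_j, S))_j$, and in particular to make these values distinct and all exceeding $6$, which finishes the proof.
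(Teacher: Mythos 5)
Your overall architecture is the same as the paper's: iterate Lemma \ref{lem:increase_triangles} with targets in $S_0$, use its first three items to keep $N_3(\cdot,S)\le 6$ on $S\setminus S_0$ (your word-length argument for why previously added elements avoid every exceptional set $\{s_0^{\pm1},s_0^{\pm2}\}$, $s_0\in S_0$, is correct and is exactly what the paper asserts in one line), and then separate the values on $S_0$ by processing the classes $\{s,s^{-1}\}$ in an order adapted to the squaring map. The paper encodes that order as a maximal chain $T_0\subset\cdots\subset T_K$ of square-closed subsets of $S_0$, which is in substance your reverse topological order on the functional graph of $\sigma$.

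The gap is in your treatment of the cycles of $\sigma$. First, your exact linear formula presumes that each application with target $t_i$ produces the same increment pair $(a_i,b_i)$; the last item of Lemma \ref{lem:increase_triangles} only confines the pair to a finite set, and different applications could a priori land on different elements of that set, so one must argue with inequalities (as the paper does) rather than with a fixed-coefficient system. Second, ``invertible over $\Q$ since its diagonal entries are the strictly positive $a_i$'' is not a valid inference: a positive diagonal does not imply invertibility. What actually saves the cycle case is that cycles of $\sigma$ consist only of classes of odd order at least $5$ (squaring strictly decreases the $2$-part of the order, so even-order elements only feed \emph{into} cycles; order $3$ gives the degenerate self-loop, where the two coordinates of the increment pair are literally the same quantity $N_3(t,\cdot)=N_3(t^{-1},\cdot)$, so your formula double-counts there), and for odd order $\ge 5$ every admissible increment pair $(a,b)\in\{(1,0),(2,0),(2,1)\}$ satisfies $a\ge b+1$. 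This strict domination is the paper's key point: it chooses multiplicities $n_0>n_1>\cdots$ decreasing rapidly along the squaring orbit so that $N_3(t^{\pm 2^k},\cdot)$ comes out strictly larger than $N_3(t^{\pm2^{k+1}},\cdot)$ and than everything treated at earlier stages. Third, even granting invertibility over $\Q$, you would still need \emph{non-negative integer} multiplicities $k_i$ realizing pairwise distinct values $\ge 7$; ``sufficiently large and spread-out target vector'' does not establish this, whereas the decreasing-multiplicities argument does, and it avoids solving any system exactly since one only needs distinctness, not prescribed values.
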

Since an isometry of $(\Gamma,S)$ preserves the number of triangles adjacent to an edge, this proposition implies that the isometry group of $(\Gamma,S)$ is a subgroup of that of the marked Cayley graph $(\Gamma,S_0)$, which is discrete. This implies Theorem \ref{thm:discrete}. 
\begin{proof}[Proof of Lemma \ref{prop:recover_marked_graph_from_larger_graph}]
For a finite sequence $\underline u = u_1,\dots,u_N$ of elements in $S_0$, we define a finite symmetric generating sets $S(\underline u) \subset \Gamma \setminus \{1\}$ inductively as follows~: if $N=0$ (there are zero terms in the sequence), $S(\underline u) = S_0$, and if $N>0$ $S(\underline u)$ is the set $S'$ given by Lemma \ref{lem:increase_triangles} for $S=S(u_1,\dots,u_{N-1})$ and $s_0=u_N$.

By the first three items in Lemma \ref{lem:increase_triangles}, we have that $N_3(s,S(\underline u)) \leq 6$ for all $s \in S(\underline u) \setminus S_0$.

We claim that the conclusion of the Lemma holds for a good choice of $\underline u$. For this we consider $T_0=\emptyset \subset T_1 \subset \dots T_K=S_0$ a maximal strictly increasing sequence of symmetric subsets $T_i$ of $S_0$ with the property that for all $s \in S_0$, $s^2 \in T_i \implies s \in T_i$. We prove by induction on $i$ that there is a sequence $\underline u$ in $T_i$ such that for all $s,s' \in T_i$, $N_3(s,S(\underline u)) \geq 7$ and $N_3(s,S(\underline u))=N_3(s',S(\underline u))$ if and only if $s' \in \{s,s^{-1}\}$. For $i=0$ there is nothing to prove. Assume that there exists $\underline u$ in $T_i$ such that the conclusion holds for $T_i$. We will find a sequence $\underline{u'}$ in $T_{i+1}\setminus T_i$ such that for the concatenated sequence $\underline u,\underline{u'}$, the conclusion holds for $T_{i+1}$. Consider $t \in T_{i+1} \setminus T_i$. We consider two cases.

If $t^2\notin T_{i+1}$ or $t^2 = t^{-1}$, then by maximality, $T_{i+1} =T_i \cup \{t,t^{-1}\}$  (otherwise $T_{i+1}\setminus \{t,t^{-1}\}$ could be added between $T_i$ and $T_{i+1}$). We then define $\underline u'=t,\dots,t$ repeated $\max(n,7)$ times for $n > \max_{s \in T_i} N_3(s,S(\underline u))$, and we see that $N_3(s,S(\underline u,\underline u')) = N_3(s,S(\underline u))$ if $s \in T_i$ because $s \notin \{t,t^2,t^{-1},t^{-2}\}$, and $N_3(t,S(\underline u,\underline u')) \geq \max(n,7) > \max_{s \in T_i} N_3(s,S(\underline u,\underline u'))$. This proves the assertion for $T_{i+1}$.

If $t^2 \in T_{i+1}$ and $t^2 \neq t^{-1}$, observe that for all $j$, $t^{2^j} \in T_{i+1} \setminus T_i$ (otherwise if $j\geq 2$ is the smallest integer such that $t^{2^j} \notin T_{i+1} \setminus T_i$, then $t^{2^j} \notin T_i$ because it is the square of $t^{2^{j-1}} \notin T_i$, and hence $T_{i+1} \setminus \{t^{2^{j-1}},t^{-2^{j-1}}\}$ could be added between $T_i$ and $T_{i+1}$, contradicting the maximality). Since $T_{i+1}$ is finite, there is a smaller $j$ such that $t^{2^j} \in \cup_{k=0}^{j-1} \{t^{2^k},t^{-2^k}\}$, and by maximality necessarily $t^{2^j} \in \{t,t^{-1}\}$ and $T_{i+1}\setminus T_i = \{t^{2^k},k=0\dots j-1\} \cup  \{t^{-2^k},k=0\dots j-1\}$. In particular, $2^{2j}-1$ is a multiple of the order of $t$, which is therefore odd and hence at least $5$ (we assumed that $t^3 \neq e$). Take a sequence $n_0>n_1>\dots>n_j$, and take for $\underline u'$ the sequence containing $n_k$ times $t^{2^k}$ for all $k=0,\dots,j$. Then by (\ref{item:N3s}) $N_3(s,S(\underline u,\underline u')) = N_3(s,S(\underline u))$ if $s \in T_i$. Also, since by (\ref{item:N3s0}) for each occurence of $t^{2^j}$, $N_3(t^{2^j},\cdot)$ is increased by at least $1$ (and at most $2$), we see that $N_3(t^{\pm 2^j},S(\underline u,\underline u')) \geq n_j$, which can be made strictly larger than $\max_{s \in T_i} N_3(s,S(\underline u))$ and $7$ if $n_j$ is large enough. Finally, consider $k<j$. For each of the $n_k$ occurences of $t^{2^k}$ in $\underline u'$, only $N_3(t^{\pm 2^k},\cdot)$ and $N_3(t^{\pm 2^{k+1}},\cdot)$ can increase (by one or two), but necessarily $N_3(t^{\pm 2^k},\cdot)$ increases by at least one unit more than $N_3(t^{\pm 2^{k+1}},\cdot)$. This implies that
\begin{multline*} N_3(t^{\pm 2^k},S(\underline u,\underline u')) - N_3(t^{\pm 2^k},S(\underline u)) \\ \geq n_k +  N_3(t^{\pm 2^{k+1}},S(\underline u,\underline u')) - N_3(t^{\pm 2^{k+1}},S(\underline u)) - 2n_{k+1}.\end{multline*}
This implies that if $n_k$ is large enough compared to $n_{k+1}$, we have 
\[ N_3(t^{\pm 2^k},S(\underline u,\underline u')) > N_3(t^{\pm 2^{k+1}},S(\underline u,\underline u')).\]
In particular there is a choice of $n_0,\dots,n_j$ such that the induction hypothesis holds at step $i+1$.

Finally the induction hypothesis holds for $T_K=S_0$, which concludes the proof of the Lemma.
\end{proof}
\begin{rem} The proof of Theorem \ref{thm:discrete} in fact shows the following: there is a function $f \colon \N \to \N$ such that for any $N \in \N$, if $\Gamma$ is a group with $N$ generators and an element of order at least $f(N)$, then $\Gamma$ has a Cayley graph with discrete isometry group.
\end{rem}

\subsection{Proof of Proposition \ref{prop:discrete_isometry_group_enhanced}}\label{subsec:proof_discrete_groups}
We can adapt the proof of Theorem \ref{thm:discrete} to prove a slightly stronger statement: Proposition \ref{prop:discrete_isometry_group_enhanced} that was used in the proof of Theorem \ref{thm:2covering_Normal2}.

Let $G \subsetneq H$ be as in Proposition \ref{prop:discrete_isometry_group_enhanced}. It follows from Lemma \ref{lem:markedgraph} that $H$ has a finite symmetric generating set $T_0$ such that $S_0:=G\cap T_0$ generates $G$, and such that the isometry groups of the marked Cayley graphs $(G,S_0)$ and $(H,T_0)$ are discrete (just take for $T_0$ the union of a finite generating set of $G$ and of $H$ given by Lemma \ref{lem:markedgraph}). By applying the proof of Lemma \ref{prop:recover_marked_graph_from_larger_graph} first in $H$, we see that there is a finite symmetric generating set $T \subset H$ containing $T_0$ such that (1) $N_3(t,T) \leq 6$ for $t \in T \setminus T_0$, (2) if $t,t' \in T_0$, $N_3(t,T)=N_3(t',T)$ if and only if $t' \in \{t,t^{-1}\}$ and (3) $N_3(t,T) >6$ if $t \in T_0$. Now observe that adding to $T$ elements of $G\setminus T^2$ does not change the function $N_3(\cdot,T)$ on $H \setminus G$, whereas on $H$ it increases the functions $N_3(\cdot,T)$ and $N_3(\cdot,T \cap G)$ by the same amount. By applying the proof of Lemma \ref{prop:recover_marked_graph_from_larger_graph} to $G$, we therefore see that we can enlarge $T$ by adding elements of $G$ such that (1) (2) (3) still hold but also (2') if $s,s' \in S_0$, then $N_3(s,T \cap G)=N_3(s',T \cap G)$ if and only if $s' \in \{s,s^{-1}\}$ and (3') $N_3(s,T\cap G)>6$ for all $s \in S_0$. Finally, we observe that we can moreover assume that (4) $\max_{t \in T} |tT \cap T| <  |T \setminus G|-1$. This is because replacing $T$ by $T \cup \{h,h^{-1}\}$ for $h \in H \setminus G$ of word-length $|h|_T>3$ does not change the value of $\max_{t \in T} |tT \cap T|$ but increases the cardinality of $|T \setminus G|$; we can therefore repeat this as many times as necessary to ensure (4).

It follows from (1), (2) and (3) (respectively (1), (2') and (3')) that $(H,T)$ (respectively $(G,T \cap G)$) has a discrete isometry group. (4) is exactly the last point to be proved. This concludes the proof of Proposition \ref{prop:discrete_isometry_group_enhanced}.

\subsection{Proof of Corollary \ref{cor:residualfinite}} By Theorem \ref{thm:discrete}, $\Gamma$ has a Cayley graph $X$ with discrete isometry group. By Theorem \ref{thm:mainIntro}, $X$ is USLG-rigid. We conclude by Lemma \ref{lem:WRF+USLG=>RF}.

\section{Proof of Theorem \ref{thm:graph}}\label{section:counterexamplemetric}

\begin{lem}\label{lem:Cn} For each positive integer $n$, there exist geodesic contractible compact metric spaces $C_n^0,C_n^1,C_n^2$ with isometries $i_n^k$, $k=0,1,2$ from $[0,2^n]$ onto a subset $I_n^k  \subset C_n^k$ such that
\begin{itemize}
\item The isometry group of $C_n^k$ is trivial if $k=0,1$.
\item The isometry group of $C_n^2$ is isomorphic to $\Z/2\Z$ and acts as the identity on $I_n^2$.
\item For $k \neq l$, any two connected components of $C_n^k \setminus I_n^k $ and $C_n^l \setminus I_n^l$ are not isometric.
\item Every point in $C_n^k$ is at distance at most $2^{-n}$ from $I_n^k$, and every connected component of $C_n^k \setminus I_n^k$ contains a point at distance $2^{-n}$ from $I_n^k$.
\item For $k \neq l$ and every $x \in C_n^k$, there is an isometry from $B(x,2^{n-2}) \cup I_n^k$ to $C_n^l$ that maps $i_n^k(t)$ to $i_n^l(t)$ for all $t$.
\end{itemize}
\end{lem}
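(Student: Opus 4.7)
The plan is to construct each $C_n^k$ as a compact contractible metric graph: the segment $I_n^k\cong[0,2^n]$ together with finitely many small contractible trees (``decorations'') of diameter at most $2^{-n}$, each attached to $I_n^k$ at a single ``root'' vertex. To realise the prescribed isometry groups, the decorations for $k=0,1$ will be chosen asymmetric (trivial automorphism groups), while for $k=2$ a single decoration carrying a non-trivial $\Z/2\Z$-automorphism fixing its root will be included.

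Concretely, I fix a common ``background'' of asymmetric decorations placed at a $k$-independent set of sites $\mathcal{P}\subset[0,2^n]$, positioned so as to break every non-trivial reflective symmetry of the interval in all three spaces. This forces any isometry of $C_n^k$ to fix $I_n^k$ pointwise, reducing the isometry group of $C_n^k$ to the internal automorphisms of the attached decorations. At one further distinguished site $p_\star$ I attach a $k$-dependent decoration: pairwise non-isometric asymmetric trees $T_0,T_1$ for $k=0,1$, and a $\Z/2\Z$-symmetric ``Y''-tree $T_2$ (two equal arms joined at a common root) for $k=2$. The non-isometry condition then follows because the component of $C_n^k\setminus I_n^k$ carrying $T_k$ is not isometric to any component of $C_n^l\setminus I_n^l$. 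The distance conditions follow from choosing decorations of diameter exactly $2^{-n}$ (with a slight shrinking, if needed, to meet the strict inequality).

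The main obstacle is the last bullet: for every $x\in C_n^k$ and every $l\ne k$, the set $B(x,2^{n-2})\cup I_n^k$ must admit an isometric embedding into $C_n^l$ sending $i_n^k(t)\mapsto i_n^l(t)$. For $x$ with $d(x,p_\star)\ge 2^{n-2}+2^{-n}$ the ball encounters only the common background decorations, and the canonical extension of the parametrization does the job. The delicate case is $x$ close to $p_\star$, where $B(x,2^{n-2})$ meets the distinguishing decoration $T_k$ while $C_n^l$ carries a different $T_l$ at the same position. To reconcile this with the non-isometry of components, I will take $T_0,T_1,T_2$ as three distinct subtrees of a common ``universal'' tree $U$ of diameter $2^{-n}$, and attach at $p_\star$ in each $C_n^k$ a decoration containing a full copy of $U$ together with a small $k$-dependent ``marker'' placed inside the $T_k$-subtree. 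The presence of the full $U$ in every $C_n^k$ ensures that any piece of the decoration at $p_\star$ seen through a ball of radius $2^{n-2}$ is isometrically realised in $C_n^l$, while the position of the marker distinguishes the three spaces globally and yields a component of $C_n^k\setminus I_n^k$ not isometric to any of $C_n^l\setminus I_n^l$.

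With this construction, the first four bullets reduce to routine checks on the tree structure: contractibility, geodesicity, compactness, the distance bounds and the ``reaches $2^{-n}$'' condition are all automatic. The isometry group statement reduces to verifying that the background together with the marker inside $U$ kills every putative non-trivial isometry except, for $k=2$, the single arm-swap of the $Y$-subtree $T_2$ inside $U$. The whole proof therefore comes down to a finite combinatorial verification on the concrete choice of $U$, $T_0,T_1,T_2$, the markers, and the set of background sites $\mathcal{P}$.
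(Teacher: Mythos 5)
Your construction cannot satisfy the third and fifth bullets simultaneously, and the obstruction is structural rather than a matter of choosing the trees cleverly. All of your decorations are attached to the segment at a single root and, by the fourth bullet, every point of a decoration is at distance less than $2^{-n}$ from its root; the ball in the fifth bullet has radius $2^{n-2}\geq 2^{-n}$, so taking $x$ to be a root $p=i_n^k(t_0)$, the set $B(x,2^{n-2})$ contains the \emph{entire} decoration $D$ attached there, marker included. For $y\in D$ one has $d(y,i_n^k(t))=d(y,p)+|t-t_0|$ for all $t$, since every path from $y$ to the segment passes through $p$. The required isometry sends $i_n^k(t)$ to $i_n^l(t)$, so the image $y'$ must satisfy $d(y',i_n^l(t))=d(y,p)+|t-t_0|$ for all $t$, which forces $y'$ to lie in the decoration of $C_n^l$ attached at the \emph{same} site $i_n^l(t_0)$, at the same distance from that root. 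Hence the whole decoration of $C_n^k$ at each site embeds isometrically, root to root, into the decoration of $C_n^l$ at that site, and symmetrically with $k$ and $l$ exchanged; for compact spaces, mutual isometric embedding implies isometry. Consequently every component of $C_n^k\setminus I_n^k$ is isometric to some component of $C_n^l\setminus I_n^l$, contradicting the third bullet, which demands that \emph{no} component of $C_n^k\setminus I_n^k$ be isometric to \emph{any} component of $C_n^l\setminus I_n^l$. (Your ``common background'' decorations, being literally identical in all three spaces, already violate that bullet on their own, and the ``universal tree plus marker'' device does not help because the marker sits inside the ball as well.)

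The paper escapes this by making the $k$-dependent information non-local at scale $2^{n-2}$. Each $C_n^k$ is built from six rectangles $[0,2^n]\times[0,2^{-n}]$, all glued to the segment along their long bottom edge, with two small balls removed near the left ends of the first and third rectangles (this rigidifies the space and labels the sheets) and with left-hand and right-hand short edges glued in pairs, the pairing of the right-hand edges being the datum that depends on $k$. A ball of radius $2^{n-2}<\tfrac{1}{2}\left(2^{n}-2^{-n}-3^{-n}\right)$ cannot simultaneously see a puncture and a right-hand edge, so it cannot detect which pairing was used, while globally the pairing makes the components of $C_n^k\setminus I_n^k$ (which have diameter comparable to $2^n$) pairwise non-isometric across $k\neq l$. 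Any repair of your argument would have to similarly spread the distinguishing feature over a distance greater than $2\cdot 2^{n-2}$, which is incompatible with decorations of diameter $2^{-n}$ attached at points of the segment.
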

\begin{proof}
We start by constructing, for each integer $n \geq 1$, and each pair partition $\pi$ of $\{1,2,3,4,5,6\}$, a metric space $C_n^\pi$ as follows. We start from $6$ rectangles $[0,2^n] \times [0,2^{-n}]$, of length $2^n$ and height $2^{-n}$. We remove from the first and the third rectangles a ball of radius $3^{-n}$ and $4^{-n}$ respectively around the point $(2^{-n},2^{-n})$. We glue all the rectangles along the long edge $[0,2^n] \times\{0\}$. We also glue together the first and the second rectangles along the left segment $\{0\} \times [0,2^{-n}]$. We do the same for the third and fourth rectangles, and for the fifth and sixth rectangles. Finally for each class $\{i,j\}$ in the partition $\pi$, we glue together to right segments ${2^n}\times [0,2^{-n}]$ of the $i$-th and the $j$-th rectangle. The resulting space is $C_n^\pi$, that we equip with the unique geodesic metric that coincides with the euclidean metric on each (punctured) rectangle. See Figure \ref{picture=construction_Cn}.

\begin{figure}[!ht]
  \center
  \includegraphics{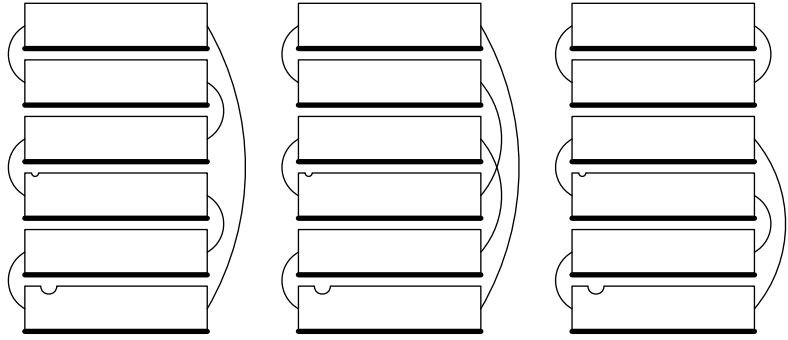}
  \caption{The spaces $C_n^0$ (left) and $C_n^1$ (middle) and $C_n^2$ (right), obtained by identifying the bottom side of all rectangles, and identifying each pair of vertical sides linked by an arc.}\label{picture=construction_Cn}
\end{figure}
Then one defines $C_n^0$ as $C_n^\pi$ for $\pi = \{\{1,6\},\{2,3\},\{4,5\}\}$, $C_n^1$ as $C_n^\pi$ for $\pi = \{\{1,6\},\{2,4\},\{3,5\}\}$ and $C_n^2$ as $C_n^\pi$ for $\pi = \{\{1,4\},\{2,3\},\{5,6\}\}$. By construction the exchange of the fifth and sixth rectangles gives an isometry of $C_n^2$. There is no difficulty in checking that there are no other non-trivial isometries, and that $C_n^0$ and $C_n^1$ have trivial isometry groups. The reason is that such an isometry must preserve the common long side of all the rectangles, and also the two small balls that have been removed, and hence must be the identity on the first and third rectangles. The rest of the properties are easy to check, with $I_n^k$ the identified edges $[0,2^n]\times \{0\}$ of the rectangles (the edge in bold in Figure \ref{picture=construction_Cn}). We only give a brief justification for the last one: a ball of radius $R<\frac{2^{n}-2^{-n} - 3^{-n}}{2}$ around a point in $C_n^\pi$ cannot simultaneously see one of the small balls that have been removed and a right side of a rectangle. The last point follows from the inequality $\frac{2^{n}-2^{-n} - 3^{-n}}{2} > 2^{n-2}$.
\end{proof}

Given the Lemma, we construct the space $X$ as follows. We start from a real line $\R$, and for each integer $n \geq 1$ and $m \in \Z$ we glue a copy of $C_n^0$ to $\R$ by identifying the segment $[m-2^{n-1},m+2^{n-1}]$ with $i_n^0([0,2^n])$ (through $t \mapsto i_n^0(t-m+2^{n-1})$. We equip $X$ with the unique euclidean metric that coincide with the metric on each copy of $C_n^0$. The properties (i) (ii) and (iii) are easy to verify from Lemma \ref{lem:Cn}, once we realize that we can recover $\R$ as the unique biinfinite geodesic in $X$.

Now for an arbitrary function $\sigma \colon \N \times \Z \to \{0,1,2\}$ we can modify the definition of $X$ by gluing to $[m-2^{n-1},m+2^{n-1}]$ a copy of $C_n^{\sigma(n,m)}$, to get a space $Y_\sigma$. Then the isometry group of $Y_\sigma$ is the semidirect product of $\prod_{m \in \Z} (\prod_{n, \sigma(m,n)=2} \Z/2\Z))$ by the subgroup of $\Z$ consisting of the elements $k$ satisfying $\sigma(m+k,n) =\sigma(m,n)$ for all $m,n$. Also $Y_R$ is $R$-locally $X$ if $\sigma(m,n)=0$ for all $(m,n)$ such that $2^{n-2}\leq R$. It is straightfoward that, taking appropriate choices for $\sigma$, we can find a continuum of non isometric metric spaces satisfying (iv) (respectively (v), respectively (vi))).

\appendix

\section{Uncountable second cohomology group $H^2(H,\Z/2\Z)$, by Jean-Claude Sikorav}\label{sec:appendix}

\subsection{Some reminders about group homology and cohomology (cf \cite{Br})}
Let $G$ be any group, and let $\F$ be a field. For a set $A$, denote by $\F^{(A)}$ the vector space over $\F$ with basis $A$ (the finitely supported functions from $A$ to $\F$). By definition, the {\it second homology group} $H_2(G,\F)$, actually a vector space over $\F$, is
$\frac{\ker\partial_2}{ {\rm im}\ \partial_3}$
where $\partial_2:\F^{(G^2)}\to \F^{(G)}$ and  $\partial_3:\F^{(G^3)}\to \F^{(G^2)}$ are defined by
\[\left\{\begin{array}{rl} \partial_2(g_1,g_2)&=(g_2)-(g_1g_2)+(g_1)\\
 \partial_3(g_1,g_2,g_3)&=(g_2,g_3)-(g_1g_2,g_3)+(g_1,g_2g_3)-(g_1,g_2).\end{array}\right.\]
The {\it second cohomology group} $H^2(G,\F)$ is  $\frac{\ker\delta^3}{{\rm im}\ \delta^2}$, with $\delta^2:\F^{G}\to \F^{G^2}$ and $\delta^3:\F^{G^2}\to \F^{G^3}$ the dual maps
\[\left\{ \begin{array}{rl} \delta^2c(g_1,g_2)&=c(g_2)-c(g_1g_2)+c(g_1)\\
\delta^3c(g_1,g_2,g_3)&=c(g_2,g_3)-c(g_1g_2,g_3)+c(g_1,g_2g_3)-c(g_1,g_2).\end{array}\right.\]
This implies that $H^2(G,\F)$ is isomorphic to the dual vector space $H_2(G,\F)^*$. 

\begin{cor} If $H_2(G,\F)$ is infinite-dimensional, then $H^2(G,\F)$ is uncountable (in fact has cardinality at least that of the continuum).
  \end{cor}

There is a topological interpretation which enables one to ``compute'' $H_2(G,\F)$ as follows. Let $X$ be a topological space whose fundamental group is $\pi_1(X)=G$. We make the following assumptions:
\begin{itemize}
  \item the homotopy groups $\pi_2(X)$ and $\pi_3(X)$ vanish; this is true in particular if $X$ is {\it aspherical} ie the universal covering $\widetilde X$ is contractible
\item $X$ is a cell complex (or CW-complex) ie $X=\displaystyle\bigcup_{n=0}^\infty X^{(n)}$, where $X^{(0)}$ is discrete and $X^{(n)}$ is obtained from $X^{(n-1)}$ by gluing a family of $n$-disks $D^n_i$, $i\in I_n$ along their boundaries, using some continuous attaching maps $f_i=\partial D^n_i\to X^{(n-1)}$: a point $x\in \partial D^n_i$ is identified with $f_i(x)\in X^{(n-1)}$.
The image in $X$ of a disk $D^n_i$ is called an {\it $n$-cell}. 
\end{itemize}

Then $\Gamma_n:=H_n(X^{(n)},X^{(n-1)};\F)$ has a basis $(e^n_i)_{i\in I_n}$ which can be identified with the $n$-cells, so that $\Gamma_n\approx \F^{(I_n)}$. And in the definition of $H_2(G,\F)$ one can replace $\F^{(G^n)}$, $n\le3$, by $\Gamma_n$,
the map $\partial_n$ being the connecting map in the homology sequence of the triple $(X^{(n)},X^{(n-1)},X^{(n-2)})$. 

Moreover, if $e^1_i$ is a $1$-cell with vertices $v_i^-$, $v_i^+$ ($0$-cells) and $x_i\in G$ is the associated generator of $G$, one has
$\partial_1(e^1_i)=v_i^+-v_i^-.$

\begin{rem} The group $G=\pi_1(X)$ has a presentation where the generators are the $1$-cells and the relations are associated the attaching maps of the $2$-cells. Conversely, every group $G$ with a presentation (finite or infinite) with ${\rm card}(I)$ generators and ${\rm Card}(J)$ relations is the fundamental group of an aspherical cell complex $X$, with the $2$-skeleton $X^{(2)}$ being associated to the presentation. Thus $X$ has one $0$-cell,  ${\rm card}(I)$ $1$-cells and ${\rm Card}(J)$ $2$-cells. If the number of relations is a finite number $q$, this implies
\[\dim H_2(G;\F)=\dim \frac{\ker\partial_2}{{\rm im}\ \partial_3}\le q<\infty.\]
Conversely, if $H_2(G;\F)$ is infinite-dimensional, there is no presentation of $G$ with a finite number of relations.
\end{rem}

\subsection{A criterion for uncountable $H^2$}

\begin{defn}\label{defn:finite_type} We say that {\it $G$ is of type $(p,q,r)$} if $G$ is the fundamental group of a  cell complex $X$ with $\pi_2(X)=\pi_3(X)=0$, and if $X$ has one $0$-cell, $p$ $1$-cells, $q$ $2$-cells and $r$ $3$-cells. This means that $G$ has a presentation with $p$ generators and $q$ relations, and at most $r$ ``relations between the relations''.
\end{defn}

\begin{prop} Let $u:G\to\Z$ be a nonzero group homomorphism. We assume that $G$ is of type $(p,q,r)$ with $q\ge p+r$.
Denote $\F=\Z/2\Z$ the field with two elements. Then 
\begin{enumerate}[(i)]
\item $H_2(\ker u;\F)$ is infinite dimensional over $\F$.
\medskip
\item $H^2(\ker u;\F)$ is uncountable.
\medskip
\item $\ker u$ is not finitely presented.
\end{enumerate}
\end{prop}

\begin{proof} We have seen that (i) implies (ii). By the remark, (i) implies (iii). Thus it suffices to prove (i).

  Let $X$ be as in Definition \ref{defn:finite_type}. Let $\widetilde X$ be the universal covering of $X$, which admits a free action such that the covering map $\widetilde X\to X$ is identified with $\widetilde X\to G\backslash\widetilde X$. Then $\widehat X:=\ker u\backslash\widetilde X$ is a covering of $X$ such that $\pi_1(\widehat X)=\ker u$. This covering is Galois with group ${\rm Aut}(\widehat X|X)= G/\ker u\approx\Z$. Assuming without loss of generality that $u$ is onto, ${\rm Aut}(\widehat X|X)$ is generated by an element $t=[g]$ such that $u(g)=1$. Then for every $g\in G$ its image in ${\rm Aut}(\widehat X|X)$ is $[g]=t^{u(g)}$.

  Each $n$-cell of $X$ lifts to a $\Z$-orbit of $n$-cells in $\widehat X$, thus
\[\Gamma_n=H_n(\widehat X^{(n)},\widehat X^{(n-1)};\F)\approx\F^{(\Z\times I_n)}\approx(\F[\Z])^{(I_n)}.\]
Moreover, the $\Z$-action on $\widehat X$ makes $\Gamma_n$ into a module over the group ring $\F[\Z]=\F[t,t^{-1}]$, and 
the isomorphism $\Gamma_n\approx\F[t,t^{-1}]^{(I_n)}$ is true not only as vector spaces over $\F$, but also as modules over $\F[t,t^{-1}]$. In particular
we can identify
\[\Gamma_3=(\F[t,t^{-1}])^r\ , \ \Gamma_2=(\F[t,t^{-1}])^q\ , \ \Gamma_1=(\F[t,t^{-1}])^p\ , \ \Gamma_0=\F[t,t^{-1}].\]
Also, the maps $\partial_n$ are $\F[t,t^{-1}]$-linear. Thus if we choose the $1$-cells in $\widehat X$ which form the basis of $\Gamma_1$ over $\F[t,t^{-1}]$ to start from the $0$-cell which is the basis of $\Gamma_0$, the property $\partial e^1_i=v_i^+-v_i^-$ implies
 \[\partial_1(\lambda_1,\cdots,\lambda_p)=\sum_{i=1}^p\lambda_i(t^{u(x_i)}-1)\]
where $x_1,\cdots,x_p$ are the generators of $G$ associated to the $1$-cells.

Denote $R=\F[t,t^{-1}]$, which is a principal ideal domain. We have thus a sequence of $R$-linear maps
\[R^r\xrightarrow{\partial_3} {}  R^q\xrightarrow{\partial_2} {}R^p\xrightarrow{\partial_1} {} R,\]
with $\partial_i\partial_{i+1}=0$, and $H_2(\ker u;\F)\approx\displaystyle{\frac{\ker\partial_2}{{\rm im}\ \partial_3}}$. Moreover, since $u(x_i)\ne0$ for some $i$, we have $\partial_1\ne0$. Since $R$ is a principal ideal domain, we have $\ker \partial_2 \approx R^k$, ${\rm im}\ \partial_3 \approx R^\ell$ for some $k,\ell$, and $\displaystyle{\frac{\ker \partial_2}{{\rm im}\ \partial_3}}\approx R^{k-\ell}\oplus T$ where $T$ is a torsion $R$-module. 
\smallskip
Since $R\approx\F^{(\Z)}$ as a $\F$-vector space, to finish the proof it suffices to show that $k-\ell>0$. For this, 
let $F=\F(t)$ be the fraction field of $R$. Consider the induced sequence of $F$-linear maps
\[F^r\xrightarrow{u_3} {}  F^q\xrightarrow{u_2} F^p\xrightarrow{u_1} F.\]
We have $u_iu_{i+1}=0$ and $u_1\ne 0$ thus $u_1$ is onto. Moreover, $\ker u_2\approx F^k$ and ${\rm im}\ u_3\approx F^\ell$, thus
\begin{eqnarray*}  k-\ell &=&\dim{\frac{\ker u_2}{{\rm im}\ u_3}}\\
&=&\dim{\frac{F^q}{{\rm im}\ u_3}}-\dim{\frac{F^q}{\ker u_2}}\\
&\ge&(q-r)-\dim{\rm im}\ u_2.\end{eqnarray*}
Finally, ${\rm im}\ u_2\subset\ker u_1$, which is of dimension $p-1$ since $u_1$ is onto. Thus
\[k-\ell\ge(q-r)-(p-1),\]
which is $>0$ by the hypothesis $q\ge p+r$, as required.
\end{proof}

\subsection{Examples with $\ker u$ finitely generated}
We now give explicit examples of finitely presented groups containing a finitely generated normal subgroup with uncountable second cohomology group with values in $\Z/2\Z$.
We use the following classical result : if $G=G_1\times G_2$  where $G_1$ and $G_2$ are finitely generated and $u:G\to \Z$ is nonzero on each factor, then $\ker u$ is finitely generated. 
It suffices to prove it for the product of two free groups $F(x_1,\cdots,x_n)\times F(y_1,\cdots,y_m)$: one reduces to the case when all generators are sent to $1$, then $\ker u$ is generated by 
\[(x_i,y_j)\ , \ 1\le i\le n, 1\le j\le m\ , \ (x_ix_1^{-1},1), 2\le i\le n\ , \ (1,y_jy_1^{-1}), 2\le j\le m.\]

1) Assume that $G_i$ is free with $p_i$ generators for some $p_i\ge2$. Then there exists a morphism $u:G\to\Z$ which is nonzero on each factor. Furthermore, we can take $X=X_1\times X_2$ where $X_i$ is a bouquet (or rose) of $p_i$ circles. Then $\widetilde X_i$ is a tree thus
$X_i$ associated to the presentation is aspherical, thus $X=X_1\times X_2$
is also aspherical. Thus $G$ is of type $(p,q,r)$ with
$$p=p_1+p_2\ , \ q=p_1p_2\ , \ r=0$$
Thus
$$q-(p+r)=p_1p_2-(p_1+p_2)=(p_1-1)(p_2-1)-1\ge0.$$

2) Assume that $G_i$ has a presentation with $p_i$ generators, $p_1\ge4,p_2\ge3$, and a unique relation which is primitive, for instance a surface group of genus $\ge2$. This implies first that there exists a morphism $u:G\to\Z$ which is nonzero on each factor. Furthermore, 
by \cite[Lemma p 382]{Cockcroft}, the $2$-complex $X_i$ associated to the presentation is aspherical, thus $X=X_1\times X_2$
is also aspherical. Thus $G$ is of type $(p,q,r)$ with
$$p=r=p_1+p_2\ , \ q=p_1p_2+2.$$
Thus
$$q-(p+r)=p_1p_2+2-2(p_1+p_2)=(p_1-2)(p_2-2)-2\ge0.$$
(We could also take the product of a free group with $p_1\ge2$ generators and a group with $p_2\ge3$ generators and one relation which is primitive; then $p=p_1+p_2,q=p_1p_2+1,r=p_1$ thus $q-(p+r)=(p_1-1)(p_2-2)-2$.)

\end{document}